\begin{document}
\title[Invariant subspaces for surface groups 
acting on $\typeA_2$-buildings]%
{Invariant subspaces for some surface groups 
acting on $\typeA_2$-Euclidean buildings.}

\author{Anne Parreau}

\address{
Université Grenoble I et CNRS, Institut Fourier\\ 
BP 74, 38402 St-Martin-d'Hères cedex, France.}

\email{Anne.Parreau@ujf-grenoble.fr}

\begin{abstract}

  This paper deals with
  %
  non-Archimedean representations of punctured surface groups in  $\PGL_3$, 
  associated actions on Euclidean buildings (of type $\typeA_2$), 
  and  degenerations of real convex projective structures on surfaces.
  %
  %
  The main result is that,
  under good conditions on Fock-Goncharov generalized shear parameters,
  non-Archimedean representations acting on the Euclidean building
  preserve a cocompact weakly convex subspace, 
  which is part flat surface and part tree.
  %
%
  %
%
  %
  %
  In particular the eigenvalue and length(s) spectra
  are given by an explicit finite {\em $\typeA_2$-complex}. 
  %
  %
  %
  %
  We use this result to describe 
  degenerations 
  of real convex projective structures on surfaces
  for an open cone of parameters.
  %
  The main tool is a geometric interpretation of Fock-Goncharov 
  parametrization in  $\typeA_2$-buildings.
\end{abstract}

\maketitle

\section*{Introduction}

One motivation for 
the study of actions of surface groups on
nondiscrete Euclidean $\typeA_2$-buildings 
is that,
in the same way that
degenerations of hyperbolic structures on surfaces
give rise to actions of the surface group on real trees 
(see Bestvina \cite{Bestvina88}, Paulin \cite{Paulin88}),
degenerations  of representations in $\SL_3(\RR)$
give rise to actions on
nondiscrete Euclidean $\typeA_2$-buildings
(see for example \cite{PauDg}, 
\cite{KlLe97}, 
\cite{ParComp}).
More specifically, 
in \cite{ParComp} we constructed 
a compactification of the space $\SPC(\Sf)$ 
of convex real projective structures 
on a closed surface $\Sf$, 
%
whose boundary points are marked length spectra 
of actions of $\Ga=\pi_1(\Sf)$ 
on nondiscrete Euclidean $\typeA_2$-buildings.
These actions come from representations of $\Ga$ in $\SL_3(\KK)$ 
for some ultrametric valued fields $\KK$.
 Degenerations of convex projective structures,
 or more generally of Hitchin representations,
 have been studied by numerous 
people, including
J.~Loftin \cite{Loftin07}, 
D.~Cooper, K.~Delp, D.~Long and M.~Thistle\-thwaite (forthcoming work),
D.~Alessandrini \cite{Alessandrini08}, 
I.~Le \cite{Le12},
T.~Zhang \cite{Zhang13},
B.~Collier, Q.~Li \cite{CoLi14}.
Given an action of a group $\Ga$ on a Euclidean building $\Es$, 
a natural question, in the spirit of minimal invariant subtrees for
actions on trees and convex cores for actions on hyperbolic space,
is whether it is possible to find a nice invariant convex subset 
$\Y \subset \Es$, for example cocompact or a minimal subbuilding... 
One of the motivations is that the length spectrum 
for instance would then be  recoverable from $\Y$ alone.
But convexity is a quite rigid
property in higher rank
(see for instance \cite{Quint05}, \cite{KlLe06}).
 We introduce here a more flexible 
notion of {\em weak convexity} for subsets $\Y$ of Euclidean buildings $\Es$,
that we call {\em $\Cc$-convexity},
 such that the length spectrum is still recoverable 
- in a more indirect way - from $\Y$.

In the case where 
$\Sf$ is a compact oriented surface with  nonempty boundary,
and $\KK$ any ultrametric valuated field,
for a large family of representations $\rho:\Ga\to \PGL_3(\KK)$,
we construct explicitly 
a simple, weakly convex, invariant subcomplex $\Y$ 
in the associated Euclidean building $\Es$,
on which $\Ga$ acts freely properly cocompactly. 
The subcomplex $\Y$ which is piecewise a flat surface or a tree.
We introduce also the notion of {\em $\typeA_2$-surface},  
and more generally of {\em $(\Aa,\W)$-complexes}, 
that is simplicial complexes
modelled on a finite reflection group $(\Aa,\W)$.
Natural examples are subcomplexes 
of Euclidean buildings with model flat $(\Aa,\W)$.
A {\em $\typeA_2$-structure} 
on a surface $\Sf$
    is a $(\Aa,\W)$-structure with singularities, 
for the finite reflection group $(\AA,\W)$ of type $\typeA_2$,
(corresponding to $\PGL_3$) (see section \ref{sss- tTsurf}).
Such structures are analogous to translation and
half-translation surfaces (and will be called {\em \tTsurfaces{}}), 
and are closely related to cubic holomorphic differentials on the
surface 
(for which we refer to
Labourie \cite{Lab07},
Loftin \cite{Loftin01},
Benoist-Hulin \cite{BeHu14},
Dumas-Wolf \cite{DuWo14}%
).
As a consequence of the previous result, 
we construct a family of explicit finite $\typeA_2$-complexes $\CEsgZ$,
homotopy equivalent to 
$\Sf$, 
parametrized by a $8\abs{\chi(\Sf)}$-dimensional real parameter $\gFGp$,
which encodes the absolute values of eigenvalues 
of the representations $\rho$ above ($\CEsgZ \simeq \Y/\rho(\Ga)$).
The main tool is the Fock-Goncharov parametrization of representations
$\rho:\Ga\to \PGL_3(\KK)$ (generalized shear coordinates).

We now state the definitions and results in more details.

 The model flat (of type $\typeA_2$)
is the $2$-dimensional real vector space 
$$\Aa=\{\vA=(\vA_1,\vA_2,\vA_3)\in\RR^3/\ \sum_\indN \vA_\indN=0\}$$
endowed
with the action of the Weyl group $\W=\Sym_3$ acting on $\Aa$ by
permutation of coordinates (finite reflection group).
The model Weyl chamber 
is the cone 
$$\Cc=\{\vA\in\Aa/\ \vA_1 > \vA_2 >\vA_3\}$$
 in $\Aa$.
Its closure $\Cb$ is a strict fundamental domain for the action of
 $\W$ on $\Aa$.
A vector $\vA\in\Aa$ is {\em singular} if it belongs to one of the
three {\em singular lines} $\vA_\indN=\vA_\indNp$. 
The two distinct types of singular directions (rays) in
$\Aa$, corresponding to the orbits under $\W$ of two rays 
$\vA_1 > \vA_2=\vA_3$  and $\vA_1=\vA_2> \vA_3$ 
bounding $\Cc$, which will respectively be called 
{\em  type  $\stypep$} and {\em  type $\stypeD$}.
In the figures (Figure \ref{fig-intro Aa} and the sequel), 
the type of singular directions will be represented by an arrow
$\triangleright$ indicating the induced orientation on singular lines
(towards the type  $\stypep$ extremity).
We will 
use as canonical coordinates on $\Aa$ the simple roots,  
i.e. the linear forms 
$\rac1(\vA) = \vA_1-\vA_2$ and 
$\rac2(\vA) = \vA_2-\vA_3$, 
hence we will identify $\vA\in\Aa$ with
$(\rac1(\vA), \rac2(\vA) )\in\RR\times\RR$
(see Figure \ref{fig-intro Aa}).
The $\W$-invariant Euclidean norm $\normeuc{\ }$ on $\Aa$ 
(unique up to rescaling) is normalized so that
the simple roots $\rac\indN$ measure the distance 
to the corresponding singular line $\rac\indN=0$.

\begin{figure}[h]
  \includegraphics[scale=0.8]{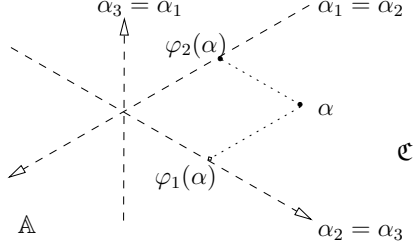} 

  \caption{Simple roots coordinates in the model flat $\Aa$.}
\label{fig-intro Aa}
\end{figure}

When $\Es$ is a (real) Euclidean building or a symmetric space
of type $\typeA_2$, i.e. with maximal flats isomorphic to $(\Aa,\W)$, 
the usual metric $\deuc:\Es\times\Es \to \RR_{\geq 0}$ 
(induced by the Euclidean norm $\normeuc{\ }$ on $\Aa$)
has a natural vector-valued
refinement,   
$$\Cd:\Es\times\Es \to \Cb$$
that we will call the {\em $\Cc$-distance}:
it is the canonical projection induced by the
 natural markings $\f:\Aa\to\Es$ of flats, whose
transition maps are in $\W$ up to translation.
The corresponding refinement of the usual (translation) length
({\em Euclidean length})
$$\elleuc(\g)=\{\deuc(\xE,\g\xE),\ \xE\in\Es\}$$
of an automorphism $\g$ of $\Es$
is the   {\em $\Cc$-length} $\ellC(\g)$ of $\g$.
It may be defined as the unique vector  
of minimal length in (the closure in $\Cb$
of) $\{\Cd(\xE,\g\xE),\ \xE\in\Es\}$, 
and we have 
$$\elleuc(\g)=\normeuc{\ellC(\g)}\;.$$
For $\g$ in $\SL_3(\KK)$ acting on its associated Euclidean building
 (for ultrametric $\KK$) or symmetric space (for $\KK=\RR$) it
corresponds to
$$\ellC(\g)=(\log\abs{\aK_\indN})_\indN$$ 
where the $\aK_\indN$ are the eigenvalues of  $\g$ (in nonincreasing order). 
The $\Cc$-length refines another notion of length 
of particular interest, 
the {\em Hilbert length}, which is 
the length of $\g$ for the Hilbert metric 
in the context of convex projective structures.
It may be defined by 
$$\ellH(\g)=\normH(\ellC(\g))$$
where $\normH$ is the {\em \hex-norm}  on $\Aa$ 
i.e. the $\W$-invariant norm
defined by $\normH(\vA)=\vA_1-\vA_3$ for $\vA$ in $\Cc$
(whose unit ball is the singular regular hexagon). 

We will here introduce the naturally associated notion of
{\em  $\Cc$-geodesics}, 
which are paths on which  the $\CC$-distance is additive.
Note that, unlike for the usual distance, 
$\Cc$-geodesics between two given points are not unique, 
and that usual 
geodesics are $\Cc$-geodesics, but the converse is not true.
The notion of weak convexity is then defined, by analogy with the
usual setting, as follows: 
we say that a subset $\Y \subset \Es$ is {\em $\Cc$-convex} if for any
two points $\xE,\yE$ in $\Y$, there exists a $\Cc$-geodesic from $\xE$ to
$\yE$ that is contained in $\Y$.

We now turn to the Fock-Goncharov parametrization 
of  representations  of the fundamental group
$\Ga$ of a compact oriented surface $\Sf$ with nonempty boundary.
More precisely,
following \cite{FoGoSPC}, 
we explain how to associate,
to an ideal triangulation $\T$ and  $8\chi(\Sf)$ parameters in
$\KK$ (one per triangle and two per edge),  
a representation $\rho:\Ga\to \PGL_3(\KK)$. 
This construction is in fact  valid any field $\KK$.  
It is based on projective geometry, 
through the action of $\PGL_3(\KK)$ on the projective plane $\PP(\KK^3)$. 
Denote by $\Bir(\aK_1,\aK_2,\aK_3,\aK_4)$ the cross ratio on $\PP(\KK^2)$,
with the convention $\Bir(\infty,-1,0,\aK)=\aK$.
Let $\MaxFlags(\PP)$ be the space of flags   
in the projective plane $\PP=\PP(\KK^3)$, that is the space of pairs
$(\p,\D)$, 
where $\p$ is a point and $\D$ a line of $\PP$, with $\p\in\D$.
%
%
Denote by  $\Farey{\Sf}$ the {\em Farey set} of the surface,
which may be defined as the  set of boundary components 
of the universal cover $\Sft$ of $\Sf$
(see section \ref{ss- prelim surf and ideal triang}), with the induced
cyclic order.
Let $\T$ be an ideal triangulation of $\Sf$. 
Denote by $\Tt$  the lift of $\T$ to the universal cover $\Sft$ of $\Sf$.
Shrinking boundary components of $\Sft$ to points, we may see $\Tt$ as
a triangulation of $\Sft$ with vertex set the Farey set $\Farey{\Sf}$.
Denote 
by   $\TrianglesT$ the set of  triangles of $\T$,
by $\orEdgesT$ the set of oriented edges of $\T$, 
which are finite sets 
of respective cardinality $2\abs{\chi(S)}$ and $6\abs{\chi(S)}$.
Fix a {\em FG-parameter} $\FGp=((\Z_\tau)_\tau,(\Ep_\e)_\e)$ 
in $(\KKol)^{\TrianglesT} \times (\KKo)^{\orEdgesT}$.
There exists then a unique (up to $\PGL(\KK^3)$ action)
associated {\em   flag map} 
$\FdevZ:\Farey{\Sf} \to \MaxFlags(\PP)$,
$\i \mapsto (\p_\i,\D_\i)$,
 equivariant with respect to a unique 
representation  $\rhoZ:\Ga\to \PGL(\KK^3)$, 
such that 
the flag map $\FdevZ$ sends  
each triangle $\taut=(\i,\j,\k)$  of $\Tt$ 
to a generic triple of flags of   triple ratio
$$\Bir(\D_\i, \p_\i \p_\j, \p_\i (\D_\j\cap \D_\k), \p_\i
\p_\k) = \Z_\tau$$
where $\tau$ is the triangle of $\T$ with lift $\taut$,
and for any two ajdacent triangles 
$(\i,\j,\k)$ and $(\k,\l,\i)$ of $\Tt$ 
with common edge $\et=(\k,\i)$ 
we have
$$\Bir
(\D_\i, \p_\i \p_\j, \p_\i \p_\k, \p_\i (\D_\k\cap \D_\l))
=\Ep_\e$$
where $\e$ is the oriented edge of $\T$ with lift $\et$,
and   $\i,\j,\k,\l$ in $\Farey{\Sf}$ are positively ordered. 
When $\KK=\RR$,
 the representations $\rhoZ$ with  positive FG-parameters
($\Z_\tau,\Ep_e\in \RR_{>0}$ for all $\tau,\e$) 
correspond to the holonomies of convex projective
structures on $\Sf$.
We now define the  {\em $\typeA_2$-complex $\CEsgZ$ associated with} 
a {\em \leftshifting{} geometric FG-parameter} $\gFGp$ in
$\RR^\TrianglesT \times \RR^\orEdgesT$.
Consider a {\em geometric FG-parameter}  $\gFGp=((\gZ_\tau)_\tau,(\gEp_\e)_\e)$ 
in $\RR^\TrianglesT \times \RR^\orEdgesT$.
We suppose that  $\gFGp$ is {\em \leftshifting} 
i.e. satisfies the following condition:
\begin{center}
(\HypLeftShift) 
  For each $\e\in\orEdgesT$,
with left and right 
  triangles $\tau$ and $\taup$, we have
$\gEp_\e     > \max\{- \gZmoins_\tau,- \gZplus_\taup\} $
\end{center}
where $\posp{\tRR}=\max(\tRR,0)$ and 
$\negp{\tRR}=\max(-\tRR,0)$ for $\tRR \in \RR$.
For each triangle $\tau$ of the triangulation $\T$,
pick a singular equilateral triangle $\cellCEsgZ^\tau$ 
in the model plane $\Aa$, 
with vertices $\vstA_1$, $\vstA_2$, $\vstA_3$,
and sides of $\Cc$-length
$\Cd(\vstA_1,\vstA_2)=(\gZplus_\tau,\gZmoins_\tau)$
in simple roots coordinates
(well-defined up to translations and action of $\W$),
see figure \ref{fig- intro-Aa-triangles}.

\begin{figure}[h]
  \includegraphics{figures/intro_Aa_triangles.fig}
  \caption{The singular triangle $\cellCEsgZ^\tau$ in $\Aa$.}
\label{fig- intro-Aa-triangles}
\end{figure}

When $\tau,\taup$ are adjacent along an edge $\e$ 
(oriented according to $\tau$), 
we connect the end
of the edge corresponding to $\e$ 
of the triangle $\cellCEsgZ^\tau$ 
to the beginning of the edge corresponding to $\e$ 
of the triangle $\cellCEsgZ^\taup$, 
by gluing  either
a segment $\cellCEsgZ^\e$ in $\Aa$ of $\Cc$-length  
$(\gEp_\eb,\gEp_\e)$,
 when $\gEp_\e,\gEp_\eb\geq 0$,
or  a flat strip $\cellCEsgZ^\e\subset \Aa$ 
such that $\cellCEsgZ^\e= [0, \gEp_\eb]\times [0, \gEp_\e]$ 
(in simple roots coordinates), 
when $\gEp_\e<0$ or $\gEp_\eb <0$,
as in figure \ref{fig-intro- gluings} 
(note that under hypothesis (\HypLeftShift) 
$\gEp_\e < 0$ implies that $\gEp_\eb \geq 0$).

\begin{figure}[h]
  \includegraphics[scale=0.75]{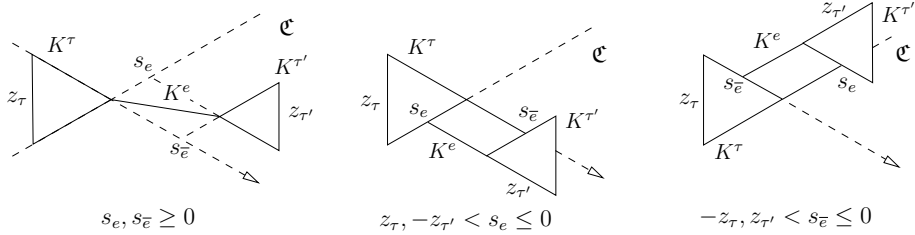}
  \caption{Gluings (local development in $\Aa$).}
  \label{fig-intro- gluings}
\end{figure}

The resulting finite $2$-dimensional  complex $\CEsgZ$ 
(see figure \ref{fig-intro- three A2-pants})
is 
a deformation retract of $\Sf$, 
and its fundamental group
has canonical identification with $\Ga=\pi_1(\Sf)$. 
The length metric on $\CEsgZ$
induced by the Euclidean $\W$-invariant metric on
$\Aa$ 
will be denoted by $\deuc$. 
Furthermore, the complex $\CEsgZ$ is endowed with 
a {\em $\typeA_2$-structure}
(charts in $\Aa$ with transition maps in $\W$).
Hence we may define the $\Cc$-length of piecewise affine paths in
$\CEsgZ$.
The $\Cc$-length 
$\ellC(\ga,\CEsgZ)$ of $\ga\in\Ga$
is then defined
as the $\Cc$-length 
of one (any) closed 
geodesic 
representing $\ga$.
We define the  $\Cc$-distance $\Cd$ on 
the universal cover $\CEstgZ$ of $\CEsgZ$
as the $\Cc$-length of the unique 
geodesic between two points. 
Note that, unlike in Euclidean buildings, in $\typeA_2$-complexes 
the $\Cc$-distance does not refine  the usual metric $\deuc$, 
in the sense
that the inequality
$\normeuc{\Cd(\xE,\yE)}\leq \deuc(\xE,\yE)$ may be strict.

There are several particular cases of special interest, 
providing a continuous transition from graphs to surfaces.
The geometric FG-parameters $\gFGp$ satisfying the condition 
\[
(\HypTree) 
\left\{
\begin{array}{l}
  \gZ_\tau = 0 \mbox{ for all  triangles } \tau \mbox{  of } \T\\ 
  \gEp_\e > 0  \mbox{ for all oriented edges }\e \mbox{  of } \T
\end{array}
\right.
\]
(which imply (\HypLeftShift)),
correspond to the case where $\CEsgZ$ is a graph 
(the $3$-valent ribbon graph dual to the ideal triangulation), 
endowed with a {\em $\Cc$-metric}.
Relaxing the hypotheses, 
 the condition
\begin{center}
(\HypTreeOfTriangles) $\gEp_\e \geq 0$ for all oriented edge $\e$ of $\T$   
\end{center}
means that all the $\cellCEsgZ^\e$ are segments 
so $\CEsgZ$ is obtained from the previous graph
by replacing vertices by triangles ({\em graph of triangles}).
At the opposite of the spectrum,
when 
\begin{center}
(\HypSf) $\gEp_\eb<0$ or $\gEp_\e<0$ for all oriented edge $\e$ of $\T$,   
\end{center}%
then $\CEsgZ$ is a \tT{} surface homeomorphic to $\Sf$.

\begin{figure}[h]
  \includegraphics[scale=0.5]{figures/A2-pantalon_intro.fig}
  \caption{Examples of $\typeA_2$-complex $\CEsgZ$ on a pair of pants,
corresponding to the conditions (\HypTree), (\HypTreeOfTriangles), and
(\HypSf) on the parameter $\gFGp$.}
  \label{fig-intro- three A2-pants}
\end{figure}

We now state the main result (see Theorem \ref{theo- Cgeod surface}).
We will need the following hypothesis:
A geometric  FG-parameter $\gFGp$ will be called {\em
\edgeseparating{}} if it satisfies the following condition.

\begin{center}
(\HypLeftShift) 
For each $\tau$ in  $\T$ and every pair of edges $\ei$,
$\eii$ of $\tau$, we have
$\left\{
\begin{array}{ll}
-\gEp_\ei-\gEp_\eii &<\gZmoins_\tau\\
-\gEp_\eib-\gEp_\eiib &<\gZplus_\tau
\end{array}
\right.
$\;.
\end{center}

\begin{theointro}
\label{theointro- Cgeod embbeded A2-complex}
Let $\FGp=((\Z_\tau)_\tau,(\Ep_\e)_\e)$ in 
$(\KKol)^\TrianglesT \times (\KKo)^\orEdgesT$, 
and 
denote by $\rho$ the representation $\rhoZ: \Ga\to \PGL_3(\KK)$ 
of FG-parameter $\FGp$.
Let $\gZ_\m=\log\abs{\Z_\m}$, $\gEp_\m=\log\abs{\Ep_\m}$
 and $\gZ=(\gZ_\m)_\m$, $\gEp=(\gEp_\m)_\m$.
Suppose that
\begin{mydescription}

\item[(\HypFlatTriangles)] For each triangle $\tau$ in  $\T$, 
we have $\abs{\Z_\tau+1}\geq 1$ ;

\item[(\HypH)] For each oriented edge $\e$  in  $\T$, 
we have 
$\abs{\Ep_\e+1}\geq 1$ ; 
\item[(\HypLeftShift)]  $\gFGp$ is \leftshifting{} ;
\item[(\HypEdgeSep)]   $\gFGp$ is \edgeseparating{} ;
\end{mydescription}
Let $\CEsgZ$ be the $\typeA_2$-complex 
of geometric FG-parameter $\gFGp$.
Then
there exists a $\rho$-equivariant map 
$$\Psi: \CEstgZ\to \Es$$ 
preserving  the $\Cc$-distance $\Cd$.
\end{theointro}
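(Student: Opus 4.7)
The plan is to build $\Psi$ cell by cell using the flag map $\Fdev_{\Z,\Ep}$, embedding each cell of $\CEstgZ$ isometrically into an apartment of $\Es$, and then to promote the local $\Cc$-isometry to global $\Cc$-distance preservation by two opposition checks at edges and triangle vertices, which are precisely encoded by (\HypLeftShift) and (\HypEdgeSep).

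\textbf{Construction on triangle cells.}
For each lift $\taut=(\i,\j,\k)$ of a triangle $\tau\in\T$, the flag map provides three flags $\Fdev_{\Z,\Ep}(\i),\Fdev_{\Z,\Ep}(\j),\Fdev_{\Z,\Ep}(\k)\in\MaxFlags(\PP)$ which are in generic position (since $\Z_\tau\in\KKol$), so in the Euclidean building they span a unique apartment $\App_\taut\subset\Es$. The geometric interpretation of the triple ratio developed earlier in the paper identifies, inside $\App_\taut$, a canonical singular triangle whose sides have $\Cc$-lengths exactly $(\gZplus_\tau,\gZmoins_\tau)$, matching $\cellCEsgZ^\tau$. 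Hypothesis (\HypFlatTriangles) $\abs{\Z_\tau+1}\geq 1$ is precisely the ``flat'' condition ensuring that this configuration sits in the single apartment $\App_\taut$ rather than breaking into a tripod; I define $\Psi$ on the corresponding lift of $\cellCEsgZ^\tau$ by this isometry. By $\rhoZ$-equivariance of $\Fdev_{\Z,\Ep}$, the resulting family of maps on triangle lifts is automatically $\rho$-equivariant.

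\textbf{Construction on edge pieces.}
For an oriented edge $\e$ of $\Tt$ with left triangle $\taut$ and right triangle $\taut'$, the parameter $\Ep_\e$ describes the relative position of $\App_\taut$ and $\App_{\taut'}$ along their common wall at infinity. Under (\HypH) $\abs{\Ep_\e+1}\geq 1$, this relative position is again ``flat'': either the two apartments share a half-apartment (when $\gEp_\e,\gEp_\eb\geq 0$), in which case a segment realizing $\cellCEsgZ^\e$ connects the two triangle images inside this common region; or $\App_\taut\cup\App_{\taut'}$ contains a common flat strip of widths $(\gEp_\eb,\gEp_\e)$ realizing $\cellCEsgZ^\e$. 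In either case I extend $\Psi$ to this lift of $\cellCEsgZ^\e$ by the corresponding isometric embedding into one (or the union of two) apartment(s).

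\textbf{Global $\Cc$-distance preservation — the main obstacle.}
By construction $\Psi$ preserves $\Cc$-distance on each cell. To conclude globally, for any $\xE,\yE\in\CEstgZ$ I must show that the image under $\Psi$ of the CAT(0)-geodesic $\gamma$ from $\xE$ to $\yE$ is a $\Cc$-geodesic in $\Es$. Since $\gamma$ concatenates $\Cc$-geodesic segments through successive triangle- and edge-cells, the additivity of $\Cc$-length reduces to a local opposition check at each singular transition of $\gamma$: (a) where $\gamma$ crosses an edge separating two cells, and (b) where $\gamma$ turns around a triangle vertex of $\CEstgZ$. The standard criterion is that the incoming and outgoing directions must lie in opposite Weyl sectors of the ambient apartment, so that $\Cc$-distance adds across the transition. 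Applying a wall retraction of $\Es$ onto the apartment $\App_\taut$ of an incident cell — which never increases $\Cc$-distance — reduces each check to a planar inequality comparing the direction of $\gamma$ to the singular directions of $\App_\taut$. A direct computation shows that transitions of type (a) are controlled precisely by $\gEp_\e>\max(-\gZmoins_\tau,-\gZplus_{\taut'})$, i.e.\ by (\HypLeftShift), while transitions of type (b) are controlled precisely by $-\gEp_{\ei}-\gEp_{\eii}<\gZmoins_\tau$ together with its dual, i.e.\ by (\HypEdgeSep). Composing these local checks along $\gamma$ yields $\Cd(\Psi(\xE),\Psi(\yE))=\Cd(\xE,\yE)$ and completes the proof.
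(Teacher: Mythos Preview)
Your outline has the right overall architecture---build $\Psi$ cell by cell, then upgrade local to global $\Cc$-preservation---but the assignment of hypotheses to steps is scrambled, and the global step glosses over the central difficulty.

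First, the roles of the four hypotheses. In the paper, (\HypFlatTriangles) and (\HypLeftShift) together suffice for the \emph{construction} and the \emph{local} $\Cc$-isometry: (\HypFlatTriangles) ensures each triple of flags determines a flat singular triangle $\Delta_\tau$ (Theorem~\ref{theo- triangle sing type triangle - local}), and (\HypLeftShift) is exactly what guarantees that adjacent $\Delta_\tau,\Delta_{\tau'}$ sit in a common flat (Lemma~\ref{lemm- two adjacent sing triangles lie in a flat}). Hypothesis (\HypH) plays no role in the edge construction; it appears only in the global argument, in one specific case where a singular segment of the geodesic runs through two adjacent triangle cells joined by a degenerate edge cell (Lemma~\ref{lemm- gEprime leq 0 implies that Fj Fl opp en x_i}). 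Hypothesis (\HypEdgeSep) is not an ``opposition at triangle vertices'' condition; its geometric content (Remark~\ref{rema- hyp (Edgeseparating)}) is that singular boundary segments cannot traverse a triangle cell and exit, which confines the possible singular pieces of a geodesic in $\CEstgZ$ to four explicit configurations.

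Second, and more seriously, the sentence ``composing these local checks along $\gamma$ yields $\Cd(\Psi(\xE),\Psi(\yE))=\Cd(\xE,\yE)$'' is precisely where the argument breaks. The paper emphasizes (see the remark following Proposition~\ref{prop- critere 3 points et facettes croissantes} and Figure~\ref{fig- local but not glob Cgeod}) that local $\Cc$-geodesicity does \emph{not} imply global $\Cc$-geodesicity in rank two: a piecewise affine path can be $\Cc$-opposite at every breakpoint and still fail globally. The actual criterion (Proposition~\ref{prop- critere Cgeod avec sing}) demands that at each breakpoint $\xE_n$ following a \emph{singular} segment $[\xE_{n-1},\xE_n]$, one checks that $\TangS_{\xE_n}\xE_{n+1}$ is $\Cc$-opposite not just to $\TangS_{\xE_n}\xE_{n-1}$ but to $\TangS_{\xE_n}\xE_{n-2}$. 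This is why the proof must first classify all singular segments of a geodesic in $\CEstgZ$ (using (\HypEdgeSep)) and then, case by case, verify this stronger two-step opposition---with (\HypH) needed in the fourth case. Your wall-retraction idea does not sidestep this: a retraction onto a single apartment cannot see the combinatorics of how $\Psi(\gamma)$ threads through several apartments, and there is no obvious way to compose retractions along the path without losing control of $\Cc$-length.
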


\begin{corointro}
Under the hypotheses of Theorem \ref{theointro- Cgeod embbeded
  A2-complex}, the following assertions holds.

\begin{enumerate}
\item The  $\Cc$-length spectra coincide, 
i.e. for all $\ga\in\Ga$
  \[ \ellC(\rho(\ga))=\ellC(\ga,\CEsgZ)\;.\]
In particular, the usual 
Euclidean and Hilbert length are given by
\begin{align*}
&\elleuc(\rho(\ga))=\normeuc{\ellC(\ga,\CEsgZ)}
,\\
\mbox{and } 
&\ellH(\rho(\ga))=\normH(\ellC(\ga,\CEsgZ)) 
\;.
\end{align*}

\item The map $\Psi$ is
bilipschitz.
In particular the representation $\rho$ is undistorted, 
i.e. for any fixed point $\xE$  in $\Es$, 
we have $\deuc(\xE,\rho(\ga)\xE) \simeq \norm{\ga}$
where $\norm{\ga}$ is the word length of $\ga$ in $\Ga$. 
\item The representation $\rho$ is faithfull and proper (hence discrete).
  \end{enumerate}

\end{corointro}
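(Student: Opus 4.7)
The overall strategy is to deduce all three parts from the $\Cc$-distance preserving, $\rho$-equivariant map $\Psi\colon \CEstgZ \to \Es$ produced by Theorem~\ref{theointro- Cgeod embbeded A2-complex}, together with standard properties of $\Cc$-distance in Euclidean buildings and the free cocompact action of $\Ga$ on $\CEstgZ$ (which holds since $\CEsgZ$ is a finite complex with fundamental group $\Ga$).

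For (i), I would pick a closed geodesic in $\CEsgZ$ representing $\ga$ and lift it to a bi-infinite geodesic $\tilde c \subset \CEstgZ$ on which $\ga$ acts by translation. Additivity of $\Cd$ along $\tilde c$ gives $\Cd(\tilde x, \ga \tilde x) = \ellC(\ga, \CEsgZ)$ for any $\tilde x \in \tilde c$. By $\Cc$-distance preservation and equivariance,
\[
\Cd(\Psi(\tilde x), \rho(\ga)\Psi(\tilde x)) = \Cd(\tilde x, \ga \tilde x) = \ellC(\ga, \CEsgZ).
\]
The image $\Psi(\tilde c)$ is a $\Cc$-geodesic in $\Es$ translated by $\rho(\ga)$, so it lies on the translation axis (min set) of $\rho(\ga)$, where the displacement equals $\ellC(\rho(\ga))$. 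This yields $\ellC(\rho(\ga)) = \ellC(\ga, \CEsgZ)$, and the Euclidean and Hilbert length formulas follow immediately from the definitions $\elleuc = \normeuc{\ellC}$ and $\ellH = \normH(\ellC)$.

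For (ii), in the building we have $\deuc = \normeuc{\Cd}$, so $\Cc$-distance preservation yields
\[
\deuc(\Psi(\tilde x), \Psi(\tilde y)) = \normeuc{\Cd(\tilde x, \tilde y)} \leq \deuc(\tilde x, \tilde y),
\]
i.e.\ $\Psi$ is $1$-Lipschitz. The reverse Lipschitz bound is the main obstacle: in the $\typeA_2$-complex $\CEstgZ$ the inequality $\normeuc{\Cd} \leq \deuc$ may be strict, because a Euclidean geodesic joining two points may deviate from a flat direction when crossing singular edges. To control the ratio $\deuc/\normeuc{\Cd}$ globally, I would exploit the finite $\typeA_2$-cellular structure of $\CEsgZ$: on each closed cell the two quantities are comparable by a uniform constant depending only on the cellular structure, and cocompactness of the $\Ga$-action on $\CEstgZ$, combined with $\Ga$-invariance of both metrics, upgrades the local bounds into a uniform global bilipschitz constant. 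The \textquotedblleft undistorted\textquotedblright{} consequence then follows from the \v{S}varc--Milnor lemma applied to the free cocompact isometric action of $\Ga$ on $\CEstgZ$.

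For (iii), the deck action of $\Ga$ on the universal cover $\CEstgZ$ is free. Since $\Cd(\tilde x, \tilde y) = 0$ if and only if $\tilde x = \tilde y$, $\Cc$-distance preservation forces $\Psi$ to be injective; if $\rho(\ga) = \id$ then $\Psi(\ga \tilde x) = \rho(\ga)\Psi(\tilde x) = \Psi(\tilde x)$ yields $\ga \tilde x = \tilde x$, so $\ga = 1$, proving faithfulness. Properness of $\rho$ is immediate from (ii): for any bounded $B \subset \Es$, the set $\{\ga \in \Ga : \rho(\ga) B \cap B \neq \emptyset\}$ has uniformly bounded word length by the undistortion bound, and is therefore finite since $\Ga$ is finitely generated; discreteness of $\rho(\Ga)$ is then automatic.
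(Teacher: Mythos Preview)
The paper states these conclusions only as an ``In particular'' after Theorem~\ref{theo- Cgeod surface} and gives no detailed argument, so your write-up is already more explicit than what appears there. Parts (i) and (iii) are essentially correct; in (i) the claim that the image ``lies on the translation axis'' would be cleaner if you noted that $\Cd(\Psi(\tilde x),\rho(\ga)^n\Psi(\tilde x))=n\,\ellC(\ga,\CEsgZ)$ by additivity along the lifted axis and then invoked the stable-length characterisation of $\ellC$ from \cite{ParComp}, rather than appealing to a ``min set'' whose relation to $\Cc$-geodesics you have not established.

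There is, however, a genuine gap in part (ii), in the reverse Lipschitz bound. Your assertion that ``cocompactness upgrades the local bounds into a uniform global bilipschitz constant'' is not justified and is not automatic here: the function $(\xCE,\yCE)\mapsto\normeuc{\Cd(\xCE,\yCE)}$ on $\CEstgZ$ is \emph{not} a length metric (cf.\ Remark~\ref{rem- Cd do not refine deuc in C-complexes}), so comparability with $\deuc$ on each cell says nothing about the global ratio along long geodesics crossing many cells. The correct argument is elementary but rests on a specific feature of type $\typeA_2$: the hex-norm $\normH$ is \emph{linear} on $\Cb$ (it equals $-\rac3$ there), hence additive on sums of vectors in $\Cb$. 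Subdividing the $\deuc$-geodesic from $\xCE$ to $\yCE$ into affine pieces $[\xCE_\indPath,\xCE_{\indPath+1}]$ each contained in a single cell, one gets
\begin{align*}
\deuc(\xCE,\yCE)
&=\sum_\indPath \normeuc{\Cd(\xCE_\indPath,\xCE_{\indPath+1})}
\;\leq\; C_1\sum_\indPath \normH\bigl(\Cd(\xCE_\indPath,\xCE_{\indPath+1})\bigr)\\
&= C_1\,\normH\bigl(\Cd(\xCE,\yCE)\bigr)
\;\leq\; C_2\,\normeuc{\Cd(\xCE,\yCE)}
\;=\; C_2\,\deuc\bigl(\Psi(\xCE),\Psi(\yCE)\bigr),
\end{align*}
where $C_1,C_2$ come only from the equivalence of the two $\W$-invariant norms on $\Aa$. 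This yields the reverse Lipschitz bound with a constant depending only on the type; no cocompactness argument is needed.
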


\begin{remas*}
  \begin{enumerate}

\item The image $\Y$ of $\Psi$ is a closed $\Cc$-convex subset of
  $\Es$ preserved by $\rho$,
  and $\Ga$ acts freely discontisnuously cocompactly on $\Y$.

\item 
  The $\Cc$-length spectrum of $\rhoZ$ depends only on
$\gZ=\log\abs{\Z}$, $\gEp=\log\abs{\Ep}$ 
(in particular it does not determine the
  representation up to conjugacy).

\item Note that, for positive representations (that is, with positive
  FG-parameters $\gZ_\tau,\gEp_\e>0$)  
in ordered fields  $\KK$, 
the hypothesis (\HypFlatTriangles) and (\HypH) are always satisfied.

\item
Note that (\HypLeftShift) and (\HypEdgeSep) are a finite system of
strict linear inequations in $\gZmoins_\tau,\gZplus_\taup$, 
in particular 
the subset $\ConeLS$ of \leftshifting{} and \edgeseparating{}
$\gFGp$ is a finite union 
of open convex polyhedral cones in $\RR^\TrianglesT \times \RR^\orEdgesT$.
It contains the non empty cone 
$\{0\}^\TrianglesT \times\RR_{>0}^\orEdgesT$ of $\gFGp$ satisfying (\HypTree).
For arbitrary fixed triangle parameters $\gZ_\tau$, conditions (\HypLeftShift)
and (\HypEdgeSep) are always
 satisfied for big enough edge parameters $\gEp_\e$.
In particular $\ConeLS$ is a nonempty open cone.

\item The  result holds in fact in a more general setting including
  exotic buildings, see Theorem \ref{theo- local Cgeod surface}.

\end{enumerate}

\end{remas*}

A special case
with much simpler hypotheses (and proof) 
is when $\FGp$ satisfies simply
\[(\HypTree')
\left\{
\begin{array}{l}
  \abs{\Z_\tau}=\abs{\Z_\tau+1} = 1 \mbox{ for all } \tau\\ 
  \abs{\Ep_\e} > 1  \mbox{ for all }\e
\;.\end{array}
\right.
\]
Then 
all hypotheses of  Theorem \ref{theointro- Cgeod embbeded A2-complex}
are satisfied, 
$\gFGp$ satisfies (\HypTree) and $\CEsgZ$ is a graph,
and the image $\Y$ of $\Psi$ is an invariant cocompact 
$\Cc$-convex (in particular bilipschitz)  tree in the building.
The hypotheses of  Theorem \ref{theointro- Cgeod embbeded
  A2-complex} are also satisfied in the other particular case
corresponding to the following open simple condition
\[(\HypTreeOfTriangles')
\left\{
\begin{array}{l}
  \abs{\Z_\tau} \neq  1 \mbox{ for all } \tau\\ 
  \abs{\Ep_\e} > 1  \mbox{ for all }\e
,\end{array}
\right.
\]
and $\gFGp$ satisfies (\HypTreeOfTriangles),
providing an invariant $\Cc$-convex  ``tree of triangles''  $\Y$.
On the other end of the spectrum, Theorem \ref{theointro- Cgeod embbeded
  A2-complex} provides (for $\gFGp$ satisfies (\HypSf)) 
examples  of representations whose image preserves
a $\Cc$-geodesic (in particular, bilipschitz)  surface $\Y$ in the building.

\medskip

In the last part of the paper, 
we use Theorem \ref{theointro- Cgeod embbeded A2-complex} 
to describe
limit of length functions (in the associated symmetric space) 
for a large family of degenerations of
representations $\Ga\to\PGL(\RR^3)$ corresponding to convex
$\RP^2$-structures on $\Sf$.

\begin{theointro}
\label{theointro- degeneration of reps}
Let $(\gFGpn)_{\n\in\NN}$ be a sequence 
in $\RR^\TrianglesT\times\RR^\orEdgesT$.
Let $\Zn_\tau=\exp(\gZn_\tau)$ and $\Epn_\e=\exp(\gEpn_\e)$.
Let $\rhon: \Ga\to \PGL_3(\RR)$ 
be the representation of FG-parameter
$\FGpn=((\Zn_\tau)_\tau,(\Zn_\e)_\e)$.
Let $(\lan)_\n$ be a sequence of real numbers going to $\pinfty$, 
such that 
the sequence $\lslan\gFGpn$ converges to a nonzero $\gFGp$ 
in $\RR^\TrianglesT\times\RR^\orEdgesT$.
Suppose that $\gFGp$ 
is \leftshifting{}  and \edgeseparating{} 
($(\HypLeftShift)$ and $(\HypEdgeSep)$).
Let $\CEsgZ$ be the $\typeA_2$-complex of FG-parameter $\gFGp$.
Then the renormalized $\Cc$-length spectrum of $\rhon$ 
converges to the $\Cc$-length spectrum of  $\CEsgZ$, 
that is: for all $\ga\in\Ga$ we have
$$\lslan \ellC(\rhon(\ga)) \to \ellC(\ga,{\CEsgZ})$$ 
in $\Cb$.
In particular for Euclidean and Hilbert lengths,
we have then:
$$
\lslan \elleuc(\rhon(\ga)) 
\to \normeuc{\ellC(\ga,{\CEsgZ})}$$
$$
\lslan \ellH(\rhon(\ga)) 
\to\normH(\ellC(\ga,\CEsgZ))
$$
 for all $\ga\in\Ga$.
\end{theointro}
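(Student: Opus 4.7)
The strategy is to reduce Theorem~\ref{theointro- degeneration of reps} to Theorem~\ref{theointro- Cgeod embbeded A2-complex} via an asymptotic cone argument. Fix a non-principal ultrafilter $\om$ on $\NN$ and rescale the $\typeA_2$-symmetric space $\Es$ of $\PGL_3(\RR)$ by the factor $\lslan$. Taking the ultralimit of the sequence of pointed metric spaces $(\Es,\lslan\deuc,\on)$ for a suitable sequence of basepoints $\on$ yields a (non-discrete) Euclidean building $\Esom$ of type $\typeA_2$ over a non-archimedean real-valued field $\KKom$ (the asymptotic cone of $\RR$ rescaled by $\lan$), as in the constructions of Kleiner--Leeb and Parreau. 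The sequence $\rhon$ passes to an ultralimit representation $\rhoom:\Ga\to\PGL_3(\KKom)$ acting on $\Esom$, and a standard computation in the asymptotic cone gives
\[ \ellC(\rhoom(\ga)) \;=\; \omulim \lslan \ellC(\rhon(\ga)) \]
for every $\ga\in\Ga$.

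Next I would identify $\rhoom$ as a Fock--Goncharov representation. The FG construction is purely algebraic and makes sense over any field via cross-ratios of flags, so setting $\Zom_\tau=[\Zn_\tau]\in\KKom$ and $\Epom_\e=[\Epn_\e]\in\KKom$ defines an FG-parameter $\FGpom=(\Zom,\Epom)\in(\KKomol)^\TrianglesT\times(\KKomol)^\orEdgesT$. Naturality ensures that the associated representation $\rhoZom$ coincides with $\rhoom$ up to conjugation. By construction of $\KKom$, one has $\log\abs{\Zom_\tau}_\om=\gZ_\tau$ and $\log\abs{\Epom_\e}_\om=\gEp_\e$, so the geometric FG-parameter attached to $\rhoom$ is exactly the given limit $\gFGp$.

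To apply Theorem~\ref{theointro- Cgeod embbeded A2-complex} to $\rhoom$ I verify its four hypotheses. Conditions (\HypLeftShift) and (\HypEdgeSep) hold on $\gFGp$ by assumption. For (\HypFlatTriangles) and (\HypH), the ultrametric inequality in $\KKom$ gives $\abs{\Zom_\tau+1}_\om=\max(\abs{\Zom_\tau}_\om,1)\geq 1$ whenever $\abs{\Zom_\tau}_\om\neq 1$, i.e.\ whenever $\gZ_\tau\neq 0$, and similarly for the edge parameters. The boundary cases $\gZ_\tau=0$ or $\gEp_\e=0$ are handled by a small perturbation of the approximating sequence $\gFGpn$ inside the open cone $\ConeLS$ of leftshifting and edgeseparating parameters, which changes neither the limiting complex $\CEsgZ$ nor the limiting length spectrum; alternatively one can choose the lifts of $\Zom_\tau,\Epom_\e$ so that the ultrametric inequality is saturated. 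Theorem~\ref{theointro- Cgeod embbeded A2-complex} applied to $\rhoom$ then yields $\ellC(\rhoom(\ga))=\ellC(\ga,\CEsgZ)$ for every $\ga$.

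Combining the two displayed equalities gives $\omulim \lslan \ellC(\rhon(\ga))=\ellC(\ga,\CEsgZ)$ for every non-principal ultrafilter $\om$; uniqueness of ultrafilter limits in $\Cb$ then forces ordinary convergence $\lslan\ellC(\rhon(\ga))\to\ellC(\ga,\CEsgZ)$. The statements on Euclidean and Hilbert lengths follow by applying the continuous $\W$-invariant norms $\normeuc{\cdot}$ and $\normH$ on $\Aa$. The main technical obstacle is the ultralimit transfer of (\HypFlatTriangles) and (\HypH) in the degenerate cases $\gZ_\tau=0$ or $\gEp_\e=0$, where the ultrametric inequality alone does not give the required lower bound and one must either supply a perturbation argument or carefully choose generic representative sequences.
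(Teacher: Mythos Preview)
Your overall strategy matches the paper's: pass to the asymptotic cone $\Esom$ over the ultrametric field $\KKom$, identify the ultralimit representation with the FG-representation $\rhoZom$ for the ultralimit parameters $\FGpom$, apply the main theorem there to get $\ellC(\rhoom(\ga))=\ellC(\ga,\CEsgZ)$, and combine with the continuity of $\Cc$-length under asymptotic cones (from \cite{ParComp}) plus the ``every ultrafilter gives the same limit'' trick to upgrade to ordinary convergence. That part is fine.

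The gap is in your treatment of (\HypFlatTriangles) and (\HypH), which you yourself flag as the main obstacle. Both proposed fixes are problematic. The perturbation argument does not work: if $\gZ_\tau=0$ in the limit and you perturb the sequence so that the limit becomes nonzero, you change the complex $\CEsgZ$ (the triangle cell is no longer degenerate); if you keep the limit at $0$, you are back to $\absom{\Zom_\tau}=1$ and the ultrametric inequality only gives $\absom{\Zom_\tau+1}\leq 1$, not $\geq 1$. The ``choose lifts'' suggestion is also off: $\Zom_\tau$ is the ultralimit of the \emph{given} sequence $(\Zn_\tau)_\n$, a fixed element of $\KKom$, and $\absom{\Zom_\tau+1}$ is determined; there is no freedom to saturate the inequality.

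The correct argument is much simpler and is what the paper does. In the setting of this theorem the parameters are \emph{positive reals}: $\Zn_\tau=\exp(\gZn_\tau)>0$ and $\Epn_\e=\exp(\gEpn_\e)>0$. Hence in $\RR$ one has $\Zn_\tau+1>1$, so $\abs{\Zn_\tau+1}^{1/\lan}\geq 1$ for every $\n$, and taking the $\om$-limit gives $\absom{\Zom_\tau+1}\geq 1$ directly, with no case distinction on whether $\gZ_\tau=0$. The same works for $\Epom_\e$. This also yields $\Zom_\tau\neq -1$, needed to know $\FGpom$ is a valid FG-parameter. So positivity of the real parameters, not the ultrametric triangle inequality in $\KKom$, is what carries (\HypFlatTriangles) and (\HypH) across the asymptotic cone.
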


A similar result holds in more general valued field $\KK$
(see Theorem \ref{theo- degeneration of  reps}).
Note that, for a given sequence $(\gFGpn)_{\n\in\NN}$ going to
infinity,  there always
exists a convenient sequence $\lan$, taking  
$\lan=\max_{\tau,\e}\abs{\gZn(\tau)},\abs{\gEpn(\e)}$.
This describes a part (corresponding to the open cone $\ConeLS$ of
FG-parameters)  of the boundary 
(constructed in \cite{ParComp})
of the  space $\SPC(\Sf)$ 
of convex real projective structures on $\Sf$ 
(see Coro. \ref{coro- bord SPC}).
Note that
D.~Cooper, K.~Delp, D.~Long and M.~Thistle\-thwaite
 announced  results similar to Theorem 
\ref{theointro- degeneration of reps}.
Our proofs involve a geometric interpretation of FG-parameters in
Euclidean buildings of type $\typeA_2$, 
relying on results from \cite{ParTriples}
describing the geometry of triples of ideal chambers 
in relation with their triple ratio as triples of
flags. 
It allows to associate with each triangle $\tau$ of the
triangulation $\Tt$ a singular flat triangle $\Delta_\tau$ 
in the building in a canonical way.
The map $\Psi$ is then defined by sending $\CEstgZ^\tau$ to
$\Delta^\tau$.
 The main technical difficulty 
is to  prove that the map $\Psi$ is globally
 $\Cc$-geodesic. 
Note that in the case (\HypTree') of trees 
the proofs are {\em much} simpler.
Application to degenerations of representations uses asymptotic cones,
and basically reduces to prove that the Fock-Goncharov
parametrization behaves well under ultralimits
(Proposition \ref{prop- FG coords goes to asymptotic cone}).

The structure of the paper is the following:
in Section \ref{s- geometric prelim},
we recall some basic facts about Euclidean buildings of type
$\typeA_2$ that will be used throughout the article, 
and we establish a criterion for a local
 $\Cc$-geodesic  to be a global $\Cc$-geodesic 
(Proposition \ref{prop- critere Cgeod avec sing}) that will be used to
prove global $\Cc$-geodesicity for $\Psi$.
In Section \ref{s- FG parametrization}, 
we explain Fock-Goncharov parametrization for
representations in any field $\KK$.
In Section \ref{s- construction A2-complexe CgZ},
we introduce the notion of $\typeA_2$-complexes,
and we construct the
$\typeA_2$-complex $\CEsgZ$ associated with a 
 \leftshifting{} geometric FG-parameter $\gFGp$ 
and discuss the special cases (from trees
to surfaces).
In Section \ref{s- actions on buildings},
 we study actions on Euclidean buildings (possibly exotic), 
introducing a purely geometric version of  FG-invariants, 
and we prove the main result (Theorem 
\ref{theointro- Cgeod embbeded A2-complex}) in this wider setting.
Finally, in Section \ref{s- degenerations of representations}, 
we study degenerations of representations
and prove Theorem \ref{theointro- degeneration of reps},
introducing asymptotic cones of projective spaces 
and studying 
the asymptotic behaviour of Fock-Goncharov parametrizations.

\paragraph{\bfseries{Aknowledgments}}
I would like to thank Frédéric Paulin 
for usefull discussions 
and comments on the preliminary version. 
I also want to thank the members of the  Institut
Fourier for their support.

\section{Geometric preliminaries}
\label{s- geometric prelim}

\subsection{Projective geometry}
\label{ss- projective spaces} 
We here collect notations for projective geometry
which will be used throughout this article.

\subsubsection*{\NondegeneratedQ{} quadruples on a projective line}
\label{s- nondegeneratedQ}
Cross ratios on projective lines will be defined on 
quadruples $(\xi_1,\xi_2,\xi_3,\xi_4)$ of points satisfying the
following nondegeneracy condition:
  ({\em no triple point}, i.e. any three of the points are not equal,
or, equivalently,
\begin{equation}
\label{eq- nondegeneratedQ}
  (\xi_1 \neq \xi_4 \mbox{ and } \xi_2 \neq \xi_3)
  \mbox{ or }
  (\xi_1 \neq \xi_2 \mbox{ and }  \xi_3 \neq \xi_4)
\;.
\end{equation}
The quadruple $(\xi_1,\xi_2,\xi_3,\xi_4)$ is 
then called {\em \nondegeneratedQ}.
\subsubsection*{Projective planes}
Let $\PP$ be a projective plane.
We denote by $\PP^*$  the dual projective plane,
i.e. the set  of lines in $\PP$.
We will denote $\p\oplus\q$ or $\p\q$ the line joining two distinct
point $\p$, $\q$ in $\PP$.

We denote by $\MaxFlags(\PP)$ the set of  (complete) flags 
$\F=(\p,\D)\in\PP\times\PP^*$,   $\p\in\D$, 
in the projective plane  $\PP$.
Two flags are called {\em opposite} if they are in generic position.

\subsubsection*{Triples of flags}
\label{s- nondegeneratedTF}
Let $\Tau=(\F_1,\F_2,\F_3)$ be a triple of flags
 $\F_\ii=(\p_\ii,\D_\ii)$ in $\PP$.
We will  denote by $\p_\ij$ the point $\D_\i\cap \D_\j$
 (resp. $\D_\ij$ the line $\p_\i\p_\j$), when defined.

%

%
The natural  nondegeneracy condition on the triple $(\F_1,\F_2,\F_3)$ 
for the triple ratios  to be well defined is the following:
\begin{center}
(\DefTR)
either for all $\ii$, $\p_\ii \notin \D_{\ii+1}$ 
or for all $\ii$, $\p_\ii \notin \D_{\ii-1}$. 
\end{center}
This condition is clearly equivalent to:
the points are pairwise distinct, the lines are pairwise distinct,
none of the points is on the three lines 
(i.e. $\D_\ii\cap \D_\jj \neq \p_\kk$ for all $\{\ii,\jj,\kk\}=\{1,2,3\}$)
and none of the lines contains the three points
(i.e. $\p_\ii \p_\jj \neq \D_\kk$ for all $\ii,\jj,\kk$). 
We will then say that the triple $(\F_1,\F_2,\F_3)$ is 
{\em \nondegeneratedTF}.

It is easy to check that the triple $\Tau$ defines then 
a \nondegeneratedQ{} quadruple of well-defined lines
$\D_\ii$, $\p_\ii\p_\jj$, $\p_\ii\p_\jk$, $\p_\ii\p_\kk$ 
through each point $\p_\ii$,
and a \nondegeneratedQ{} quadruple of well-defined points 
$\p_\ii$, $\D_\ii\cap\D_\jj$, $\D_\ii\cap\D_\jk$, $\D_\ii\cap\D_\kk$ 
on each line $\D_\ii$.

The  triple of flags  $\Tau=(\F_1,\F_2,\F_3)$ is {\em \genericTF} if
the flags $\F_\ii=(\p_\ii,\D_\ii)$ are pairwise opposite, 
the points $(\p_\ii)_\ii$ are not collinear 
and the lines  $(\D_\ii)_\ii$ are not concurrent.
In particular, $\Tau$ is then \nondegeneratedTF{}, and the induced
quadruples of points on each line (resp. of lines through each point)
are generic (pairwise distinct).

\subsection{The model finite reflection group 
\texorpdfstring{$(\Aa,\W)$ of type $\typeA_2$}
{(A,W) of type A\_2}
}
 The {\em model flat} (of type $\typeA_2$)
is  the vector space 
$\Aa=\RR^3/\RR(1,1,1)$, endowed with the action of
the {\em Weyl group} $\W=\Sym_3$ acting on $\Aa$ by permutation of
coordinates, which is a finite reflection group. 
We denote by $\Waff$ the subgroup of  affine isomorphisms of $\Aa$ with
linear part in $\W$.
We denote by 
$\classA{\vRN}$
the projection in $\Aa$ of a vector $\vRN$ in $\RR^3$.
The vector space $\Aa$ will be identified with the hyperplane 
$\{\vA=(\vA_1,\vA_2,\vA_3)\in\RR^3/\ \sum_\indN \vA_\indN=0\}$ of $\RR^3$.

Recall that a vector in $\Aa$ is called {\em singular} if it belongs
to one the three lines
$\vA_\indN=\vA_\indNp$, and {\em regular} otherwise. 
A {\em (open) (vectorial) Weyl chamber} of $\Aa$ is a connected component of
regular vectors.
The {\em model Weyl chamber} 
is $\Cc=\{\vA\in\Aa/\ \vA_1 > \vA_2 >\vA_3\}$.
 Its closure $\Cb$ is a strict fundamental domain for the action of
 $\W$ on $\Aa$, 
and we denote by $\Cabso:\Aa\to \Cb$ the canonical projection, which maps
a vector $\vA\in\Aa$ to its {\em \type} in $\Cb$.
We denote by $\bord \Aa$ the subset of unitary vectors in $\Aa$,
identified with the set $\PP^+(\Aa)=(\Aa-\{0\})/\RR_{>0}$ 
of rays issued from $0$,
and $\bord: \Aa \to \bord \Aa$ the corresponding projection.
The {\em type (of direction)}  of 
a nonzero vector $\vA \in \Aa$ 
is its canonical projection $\bord(\Cabso(\vA))$ in $\bord \Cb$.

The  {\em simple roots} (associated with $\Cc$) are the linear forms 
$$\rac1: \vA\mapsto \vA_1-\vA_2$$
$$\rac2: \vA \mapsto\vA_2-\vA_3$$
and we denote by
$\rac3: \vA \mapsto\vA_3-\vA_1 $
the root satisfying $\rac1+\rac2+\rac3=0$.

A singular vector $\vA$ is said to be {\em of type $\stypep$} 
if its type in $\Cb$ 
satisfies $\vA_1 > \vA_2=\vA_3$,
and {\em of type $\stypeD$} 
if its type 
satisfies $\vA_1=\vA_2> \vA_3$.
Recall that two nonzero vectors $\vA$ and $\vAp$ of $\Aa$ 
are called {\em opposite} if $\vAp=-\vA$.
Similarly, two Weyl chambers $\Ch$ and $\Chp$ of $\Aa$ 
are {\em opposite} if $\Chp=-\Ch$.
We denote by $\wopp$ the unique element of $\W$ sending $\Cc$ to  $-\Cc$,
and by $\vA^\opp=\wopp(-\vA)=(-\vA_3,-\vA_2,-\vA_1)$ the
image of $\vA$ by the {\em opposition involution} $\opp$ of $\Aa$.

We will normalize the $\W$-invariant Euclidean norm $\norm{\cdot}$ on
$\Aa$ by requiring that the simple roots have unit norm.  
The associated Euclidean metric on $\Aa$ is denoted by $\deuc$.
The {\em $\Cc$-distance} on $\Aa$ (or {\em $\Cc$-length} of segments)
is the canonical projection 
$\Cd:\Aa\times\Aa\to \Cb$
which is defined by $\Cd(\xA,\yA)=\Cabs{\yA-\xA}$.

\label{s- def normH}%
We will denote by $\normH$ the {\em \hex}-norm, 
that is the $\W$-invariant norm  on $\Aa$ 
defined by 
$$\normH(\vA)=\vA_1-\vA_3=-\rac3(\vA)$$
for $\vA$ in $\Cc$, 
whose unit ball is a  regular hexagon with singular sides. 

\subsection{Euclidean buildings}
The Euclidean buildings considered in this article are 
  $\RR$-buildings, in particular they are not necessarily discrete
 (have no simplicial complex structure) nor locally compact.
We refer to \cite{ParImm} for their definition and basic properties
(see also \cite{Tits86}, \cite{KlLe97}, \cite{Rousseau09}). 
Let $\Es$ be a Euclidean building of type $\typeA_2$.
Recall that 
$\Es$ is a $\CAT(0)$ metric space 
endowed with a (maximal) collection $\Apps$ of
isometric embeddings $\f:\Aa \to \Es$ called  {\em marked apartments},
or {\em marked flats} by analogy with  Riemannian symmetric spaces,
satisfying the following properties
\begin{mydescription}
  \item[{\bfseries (A1)}] $\Apps$ is invariant by precomposition by $\Waff$
    ;
  \item[{\bfseries (A2)}] If $\f$ and $\f'$ are two marked flats, then the
    transition map $\f^{-1}\circ \f'$ is 
in
 $\Waff$
    ;
  \item[{\bfseries (A3')}] Any two rays of $\Es$ are initially 
 contained in a common marked flat.
\end{mydescription}
 The {\em flats} 
(resp. the {\em Weyl chambers}) 
of $\Es$ are the images of $\Aa$ (resp. of $\Cc$)
by the marked flats.

We say that we are in the {\em algebraic case}
when $\Es$ is the Euclidean building $\Es(\V)$ associated with some
$3$-dimensional vector space $\V$ on an ultrametric field $\KK$.
We then denote by $\abs{\cdot}$ the absolute value of $\KK$.

Recall that, in Euclidean buildings, 
two (unit speed) geodesic segments issued from
a common point $\xE$ have zero angle 
if and only if 
they have same germ at $\xE$ (i.e. coincide in a neighborhood of $\xE$).
A {\em direction} at $\xE\in\Es$ is 
a germ of (unit speed) geodesic segment from $\xE$.
A direction, geodesic segment, ray or line 
has a well-defined {\em type (of direction)} in $\bord\Cb$, 
which is its canonical
projection (through a marked flat) in $\bord\Cb$.
It is called {\em singular} or {\em regular} accordingly.
The {\em space of directions} (or {\em unit tangent cone}) at $\xE$
is denoted by $\TangS_\xE\Es$. It is endowed with the angular metric.
We denote by   $\TangS_\xE:\Es-\{\xE\}\to \TangS_\xE \Es$
the associated projection.
The space of directions $\TangS_\xE\Es$  
is a spherical building of type $\typeA_2$, 
whose apartment are the germs $\TangS_\xE \App$ at $\xE$
of the flats $\App$ of $\Es$ passing through $\xE$,
and whose chambers (i.e. $1$-dimensional simplices) 
are the germs $\TangS_\xE \Ch$ at $\xE$ 
of the Weyl chambers $\Ch$ of $\Es$ with vertex $\xE$ 
(see for example \cite{ParImm}).
The {\em local projective plane  at $\xE$} $\PP_\xE=\PP_\xE(\Es)$
is the projective plane associated to the spherical
$\typeA_2$-building  $\TangS_\xE\Es$,
i.e. the projective plane whose incidence graph is $\TangS_\xE\Es$:
Its points are the singular directions of type $\stypep$
and its lines are the singular directions of type $\stypeD$ at $\xE$.
Recall that, in a spherical building, any two points (resp. chambers)
 are contained in a common apartment,
and that they are  {\em opposite} if they
are opposite in that apartment.

Two Weyl chambers $\Ch,\Chp$ of $\Es$ with common vertex $\xE$
 are {\em opposite} (at $\xE$) 
if their union contains a regular geodesic line passing
by $\xE$, 
or, equivalently, 
if they define opposite chambers $\TangS_\xE\Ch$,
$\TangS_\xE\Chp$ in the spherical building $\TangS_\xE\Es$ of
directions at $\xE$.
Then there exists a unique flat of $\Es$ containing both $\Ch$ and $\Chp$.

\subsection{The boundary of a 
\texorpdfstring{$\typeA_2$}{A\_2}%
-Euclidean building and its projective geometry}

\subsubsection{The projective plane at infinity}
We denote by $\bordinf\Es$ the $\CAT(0)$ boundary of $\Es$.
The {\em type} 
of an ideal point
$\xi\in\bordinfEs$ is the type in $\bord\Cb$ of any ray to $\xi$. 
The boundary $\bordinf\Es$ of $\Es$ is the incidence graph of a
projective plane $\PP=\PP_\infty(\Es)$ whose points are the singular
points of type $\stypep$ of $\bordinf\Es$ and lines are the singular
points of type $\stypeD$ of $\bordinf\Es$.
The set $\bordF\Es$ of chambers at infinity of $\Es$
(Furstenberg boundary) identifies then with 
the set $\MaxFlags(\PP)$ of (complete) flags 
$\F=(\p,\D)\in \PP\times\PP^*$, $\p\in\D$, 
in the projective plane $\PP$.

In the algebraic case, the projective plane $\PP$ at infinity of
$\Es=\Es(\V)$ is the classical projective plane $\PV$.

For $\xE\in\Es$, 
we denote by $\TangS_\xE:\yE \to \TangS_\xE\yE$ 
the canonical projection from $\bordinf\Es$ to the unit tangent cone
$\TangS_\xE\Es$ at $\xE$. 
The canonical projection $\TangS_\xE: \bordinf\Es \to \TangS_\xE \Es$
preserves the simplicial structure
and  the type (in $\bord\Cb$) of points, and in particular it 
induces  the {\em  canonical projection}
$\TangS_\xE:\PP \to \PP_\xE$, 
which is a surjective morphism of projective planes 
(i.e. if $\p\in\PP$ and $\D\in\PP^*$, 
then $\TangS_\xE\p \in\PP_\xE$ and $\TangS_\xE\D \in\PP_\xE^*$,
and $\p\in\D$ implies $\TangS_\xE\p \in \TangS_\xE\D$).

If $\chp$ and $\chm$ are opposite flags in $\PP$ (i.e. chambers at
infinity of $\Es$), then
we denote by $\App(\chm,\chp)$ the unique flat joining $\chm$ to $\chp$ in
$\Es$.
A basic fact is that given a generic (i.e. non collinear) 
triple of  points $\p_1,\p_2, \p_3$ 
in $\PP$ there exists a unique flat $\App(\p_1,\p_2, \p_3)$ of
$\Es$ containing them in its boundary 
(and the analog holds for lines).

\subsubsection{Transverse trees at infinity}
\label{s- transverse spaces at infinity}
(See for example \cite[\S 8]{Tits86}, 
\cite[1.2.3]{Leeb00}, \cite[\S 4]{MSVM14-I}.)
We denote by $\Es_\xi$ the transverse tree at a singular ideal point $\xi$
in $\bordinf\Es$ 
which may be defined, from the metric viewpoint,  as 
the space of classes of strongly asymptotic rays to $\xi$
the quotient space of the space of all rays to $\xi$ 
by the pseudodistance $\dxi$ given by
$$\dxi(\ray_1,\ray_2)=\inf_{\tRR_1,\tRR_2}\deuc( \ray_1(\tRR_1),
\ray_2(\tRR_2) ) \;.$$

We denote by $\projE_\xi:\Es\to\Es_\xi$ the canonical projection. 
Recall that $\Es_\xi$ is a $\RR$-tree, and that its
boundary $\bordinf\Es_\xi$ identifies with the set of singular points
of $\bordinf\Es$ adjacent to $\xi$. 
In particular,
if $\p$ is a point in $\PP$,
 then the boundary of the associated tree $\Es_\p$
 is identified with the set $\p^*$ of lines
$\D$ through $\p$ in the projective plane $\PP$.
Similarly, the boundary of the tree $\Es_\D$ associated with a
line $\D$ of $\PP$ is identified with the set $\D^*$ 
of points $\p$ of $\PP$ that belong to $\D$.

\subsubsection{The 
\texorpdfstring{$\Aa$}{A}-valued Busemann cocycle}
We denote by $\Buso_\ch:\Es\times\Es\to\Aa$ 
the $\Aa$-valued Busemann cocycle 
associated with an ideal chamber $\ch$ of $\Es$, 
which is defined by
$$\Bus{\ch}{\f(\vA)}{\f'(\vA')}=\vA'-\vA$$
for all marked flats $\f, \f': \Aa \to \Es$ sending $\bord\Cc$ to
$\ch$ and {\em very strongly asymptotic}
that is 
such that $\deuc(\f(\ray(\tRR)), \f'(\ray(\tRR)))$ goes to zero when
$\tRR\to \pinfty$ for one (all) regular ray $\ray$ in $\Cc$
(which in Euclidean buildings is equivalent to: 
$\f=\f'$ on some subchamber $\vA"+\Cc$).
\label{ss- usual Bus cocycle}
Note that in rank one (when $\dim\Aa=1$) this is the usual Busemann
cocycle, which is defined by
$$\Bus{\xi}{\xE}{\yE}=\lim_{\zE\ra \xi}\deuc(\xE,\zE)-\deuc(\yE,\zE)$$

We will use the following basic property, that describes the behaviour
of Busemann cocycle associated with ideal chamber $\ch=(\p,\D)$
upon projections to transverse trees at infinity $\Es_\p$ and $\Es_\D$.

\begin{equation}
\label{eq- projections and Busemann cocycle}
    \begin{array}{rl}
\rac1(\Bus{(\p,\D)}{\xE}{\yE})
&= \Bus{\p} {\projE_\D(\xE)} {\projE_\D(\yE)}\\
\rac2(\Bus{(\p,\D)}{\xE}{\yE})
&= \Bus{\D} {\projE_\p(\xE)} {\projE_\p(\yE)}
    \end{array}
\end{equation}

If $\chp$ and $\chm$ are opposite chambers at infinity, then
  \begin{equation}
\label{eq- Bus and opp chambers}
   \Bus{\chp}{\xE}{\yE}=-(\Bus{\chm}{\xE}{\yE})^\opp \mbox{ for } \xE,\yE
   \mbox{ in the flat }\App(\chm,\chp)
  \end{equation}

\subsubsection{Cross ratio on the boundary of a tree.}
\label{ss- geometric cross ratio on the boundary of a tree}
(See \cite[\S 7]{Tits86}, 
and for a more general setting \cite{Otal92}, \cite{Bourdon96}).
In this section, we suppose that $\Es$ is 
a $\RR$-tree, 
and we denote by $\bordinfEs$ its boundary at infinity.
Given three distinct ideal points $\xi_1,\xi_2,\xi_3$ in $\bordinfEs$,
we denote by $\centre(\xi_1,\xi_2,\xi_3)$ the {\em center} of the ideal
triple $\xi_1,\xi_2,\xi_3$, that is the unique intersection point of
the three geodesics joining two of the three points.
\noindent 
{
  {\advance\linewidth by-6.2cm
    \begin{minipage}{\linewidth}
      The {\em cross ratio} of four pairwise distinct 
      points $\xi_1$, $\xi_2$, $\xi_3$, $\xi_4$ in  $\bordinfEs$ 
      is defined 
      as the oriented distance 
      on the geodesic
      from $\xi_3$ to $\xi_1$, 
      %
      %
      %
      %
      from the center $\xT$ of the ideal triple $\xi_3,\xi_1,\xi_2$
      to the center $\yT$ of the ideal triple $\xi_3,\xi_1,\xi_4$
      \begin{equation}
        \label{eq- geombir and centers of tripods}
        \geombir(\xi_1, \xi_2, \xi_3,\xi_4) 
        = \ovra{\xT\yT}
        = \Bus{\xi_1}{\xT}{\yT}
        \;.
      \end{equation}
    \end{minipage}
  }  %
  \hfill
  \begin{minipage}{5.3cm}
    %
    \centering\includegraphics{figures/geom_crossratio_tree_centers_tripods.fig}
  \end{minipage}
}%
In the case where some of the points coincide,
the cross ratio is still defined if
the quadruple $(\xi_1,\xi_2,\xi_3,\xi_4)$ is \nondegeneratedQ{}
(see section \ref{s- nondegeneratedQ}).
It is then set to $0$ when $\xi_1 = \xi_3$ or $\xi_2 = \xi_4$,
$\minfty$ when $\xi_1 = \xi_2$ or $\xi_3 = \xi_4$,
and $\pinfty$ when $\xi_1 = \xi_4$ or $\xi_2 = \xi_3$.

We recall that the cross ratio is invariant under double
transpositions and satisfies the following properties.

\begin{prop} 
\label{prop- geombir}
We have
  \begin{enumerate}
  \item 
    $ \geombir(\xi_3, \xi_2, \xi_1, \xi_4)
    = \geombir(\xi_1, \xi_4, \xi_3, \xi_2)
    = -\geombir(\xi_1, \xi_2, \xi_3, \xi_4)$ ;
  \item 
    $ \geombir(\xi_1, \xi_2, \xi_3, \xi_4)
    +  \geombir(\xi_1, \xi_4, \xi_2, \xi_3)
    +  \geombir(\xi_1, \xi_3, \xi_4, \xi_2) 
    = 0 $ ;

\item 
if $\geombir(\xi_1, \xi_2, \xi_3,\xi_4)>0$,\\
then $\geombir(\xi_1, \xi_3, \xi_4,\xi_2)=0$ 
and  $\geombir(\xi_1, \xi_4, \xi_2,\xi_3)=-\geombir(\xi_1,
\xi_2,\xi_3,\xi_4)$ ;

%

%
  \item 
    $\geombir(\xi_1, \xi_2, \xi_3,\xi_4)
    +\geombir(\xi_1, \xi_4,\xi_3,\xi_5)
    =
    \geombir(\xi_1, \xi_2, \xi_3,\xi_5)$ \;.
  \end{enumerate}
\end{prop}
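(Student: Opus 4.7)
The plan is to rewrite every assertion in terms of the formula $\geombir(\xi_1,\xi_2,\xi_3,\xi_4) = \Bus{\xi_1}{\xT}{\yT}$ from \eqref{eq- geombir and centers of tripods} and then invoke only two elementary properties of the scalar Busemann cocycle on the $\RR$-tree $\Es$: antisymmetry $\Bus{\xi}{x}{y} = -\Bus{\xi}{y}{x}$ and the cocycle relation $\Bus{\xi}{x}{y}+\Bus{\xi}{y}{z}=\Bus{\xi}{x}{z}$ (cf. \S\ref{ss- usual Bus cocycle}). For a fixed quadruple $\xi_1,\xi_2,\xi_3,\xi_4$, I introduce the three tripod centers
\[
\xT_{23}=\centre(\xi_1,\xi_2,\xi_3),\quad
\xT_{24}=\centre(\xi_1,\xi_2,\xi_4),\quad
\xT_{34}=\centre(\xi_1,\xi_3,\xi_4).
\]
Each of these lies on a ray to $\xi_1$, so they are all eventually on a common ray to $\xi_1$, on which $\Bus{\xi_1}{\cdot}{\cdot}$ is the oriented distance.

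For assertion (i): in $\geombir(\xi_3,\xi_2,\xi_1,\xi_4)$ the relevant centers are those of $(\xi_1,\xi_3,\xi_2)$ and $(\xi_1,\xi_3,\xi_4)$, i.e. the same points $\xT_{23},\xT_{34}$, but the oriented distance is read on the geodesic from $\xi_1$ to $\xi_3$ — the opposite orientation — giving $\Bus{\xi_3}{\xT_{23}}{\xT_{34}} = -\Bus{\xi_1}{\xT_{23}}{\xT_{34}}$. In $\geombir(\xi_1,\xi_4,\xi_3,\xi_2)$ the centers are the same pair in reversed order, producing $\Bus{\xi_1}{\xT_{34}}{\xT_{23}} = -\Bus{\xi_1}{\xT_{23}}{\xT_{34}}$ by antisymmetry. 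For (iv), setting $\xT_{35}=\centre(\xi_1,\xi_3,\xi_5)$, the three cross ratios equal $\Bus{\xi_1}{\xT_{23}}{\xT_{34}}$, $\Bus{\xi_1}{\xT_{34}}{\xT_{35}}$ and $\Bus{\xi_1}{\xT_{23}}{\xT_{35}}$, and the identity is the Busemann cocycle.

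For (ii), the three cross ratios translate to $\Bus{\xi_1}{\xT_{23}}{\xT_{34}}$, $\Bus{\xi_1}{\xT_{24}}{\xT_{23}}$ and $\Bus{\xi_1}{\xT_{34}}{\xT_{24}}$, which telescope to $0$ by the cocycle relation applied twice.

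Assertion (iii) is the only place requiring tree combinatorics; this is the main obstacle, but it reduces to one case distinction. If $\geombir(\xi_1,\xi_2,\xi_3,\xi_4)>0$ then $\xT_{23}\neq \xT_{34}$, so the convex hull of $\{\xi_1,\xi_2,\xi_3,\xi_4\}$ in $\Es$ is a quadrupod with two distinct internal vertices. The positivity of $\Bus{\xi_1}{\xT_{23}}{\xT_{34}}$ means $\xT_{34}$ lies strictly closer to $\xi_1$ than $\xT_{23}$ on the common ray, which forces the leaf-pairing $\{\xi_1,\xi_4\}\,|\,\{\xi_2,\xi_3\}$ separated by the internal edge $[\xT_{23},\xT_{34}]$. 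Inspecting the tripods in this configuration gives $\xT_{24}=\xT_{34}$, hence $\geombir(\xi_1,\xi_3,\xi_4,\xi_2)=\Bus{\xi_1}{\xT_{34}}{\xT_{24}}=0$; the last equality then follows from (ii).
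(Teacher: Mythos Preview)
Your proof is correct. The paper does not actually prove this proposition: it is introduced with ``We recall that the cross ratio \ldots\ satisfies the following properties'' and then stated without argument, as a known fact about cross ratios on boundaries of trees. Your approach---rewriting each cross ratio via \eqref{eq- geombir and centers of tripods} as a Busemann increment $\Bus{\xi_1}{\cdot}{\cdot}$ between suitable tripod centers and then using only antisymmetry and the cocycle identity---is clean and efficient. The identifications of centers you make are all correct (in particular, for (ii) the three Busemann terms telescope because the cocycle relation holds for arbitrary points of the tree, not only collinear ones). For (iii), your tree picture is right: positivity forces $\xT_{34}$ strictly between $\xT_{23}$ and $\xi_1$ on the geodesic $\xi_1\xi_3$, hence the $\xi_4$-branch leaves that geodesic at $\xT_{34}$, and since the geodesics $\xi_1\xi_2$ and $\xi_1\xi_3$ coincide on $[\xT_{34},\xT_{23}]$ one gets $\xT_{24}=\xT_{34}$. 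One small remark: your argument tacitly assumes the four points are pairwise distinct; the degenerate cases (with the conventional values $0,\pm\infty$) are immediate from the definitions but should at least be mentioned.
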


\subsubsection{Cross ratio on the boundary of a
\texorpdfstring{$\typeA_2$}{A\_2}-Euclidean building}
\label{s- geom cross ratio in building}
See \cite{Tits86}.
Let $\Es$ be a Euclidean building of type $\typeA_2$ and $\PP$ the
associated projective plane at infinity. 
We denote by $\geombir(\p_1, \p_2,\p_3, \p_4)$
the {\em (geometric) cross ratio} 
({\em projective valuation} in \cite{Tits86})
of a \nondegeneratedQ{} 
quadruple $(\p_1, \p_2,\p_3, \p_4)$
of points lying on a common line $\D$ of $\PP$. 
We recall that  it is defined 
as their cross ratio as points 
in the boundary of the transverse tree $\Es_\D$ at ideal point
$\D$ of $\Es$. 
We similarly denote by $\geombir(\D_1, \D_2,\D_3, \D_4)$
the geometric cross ratio 
of four lines $\D_1$, $\D_2$, $\D_3$, $\D_4$ 
through a common point $\p$ of $\PP$, 
which is defined as their cross ratio as
points in the boundary 
of the transverse tree $\Es_\p$ at ideal point $\p$ of $\Es$. 
Recall that perspectivities
preserve cross ratios, that is
$$\geombir(\p_1, \p_2,\p_3, \p_4)=\geombir(\q\p_1, \q\p_2,\q\p_3, \q\p_4)$$
$$\geombir(\D_1, \D_2,\D_3, \D_4)
=\geombir(\L\cap\D_1,\L\cap\D_2,\L\cap\D_3, \L\cap\D_4)$$
(when defined).

In the algebraic case, $\PP=\PP(\KK^3)$ and 
the  geometric cross ratio $\geombir$ is then obtained 
from the usual (algebraic) cross ratio $\Bir$ 
(see section \ref{ss- algebraic cross ratio} for the definition)
by
\begin{equation}
  \label{eq- link geom and alg cross ratio} 
  \begin{array}{lcl}
    \geombir(\p_1, \p_2,\p_3, \p_4)
    & = & \log \abs{ \Bir(\p_1, \p_2,\p_3, \p_4)}\\
    \geombir(\D_1, \D_2,\D_3, \D_4)
    & = & \log \abs{ \Bir(\D_1, \D_2,\D_3, \D_4)}
  \end{array}
\end{equation}
 (see for example \S 1.10 in \cite{ParTriples}).

\subsection{\texorpdfstring{$\Cc$}{C}-distance, translation lengths,
and \texorpdfstring{$\Cc$}{C}-geodesics}

\subsubsection*{The \texorpdfstring{$\Cc$}{C}-distance}
The {\em $\Cc$-distance} on $\Es$ 
is the map  $\Cd:\Es\times\Es\to \Cb$ 
defined by
$\Cd(\f(\xA),\f(\yA))=\Cd(\xA,\yA)$
for any marked flat $\f:\Aa\to \Es$ and $\xA,\yA\in\Aa$.
Note that we  have
$\Cd(\yE,\xE)=\Cd(\xE,\yE)^ \opp$.
The $\Cc$-distance may be seen as a refinement of 
the usual distance $\deuc$, since
$$\deuc(\xE,\yE)=\normeuc{\Cd(\xE,\yE)}\;.$$

\subsubsection*{The \texorpdfstring{$\Cc$}{C}-length of an autorphism}
Let $\g$ be an automorphism of $\Es$.
The usual {\em (translation) length} of $\g$ 
is $\elleuc(\g) = \inf_{\xE\in\Es} \deuc(\xE,\g\xE)$,
and 
will be called the {\em Euclidean (translation) length} of $\g$.

We will denote by $\ellC(\g)$ the   {\em $\Cc$-(translation) length} of
$\g$ (called {\em vecteur de translation} in \cite{ParComp}),
which is the unique vector of minimal length in (the closure in $\Cb$
of)
$\{\Cd(\xE,\g\xE),\ \xE\in\Es\}$.
We recall that in the algebraic case, for $\g\in\PGL_3(\KK)$, 
we have 
$$\ellC(\g)=\classA{(\log\abs{\aK_\indN})_\indN}$$ 
where the $\aK_\indN$ are the eigenvalues of  $\g$. 
The $\Cc$-length refines the Euclidean length as
$\elleuc(\g)=\norm{\ellC(\g)}$.
We will also consider the {\em Hilbert length}  
$$\ellH(\g)=\normH(\ellC(\g)$$
of $\g$, which correspond 
to the translation length for the Hilbert metric in the
case of holonomies of convex projective structures.

\subsubsection*{The \texorpdfstring{$\Cc$}{C}-geodesics}
The {\em $\Cc$-length} of a piecewise affine path $\apath$ 
with vertices  $\xE_0$, $\xE_1$,\ldots, $\xE_\IndPath$ in $\Es$
is the vector 
$$\ellC(\apath)=\sum_{\indPath}\Cd(\xE_\indPath,\xE_{\indPath+1})$$ 
in the closed Weyl chamber $\Cb$.
\begin{defi}
  A piecewise affine path $\apath: [ 0 , \sRR ] \to \Es$
will be called a {\em $\Cc$-geodesic} if there is a marked flat
$\f:\Aa\to\Es$ such that 
$\apath$ is  the image by $\f$ of a
(piecewise affine) path $\apathA: [ 0 , \sRR ]\to \Aa$ such that 
$\dot{\apathA}(\tRR)\in\Cb$ for almost all $\tRR\in [ 0 , \sRR ]$.
\end{defi}

Note that a piecewise affine path in $\Aa$
is a $\Cc$-geodesic if and
only if it is a geodesic for the \hex-metric 
(that is the metric induced by the \hex-norm $\normH$).
 The following proposition 
collects some obvious properties of $\Cc$-geodesics
that are  needed in this article 
(they actually satisfy stronger properties, see \cite{ParCd}).

\begin{prop}
Let $\apath: [ 0 , \sRR ] \to \Es$ be a $\Cc$-geodesic from $\xE$ to
$\yE$ in $\Es$. Then 
\begin{enumerate}
\item 
the $\Cc$-length $\ellC(\apath)$ of $\apath$ 
is equal  to 
the $\Cc$-distance $\Cd(\xE,\yE)$,

\item any flat containing $\xE$ and $\yE$ contains $\apath$.
\qed
\end{enumerate}
\end{prop}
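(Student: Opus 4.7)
The plan is to unwind the definition of a $\Cc$-geodesic, $\apath=\f\circ \apathA$ for some marked flat $\f:\Aa\to\Es$ and a piecewise affine path $\apathA:[0,\sRR]\to\Aa$ with $\dot{\apathA}\in\Cb$ almost everywhere, and reduce both assertions to computations in the model flat.

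For assertion (i), let $0=t_0<\cdots<t_\IndPath=\sRR$ be the breakpoints of $\apathA$. On each subinterval, the increment $\apathA(t_{\indPath+1})-\apathA(t_\indPath)$ lies in $\Cb$, since the velocity lies in the convex cone $\Cb$. As $\Cabso$ restricts to the identity on $\Cb$ and $\f$ preserves the $\Cc$-distance, each summand in the definition of $\ellC(\apath)$ equals $\apathA(t_{\indPath+1})-\apathA(t_\indPath)$; telescoping yields $\ellC(\apath)=\apathA(\sRR)-\apathA(0)$, which again belongs to $\Cb$ by convexity of the cone, and hence is exactly $\Cd(\xE,\yE)$.

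For assertion (ii), I split on whether $\vA:=\Cd(\xE,\yE)$ is regular or singular. If $\vA$ is regular, the geodesic segment from $\xE$ to $\yE$ is regular and extends uniquely to a regular geodesic line, which in an $\typeA_2$-Euclidean building lies in a unique flat (its pair of opposite ideal chambers determines a unique apartment of the spherical building at infinity). Hence $\f(\Aa)$ is the only flat containing $\xE$ and $\yE$, and the conclusion is automatic. If $\vA$ is singular, I will show that in $\Aa$ the only $\Cc$-geodesic from $0$ to $\vA$ is the straight segment: decomposing $\dot{\apathA}=a u_1+b u_2$ along the two edge directions $u_1,u_2$ of the cone $\Cb$ with $a,b\geq 0$, and using that $\vA$ lies on one edge, say $\RR_{\geq 0}u_1$, integration forces $\int b\,d\tRR=0$, so $b\equiv 0$ almost everywhere, and $\apathA$ moves affinely along $u_1$. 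Consequently $\apath$ coincides with the $\CAT(0)$-geodesic segment from $\xE$ to $\yE$, which is contained in every flat through $\xE,\yE$ by convexity of flats.

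The main obstacle is the regular case of (ii), which rests on the standard but nontrivial fact that a regular geodesic line of an $\typeA_2$-Euclidean building is contained in a unique flat, equivalent to the uniqueness of the apartment of a spherical $\typeA_2$-building through two opposite chambers; this should be explicitly referenced from the background recalled in Section \ref{s- geometric prelim}.
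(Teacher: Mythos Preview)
Your argument for (i) and for the singular case of (ii) is correct. However, the regular case of (ii) contains a genuine gap: in a non-trivial Euclidean building, a regular geodesic segment $[\xE,\yE]$ does \emph{not} extend uniquely to a regular geodesic line. At the endpoint $\yE$, the germ of $[\xE,\yE]$ determines a chamber in the spherical building $\TangS_{\yE}\Es$, but in a thick building there are many chambers opposite to it, each giving a different geodesic extension and hence a different flat through $\xE$ and $\yE$. Concretely, take a flat $\App$, a wall $H\subset\App$, and $\xE,\yE$ both in one open half-flat $\App^{+}$: then $\App$ and any flat $\App^{+}\cup\App'^{-}$ obtained by replacing the other half by a half-flat $\App'^{-}$ branching along $H$ both contain $\xE$ and $\yE$. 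So uniqueness of the flat fails, and your reduction collapses.

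The uniform argument (which the paper leaves implicit, stating the proposition without proof) is as follows, and it covers the regular and singular cases simultaneously. Let $\App$ be any flat containing $\xE,\yE$ and set $\App'=\f(\Aa)$. In a Euclidean building, $\f^{-1}(\App\cap\App')\subset\Aa$ is an intersection of root half-planes. Since $\dot\apathA\in\Cb$ almost everywhere, for every root $\varphi$ the function $t\mapsto\varphi(\apathA(t))$ is monotone; hence any root half-plane containing both endpoints $\apathA(0)$ and $\apathA(\sRR)$ contains the entire image of $\apathA$. Therefore $\apathA([0,\sRR])\subset\f^{-1}(\App\cap\App')$, that is $\apath\subset\App$. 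Equivalently, the image of $\apathA$ lies in the parallelogram $\Csegm{\apathA(0)}{\apathA(\sRR)}=(\apathA(0)+\Cb)\cap(\apathA(\sRR)-\Cb)$, and $\App\cap\App'$ always contains this set.
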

\subsubsection*{A local criterion}

We say that two directions in $\TangS_\xE\Es$ are 
{\em $\Cc$-opposite} 
if they are contained in opposite closed chambers of $\TangS_\xE\Es$.
For $\yE\neq\xE$ in $\Es$, 
we denote by $\fac_\xE(\yE)$ 
the minimal closed simplex  of  $\TangS_\xE\Es$  
containing $\TangS_{\xE}\yE$.

\begin{prop}
\label{prop- critere 3 points et facettes croissantes}
Let $\xE,\yE,\zE\in\Es$, with $\yE\neq\xE,\zE$.
The following are equivalent:
  \begin{enumerate}

  \item  The path ($\xE$, $\yE$, $\zE$) is $\Cc$-geodesic ;

\item The directions $\TangS_{\yE}\xE$ and  $\TangS_\yE\zE$ 
are $\Cc$-opposite in $\TangS_\yE\Es$. 
  \end{enumerate}
Then $\xE\neq \zE$ and  $\TangS_{\xE}(\yE)$ belongs to $\fac_\xE(\zE)$.
\end{prop}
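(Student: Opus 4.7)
The plan is to prove the equivalence by translating each condition into a statement about vectors in the model flat $\Aa$, using the definition of $\Cc$-geodesicity as existence of a marked flat in which the two segment velocities lie in $\Cb$. For the implication $(1) \Rightarrow (2)$, I would start from a marked flat $\f:\Aa\to\Es$ containing the path, with preimages $\xA,\yA,\zA\in\Aa$ satisfying $\yA-\xA\in\Cb$ and $\zA-\yA\in\Cb$. Then the direction $\TangS_\yE\xE$ corresponds via $\f$ to the unit vector of $\xA-\yA\in-\Cb$ and $\TangS_\yE\zE$ to the unit vector of $\zA-\yA\in\Cb$. Since $\Cb\cap\bord\Aa$ and $-\Cb\cap\bord\Aa$ are opposite closed chambers of the apartment $\TangS_\yE \f(\Aa)$ of the spherical building $\TangS_\yE\Es$, the two directions are $\Cc$-opposite by definition.

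For $(2) \Rightarrow (1)$, assume $\TangS_\yE\xE$ and $\TangS_\yE\zE$ are contained in opposite closed chambers of $\TangS_\yE\Es$. Since opposite chambers in a spherical $\typeA_2$-building lie in a common apartment, these chambers are contained in some $\TangS_\yE\App$ with $\App$ a flat through $\yE$. The key step, which I expect to be the main obstacle, is to upgrade the fact that germs at $\yE$ lie in $\App$ to the statement that the whole segments $[\yE,\xE]$ and $[\yE,\zE]$ lie in $\App$; this follows from the fact that flats in a Euclidean building are closed convex subspaces (hence totally geodesic) and from uniqueness of geodesics in the CAT(0) space $\Es$, so the geodesic ray from $\yE$ in the given direction, computed inside $\App$, coincides with the geodesic ray in $\Es$. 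In particular $\xE,\zE\in\App$. Choosing a marking $\f:\Aa\to\App$ and adjusting it by an element of $\W$ (allowed by axiom (A1)) so that the corresponding preimages satisfy $\yA-\xA\in\Cb$ and $\zA-\yA\in\Cb$, we conclude that $(\xE,\yE,\zE)$ is $\Cc$-geodesic.

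The final assertions then follow quickly. For $\xE\neq\zE$: opposite closed chambers of a spherical building of type $\typeA_2$ are antipodal (at angular distance $\pi$) and therefore disjoint, so $\TangS_\yE\xE\neq\TangS_\yE\zE$, forcing $\xE\neq\zE$ since $\yE\neq\xE,\zE$. For $\TangS_\xE\yE\in\fac_\xE(\zE)$: in the common flat, one has $\zA-\xA=(\yA-\xA)+(\zA-\yA)\in\Cb$ as a sum of two vectors in the convex cone $\Cb$. If $\zA-\xA$ is regular then $\fac_\xE(\zE)$ is the whole closed chamber of $\TangS_\xE\App$ corresponding to $\Cb\cap\bord\Aa$, which visibly contains $\TangS_\xE\yE$. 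If $\zA-\xA$ is singular, say on the face $\{\rac_\indN=0\}$, then applying the form $\rac_\indN$ (nonnegative on $\Cb$, vanishing only on this boundary ray) to $\zA-\xA=(\yA-\xA)+(\zA-\yA)$ forces both summands onto the same one-dimensional ray $\Cb\cap\{\rac_\indN=0\}$, whence $\TangS_\xE\yE=\TangS_\xE\zE$, which is precisely the vertex $\fac_\xE(\zE)$.
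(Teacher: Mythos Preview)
Your argument for $(1)\Rightarrow(2)$ and for the final assertions is correct and matches the paper's (implicit) reasoning. The gap is in $(2)\Rightarrow(1)$, precisely at the step you flagged as the main obstacle. The claim that the whole segment $[\yE,\xE]$ lies in $\App$ just because its germ at $\yE$ lies in $\TangS_\yE\App$ is false: uniqueness of geodesics in a $\CAT(0)$ space applies to geodesics between two prescribed endpoints, not to geodesic rays with a prescribed initial germ. Already in an $\RR$-tree one sees this fail: take a line $\App$ through $\yE$ and a point $\xE$ on a branch that follows $\App$ for a while past $\yE$ and then leaves it; then $\TangS_\yE\xE\in\TangS_\yE\App$ but $\xE\notin\App$. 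So choosing $\App$ from the spherical apartment at $\yE$ alone cannot work; you must build the flat so that it contains $\xE$ and $\zE$ from the start.

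The paper's route does exactly this, working with Euclidean Weyl chambers rather than germs. One picks a closed Weyl chamber $\Ch_-$ with vertex $\yE$ that actually contains $\xE$ (such a chamber exists in any flat through $\xE$ and $\yE$), and then uses the building property that a Weyl chamber and any further point lie in a common flat to produce a flat $\App$ containing $\Ch_-$ and $\zE$; the $\Cc$-oppositeness hypothesis then forces the Weyl chamber at $\yE$ in $\App$ containing $\zE$ to be opposite to $\Ch_-$, and one reads off the $\Cc$-geodesic property in $\App$. The one-line proof in the paper (``two opposite Weyl chambers at $\yE$ are contained in a flat'') is pointing at this mechanism: the flat is produced from opposite \emph{sectors}, chosen to contain $\xE$ and $\zE$, not merely from opposite germs.
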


\begin{proof}
This follows from the fact that two opposite Weyl chambers at $\yE$ are
contained in a flat.
\end{proof}

\begin{rema}
A key difficulty is that, unlike in the usual cases, 
a path may be locally $\Cc$-geodesic but not globally $\Cc$-geodesic,
even for arbitrary close deformations.
Easy examples can be found in products of two trees, taking 
 in any flat identified with $\RR\times\RR$ a ``$U$''-path: for
 instance the piecewise affine path with successive 
vertices $\xE_0=(0,1)$, $\xE_1=(0,0)$, $\xE_2=(1,0)$, $\xE_3=(1,1)$.
In Euclidean buildings of type $\typeA_2$, an example is the piecewise affine
path $\apath$ in $\Aa$ with vertices $\xE_0=\classA{(-1,2,-1)}$,
$\xE_1=0$, $\xE_2=\classA{(2,-1,-1)}$ and $\xE_3=\classA{(3,0,-3)}$,
which is a local $\Cc$-geodesic but not globally $\Cc$-geodesic
 (see Figure \ref{fig- local but not glob Cgeod}).
\begin{figure}[h]
  \includegraphics[scale=0.8]{figures/Aa_local_but_not_global_Cgeod.fig} 

  \caption{A local, but not global, $\Cc$-geodesic in $\Aa$.}
\label{fig- local but not glob Cgeod}
\end{figure}
This phenomenon makes it hard to prove global preservation of the
$\Cc$-distance for maps between subset of Euclidean buildings, 
since it is not enough to check it locally.
\end{rema}

\subsubsection*{A local to global criterion}
For piecewise regular $\Cc$-geodesic paths, 
we have the following  fundamental local-to-global property:
 
\begin{coro}
Let $(\xE_{\indPath})_\indPath$ be a  (finite or not) sequence  in $\Es$.
Suppose that for all $\indPath$ 
the segment $[\xE_{\indPath},\xE_{\indPath+1}]$ is regular,
and 
the path $(\xE_{\indPath-1}$, $\xE_{\indPath}$, $\xE_{\indPath+1})$ is $\Cc$-geodesic.
Then the whole path $(\xE_{\indPath})_\indPath$ is $\Cc$-geodesic.
\end{coro}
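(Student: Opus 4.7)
The plan is to prove by induction on $\indPath$ that there exists a single marked flat $\f:\Aa\to\Es$ and a piecewise affine path $\apathA$ in $\Aa$ with derivative in $\Cb$ almost everywhere, such that $\f\circ\apathA$ passes through $\xE_0,\xE_1,\ldots,\xE_\indPath$ in that order. The base case $\indPath=2$ is precisely the hypothesis that $(\xE_0,\xE_1,\xE_2)$ is a $\Cc$-geodesic.

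For the inductive step, suppose $(\xE_0,\ldots,\xE_\indPath)$ lies in a flat $\App$ as a $\Cc$-geodesic. By hypothesis the triple $(\xE_{\indPath-1},\xE_\indPath,\xE_{\indPath+1})$ is also a $\Cc$-geodesic, so it lies in some flat $\App'$. I would then argue that $\App=\App'$ using the regularity of the segment $[\xE_{\indPath-1},\xE_\indPath]$. More precisely, because this segment is regular, Proposition \ref{prop- critere 3 points et facettes croissantes} (applied inside the already-known $\Cc$-geodesic in $\App$ and inside $\App'$) shows that the directions $\TangS_{\xE_\indPath}\xE_{\indPath-1}$ and $\TangS_{\xE_\indPath}\xE_{\indPath+1}$ are $\Cc$-opposite regular directions at $\xE_\indPath$; together they define a germ of regular line at $\xE_\indPath$. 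Such a germ extends to a unique bi-infinite regular geodesic line through $\xE_\indPath$, and in a Euclidean building a regular geodesic line is contained in a unique flat. Hence the flat containing a small segment around $\xE_\indPath$ of this line is uniquely determined, which forces $\App'=\App$ in a neighborhood of $\xE_\indPath$, and thus, since flats are convex, $\App'=\App$ globally (or at least they share the portion $[\xE_{\indPath-1},\xE_{\indPath+1}]$, which is all we need). In particular $\xE_{\indPath+1}\in\App$.

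Once $\xE_{\indPath+1}$ is in $\App$, pulling back by the marked flat $\f$ we obtain an extension of $\apathA$ to $\xE_{\indPath+1}$. The derivative on the new piece $[\xE_\indPath,\xE_{\indPath+1}]$ lies in $\Cb$ because $(\xE_{\indPath-1},\xE_\indPath,\xE_{\indPath+1})$ is a $\Cc$-geodesic in $\App'=\App$: writing this $\Cc$-geodesic inside $\f^{-1}(\App)=\Aa$, both successive edges have derivative in $\Cb$. This concludes the inductive step, giving that the finite initial paths $(\xE_0,\ldots,\xE_\indPath)$ are $\Cc$-geodesics for all $\indPath$. For infinite sequences, the conclusion then follows since being a $\Cc$-geodesic is a property that can be checked on every compact subinterval: having all finite initial paths lie in the same flat $\App$ (by uniqueness at each step) implies the whole path lies in $\App$ with derivative in $\Cb$ almost everywhere.

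The main obstacle is the identification $\App=\App'$ across consecutive triples: it is here that regularity of the segments is essential, since without regularity the directions at the middle point would only determine a simplex (a face) rather than a chamber, and one could have genuinely different flats agreeing on a singular segment, as illustrated by the ``$U$''-path counterexamples discussed in the preceding remark. Regularity of $[\xE_\indPath,\xE_{\indPath+1}]$ is used to get a regular germ on both sides of $\xE_\indPath$ inside $\App$ and inside $\App'$, and the uniqueness of the flat containing a regular line is what glues the local flats $\App,\App'$ into a single global flat.
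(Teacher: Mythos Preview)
Your inductive scheme is the right shape, but the key step---deducing $\App=\App'$ (equivalently $\xE_{\indPath+1}\in \App$)---does not go through as written. The problem is that ``$\Cc$-opposite'' does not mean ``opposite'': two regular directions lying in opposite closed chambers of $\TangS_{\xE_\indPath}\Es$ need not be antipodal, so $[\xE_{\indPath-1},\xE_\indPath]\cup[\xE_\indPath,\xE_{\indPath+1}]$ is in general a genuine corner, not a germ of a regular geodesic line. What the two opposite chambers \emph{do} determine uniquely is an apartment in the spherical building $\TangS_{\xE_\indPath}\Es$, namely $\TangS_{\xE_\indPath}\App'$; but there is no reason this should equal $\TangS_{\xE_\indPath}\App$. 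The flat $\App$ is only known to contain the chamber $\fac_{\xE_\indPath}(\xE_{\indPath-1})$, not the particular opposite chamber carrying $\TangS_{\xE_\indPath}\xE_{\indPath+1}$. In a thick building one can always choose $\xE_{\indPath+1}$ in a Weyl chamber opposite to that of $\xE_{\indPath-1}$ at $\xE_\indPath$ but branching away from $\App$, so the claimed inclusion $\xE_{\indPath+1}\in \App$ is false in general.

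The paper's (implicit) argument---spelled out for the regular case inside the proof of Proposition~\ref{prop- critere Cgeod avec sing}---avoids this by not trying to keep the same flat across the induction. From the facet inclusion $\fac_{\xE_\indPath}(\xE_{\indPath-1})\subset\fac_{\xE_\indPath}(\xE_0)$ (last assertion of Proposition~\ref{prop- critere 3 points et facettes croissantes}, applied along the $\Cc$-geodesic $(\xE_0,\ldots,\xE_\indPath)$) together with regularity of $[\xE_{\indPath-1},\xE_\indPath]$, which makes $\fac_{\xE_\indPath}(\xE_{\indPath-1})$ a maximal simplex, one gets $\fac_{\xE_\indPath}(\xE_0)=\fac_{\xE_\indPath}(\xE_{\indPath-1})$. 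Since $\TangS_{\xE_\indPath}\xE_{\indPath+1}$ is $\Cc$-opposite to this chamber, it is $\Cc$-opposite to $\TangS_{\xE_\indPath}\xE_0$ as well, so $(\xE_0,\xE_\indPath,\xE_{\indPath+1})$ is a $\Cc$-geodesic and lies in some flat $\App''$. Now the preceding proposition (any flat containing the endpoints of a $\Cc$-geodesic contains that $\Cc$-geodesic) places all of $\xE_0,\ldots,\xE_\indPath$ inside $\App''$, and the induction closes with $\App''$ replacing $\App$.
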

We now state a criterion 
for a general locally $\Cc$-geodesic piecewise affine path
to be $\Cc$-geodesic, 
which will be used in the proof of the main theorem 
(Section \ref{ss- proof  global Cisom}).

\begin{prop}
\label{prop- critere Cgeod avec sing}
Suppose that $\dim \Aa=2$.
Let $(\xE_{\indPath})_\indPath$ be a  (finite or not) sequence  in $\Es$, such that
for all $\indPath$ the point $\xE_{\indPath}$ is not in the segment 
$[\xE_{\indPath-1},\xE_{\indPath+1}]$.
Suppose that: 
\begin{enumerate}
\item (local $\Cc$-geodesic)
\label{it- hyp local Cgeod}
For all $\indPath$ the directions
$\TangS_{\xE_{\indPath}}\xE_{\indPath-1}$ and  $\TangS_{\xE_{\indPath}}\xE_{\indPath+1}$ 
are $\Cc$-opposite in $\TangS_{\xE_{\indPath}}\Es$.
\item For all $\indPath$ such that $[\xE_{\indPath-1},\xE_{\indPath}]$ is singular, 
$\TangS_{\xE_{\indPath}}\xE_{\indPath-2}$ and  $\TangS_{\xE_{\indPath}}\xE_{\indPath+1}$ 
are $\Cc$-opposite in $\TangS_{\xE_{\indPath}}\Es$.
\end{enumerate}
Then $(\xE_{\indPath})_\indPath$ is $\Cc$-geodesic.
\end{prop}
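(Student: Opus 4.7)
The plan is to argue by induction on the length $\N$ of the sequence; the infinite case reduces to its finite prefixes since ``lying in a common flat with tangent in $\Cb$'' is checked finitely. The base cases $\N\le 3$ are exactly Proposition \ref{prop- critere 3 points et facettes croissantes}. For the inductive step, suppose $(\xE_0,\dots,\xE_{\N-1})$ is $\Cc$-geodesic and lies in a flat $\App=\f(\Aa)$ whose pullback path in $\Aa$ has tangent a.e.\ in $\Cb$. We want to show $\xE_\N\in\App$ and that the new tangent lies in $\Cb$.

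The case analysis turns on the nature of the incoming segment $[\xE_{\N-2},\xE_{\N-1}]$. If it is \emph{regular}, its germ at $\xE_{\N-1}$ lies in the interior of a Weyl chamber of the spherical apartment $\TangS_{\xE_{\N-1}}\App$. By hypothesis (i), $\TangS_{\xE_{\N-1}}\xE_\N$ is $\Cc$-opposite to it, hence lies in the closed opposite chamber, which is then an entire Weyl chamber of $\TangS_{\xE_{\N-1}}\App$. The geodesic $[\xE_{\N-1},\xE_\N]$ therefore starts in a direction tangent to $\App$, and by convexity of the flat it stays inside $\App$; the new tangent in $\Aa$ lies in $\Cb$ after applying the appropriate Weyl element.

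If instead $[\xE_{\N-2},\xE_{\N-1}]$ is \emph{singular}, its germ at $\xE_{\N-1}$ lies on a wall of $\TangS_{\xE_{\N-1}}\App$, and the $\Cc$-opposite closed chamber of hypothesis (i) is no longer fully contained in $\TangS_{\xE_{\N-1}}\App$: this is exactly the failure mode illustrated by Figure~\ref{fig- local but not glob Cgeod}. Here hypothesis (ii) intervenes: the triple $(\xE_{\N-3},\xE_{\N-1},\xE_\N)$ is $\Cc$-geodesic, so $\TangS_{\xE_{\N-1}}\xE_\N$ is $\Cc$-opposite to $\TangS_{\xE_{\N-1}}\xE_{\N-3}$. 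Inside $\App$, the backward direction from $\xE_{\N-1}$ to $\xE_{\N-3}$ is, in the chart $\Aa$, the negative of the sum of the two preceding tangent vectors, both of which lie in $\Cb$; such a sum is regular unless both previous segments are singular along the same wall. If the composed diagonal $[\xE_{\N-3},\xE_{\N-1}]$ is regular, we apply the regular case with $\xE_{\N-3}$ in place of $\xE_{\N-2}$ to conclude $\xE_\N\in\App$; if it is still singular, we iterate hypothesis (ii) at $\indPath=\N-3$, jumping back two indices.

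The main obstacle is to control this backward iteration and check that the new tangent ends up in $\Cb$. Termination is automatic: each step jumps back two indices, so after finitely many steps we either obtain a regular composed diagonal or reach $\xE_0$ (or $\xE_1$), where the iteration is vacuous. The skipped intermediate vertices remain inside $\App$ by the inductive hypothesis, and the accumulated diagonal tangent $\vA^{(1)}+\dots+\vA^{(k)}\in\Cb$ either becomes regular (triggering the regular-case conclusion) or stays singular only when all summands lie on the same wall, in which case one checks directly that the constrained direction of $\TangS_{\xE_{\N-1}}\xE_\N$ forces the last tangent vector in $\Aa$ to lie in $\Cb$ along that wall. Once $\xE_\N\in\App$ and the tangent is in $\Cb$, additivity of $\Cd$ along the extended path in the flat closes the induction.
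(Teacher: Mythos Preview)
Your inductive setup has a genuine gap. In both the regular and singular cases you assume that once $\TangS_{\xE_{\N-1}} \xE_\N$ is $\Cc$-opposite to a direction lying in $\TangS_{\xE_{\N-1}} \App$, it must lie in the opposite chamber \emph{inside the apartment} $\TangS_{\xE_{\N-1}} \App$, and hence that $[\xE_{\N-1},\xE_\N]$ stays in the same flat $\App$. But in the spherical building $\TangS_{\xE_{\N-1}} \Es$ a given chamber typically has many opposite chambers, only one of which lies in $\TangS_{\xE_{\N-1}} \App$; hypothesis (i) only guarantees that the new direction lies in \emph{some} opposite closed chamber. So you cannot conclude $\xE_\N \in \App$, and ``convexity of the flat'' does not help. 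This branching is precisely the mechanism behind the failure of local-to-global $\Cc$-geodesicity noted just before the proposition. Your backward iteration in the singular case inherits the same problem, and in addition hypothesis (ii) is only stated for edges $[\xE_{\indPath-1},\xE_\indPath]$ of the original path, not for diagonals, so ``iterating hypothesis (ii) at $\indPath=\N-3$'' does not give information at the vertex $\xE_{\N-1}$.

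The paper's proof avoids tracking a fixed flat. It uses the facet-inclusion part of Proposition~\ref{prop- critere 3 points et facettes croissantes} to control $\fac_{\xE_\indPath}(\xE_0)$ inside the spherical building at $\xE_\indPath$. The key observation is that the diagonal $[\xE_\indPath,\xE_{\indPath-2}]$ is automatically \emph{regular}: since $\xE_{\indPath-1}\notin[\xE_{\indPath-2},\xE_\indPath]$ and $\dim\Aa=2$, the sum of the two incoming $\Cb$-vectors cannot be singular (this is exactly your ``sum is regular unless both lie on the same wall'', and that excluded case is ruled out by the non-collinearity hypothesis). Hence $\fac_{\xE_\indPath}(\xE_{\indPath-2})$ is already a full closed chamber and therefore equals $\fac_{\xE_\indPath}(\xE_0)$. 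Hypothesis (i) in the regular case, or hypothesis (ii) directly in the singular case, then gives that $\TangS_{\xE_\indPath}\xE_{\indPath+1}$ is $\Cc$-opposite to $\TangS_{\xE_\indPath}\xE_0$. So $(\xE_0,\xE_\indPath,\xE_{\indPath+1})$ is $\Cc$-geodesic, and any flat containing these three contains the whole path $(\xE_0,\ldots,\xE_\indPath)$ by the property that a flat through the endpoints of a $\Cc$-geodesic contains it. No backward iteration is needed.
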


Note that all involved directions are well defined, 
since we have 
$\xE_\indPath\neq \xE_{\indPath-1},\xE_{\indPath+1}$ for all
$\indPath$, 
and hypothesis (\ref{it- hyp local Cgeod}) implies that
$\xE_{\indPath-1} \neq \xE_{\indPath+1}$ for all $\indPath$.

\begin{proof}
Suppose that  $(\xE_{0}, \xE_{1},\ldots, \xE_{\indPath})$ 
is $\Cc$-geodesic for some $\indPath \geq 2$.
In the spherical building $\TangS_{\xE_\indPath}\Es$ of directions at
$\xE_\indPath$,
Proposition \ref{prop- critere 3 points et facettes croissantes}
implies the following inclusions of simplices:
$\fac_{\xE_\indPath}(\xE_{\indPath-1})
\subset \fac_{\xE_\indPath}(\xE_{\indPath-2})
\subset \fac_{\xE_\indPath}(\xE_{0})$. 
Note that, since $\xE_{\indPath-1}$ is not in $[\xE_{\indPath},\xE_{\indPath-2}]$,
the segment $[\xE_{\indPath},\xE_{\indPath-2}]$ is necessarily regular, hence 
$\fac_{\xE_\indPath}(\xE_{\indPath-2})$ 
is a closed chamber (i.e. a maximal simplex),
and then
$\fac_{\xE_\indPath}(\xE_{\indPath-2})=\fac_{\xE_\indPath}(\xE_{0})$.

If the segment $[\xE_{\indPath-1},\xE_{\indPath}]$ is regular, 
then
$\fac_{\xE_\indPath}(\xE_{\indPath-1})=\fac_{\xE_\indPath}(\xE_{\indPath-2})=\fac_{\xE_\indPath}(\xE_{0})$.
By hypothesis $\TangS_{\xE_\indPath}\xE_{\indPath+1}$ is in a closed chamber
opposite to the closed chamber $\fac_{\xE_\indPath}(\xE_{\indPath-1})=\fac_{\xE_\indPath}(\xE_{0})$,
hence $\TangS_{\xE_\indPath}\xE_0$ is $\Cc$-opposite to $\TangS_{\xE_\indPath}\xE_{\indPath+1}$.

If the segment $[\xE_{\indPath-1},\xE_{\indPath}]$ is singular,
then by hypothesis $\TangS_{\xE_\indPath}\xE_{\indPath+1}$ is in a closed chamber
opposite to the closed chamber
$\fac_{\xE_\indPath}(\xE_{\indPath-2})=\fac_{\xE_\indPath}(\xE_{0})$, hence
 $\TangS_{\xE_\indPath}\xE_0$ is also $\Cc$-opposite to $\TangS_{\xE_\indPath}\xE_{\indPath+1}$.
 
Then in all cases $\xE_{0}$, $\xE_{\indPath}$, $\xE_{\indPath+1}$ is $\Cc$-geodesic
(Proposition \ref{prop- critere 3 points et facettes croissantes}),
and it follows that $(\xE_{0}, \xE_{1},\ldots, \xE_{\indPath+1})$
 is $\Cc$-geodesic.
\end{proof}

\section{Fock-Goncharov parameters for surface group representations}
\label{s- FG parametrization}
In this section, following Fock and Goncharov \cite{FoGoSPC}, 
we explain in detail how to build representations of a
punctured surface group in $\PGL_3(\KK)$ for any field $\KK$ using
ideal triangulations and projective geometry. The goal is to define
the representation $\rho_{\FGp}$ associated with a {\em FG-parameter}
$\FGp= ((\Z_\tau)_\tau,(\Ep_\e)_\e)$.
Note that our edge parameters $\Ep_\e$ 
are in fact a slight modification of those in
 \cite{FoGoSPC}, and are more symmetric with respect to natural
 point-line duality (see \S \ref{ss- relation with usual FG} for the
 precise relationship).

In this section,  $\KK$ is any field and $\PP=\PP(\KK^3)$.

\subsection{Surfaces and ideal triangulations}
\label{ss- prelim surf and ideal triang}
Consider a compact oriented connected surface  $\Sf$ 
with non empty boundary 
and negative Euler characteristic $\chi(\Sf)<0$. 
Boundary components
of $\Sf$ are oriented in such a way that  
the surface lies to their right. 
They will also be  seen  as punctures.
Let $\Ga=\pi_1(\Sf)$ be the fundamental group of $\Sf$.
We denote by  $\Farey{\Sf}$ 
the {\em Farey set} of $\Sf$,
which may be defined as the set of boundary components of the universal cover
$\Sft$ of $\Sf$ (see \cite[\S 1.3]{FoGoIHES}). 
This set inherits a cyclic order from the orientation of the surface.
For each $\i\in \Farey{\Sf}$, 
we denote by $\ga_\i$ the
corresponding element of $\Ga$, i.e the primitive element 
translating the boundary component $\i$ in the positive direction.
Then for the induced order on $\Farey{\Sf} -\{\i\}$, we have
$\ga_\i(\j) > \j$ for all $\j \neq \i$.
The fundamental group $\Ga=\pi_1(\Sf)$ acts on the Farey set
$\Farey{\Sf}$, and $\ga_\i$ fixes $\i$ for each $\i\in\Farey{\Sf}$.
Let $\T$ be an ideal triangulation of $\Sf$, 
i.e
a triangulation  with vertices the boundary components,
considered as punctures.
We denote by  
$\Triangles(\T)$ the set of  triangles of $\T$ 
and by
$\orEdges(\T)$ the set of oriented edges of $\T$.
Lift $\T$ to an ideal triangulation $\Tt$ of the universal cover $\Sft$ of $\Sf$.
 The set of vertices of $\Tt$ 
then identifies with the Farey set $\Farey{\Sf}$ of $\Sf$.
We will identify the oriented  edges $\e$ of $\Tt$ with the corresponding 
pairs $(\i,\j)$ of points in $\Farey{\Sf}$ (vertices of $\e$).
A {\em marked}  triangle of $\Tt$ is a triple
$(\i,\j,\k)$ of points in $\Farey{\Sf}$ that are the common vertices
of a triangle of $\Tt$.
\subsection{Cross ratio} 
\label{ss- algebraic cross ratio}
We use the following convention for cross ratios (following
Fock-Goncharov \cite{FoGoSPC}).
When $\V$ is a two dimensional vector space over a field $\KK$, 
the cross ratio of a four points $\aK_1,\aK_2,\aK_3,\aK_4$  
in the projective line $\PV$
is defined by
\begin{equation}
\label{eq- def cross ratio}  
\Bir(\aK_1,\aK_2,\aK_3,\aK_4)
=\frac{(\aK_1 -\aK_2 )(\aK_3 -\aK_4 )}
      {(\aK_1 -\aK_4 )(\aK_2 -\aK_3 )}
\end{equation}
in any affine chart $\PV\isomto \KK\cup\{\infty\}$,  
that is in order that
$\Bir(\infty,-1,0,\aK)=\aK$.
It is well-defined (in $\KK\cup\{\infty\}$) 
when the quadruple is \nondegeneratedQ{},
i.e. when either the numerator or the denominator is nonzero
(see section \ref{s- nondegeneratedQ}).

We now recall the natural symmetries.
For a permutation $\sigma$ in  $\Sym_4$,
we denote
$$(\sigma\cdot \Bir)(\aK_1, \aK_2, \aK_3,\aK_4) 
=\Bir(\aK_{\sigma(1)},
\aK_{\sigma(2)},\aK_{\sigma(3)},\aK_{\sigma(4)})\;.$$
Recall that 
$\sigma\cdot\Bir=\Bir$ 
when $\sigma$ is any the double transpositions,
that $\sigma\cdot\Bir=\Bir^{-1}$ 
when $\sigma$ is $(13)$, $(24)$, $(1234)$ or $(1432)$ ;
and that 
\label{ss- sym bir 1342}
$(234)\cdot\Bir=-(1+\Bir^{-1})$
and
\label{ss- sym bir 1423}
$(243)\cdot\Bir=-(1+\Bir)^{-1}$.
The cocycle identity is 
\begin{equation}
\label{eq- cross ratio - cocycle identity}
-\Bir(\aK_1, \aK_2, \aK_3,\aK_4)\Bir(\aK_1, \aK_4,\aK_3,\aK_5)=
\Bir(\aK_1, \aK_2, \aK_3,\aK_5)
\end{equation}

\subsection{Triple ratio of a triple of flags} 
\label{s- triple ratio}
We refer the reader to \cite[\S 9.4 p128]{FoGoIHES}.
Let $\F_\ii=(\p_\ii,\D_\ii)$, $\ii=1,2,3$, be 
a  triple of flags in $\PP=\PP(\KK^3)$.
The {\em triple  ratio} of the triple $(\F_1,\F_2,\F_3)$
is defined   by
\[\Tri(\F_1,\F_2,\F_3)
= \frac{\Dt_1(\pt_2)\Dt_2(\pt_3)\Dt_3(\pt_1)}{\Dt_1(\pt_3)\Dt_2(\pt_1)\Dt_3(\pt_2)}\] 
where $\pt_\ii$ is any vector in $\KK^3$ representing $\p_\ii$ and
$\Dt_\ii$ is any linear form in $(\KK^3)^*$ representing $\D_\ii$.
It is well defined (in $\KK\cup\{\infty\}$) when the triple $(\F_1,\F_2,\F_3)$
is \nondegeneratedTF{}, i.e. when either the numerator or denominator are
nonzero (see section \ref{s- nondegeneratedTF}).
Note that
$\Tri(\F_1,\F_2,\F_3) = \infty $ if and only if 
there exists $\ii$ such that $\p_\ii \in \D_{\ii+1}$ 
and that
$\Tri(\F_1,\F_2,\F_3) = 0$ if and only if 
there exists $\ii$ such that $\p_\ii \in \D_{\ii-1}$.
In particular, the three flags are pairwise opposite 
if and only if 
their triple ratio is not $0$ or $\infty$.
The triple ratio is invariant under cyclic permutation of the flags:
and reversing the order inverses the triple ratio:
\begin{align*}
\Tri( \F_2, \F_3, \F_1 )&=\Tri(\F_1,\F_2,\F_3)\\
\Tri( \F_3, \F_2, \F_1 )&=\Tri(\F_1,\F_2,\F_3)^{-1} 
\;.
\end{align*}
The triple ratio may be expressed as 
the following  cross ratio on the naturally induced
quadruples of lines at $\p_1$ (which is \nondegeneratedQ{}, see
section\ref{s- nondegeneratedTF})

\begin{equation}
\label{eq- triple ratio as a cross ratio}
\Tri(\F_1,\F_2,\F_3)=\Bir(\D_1, \p_1 \p_2, \p_1 \p_{23}, \p_1 \p_3)
\end{equation}
or on the line $\D_1$
\begin{equation*}
  \Tri(\F_1,\F_2,\F_3)=\Bir(\D_1\cap \D_2, \D_1\cap \D_{23}, \D_1\cap \D_3, \p_1)
\end{equation*}

\begin{figure}[h]
\includegraphics{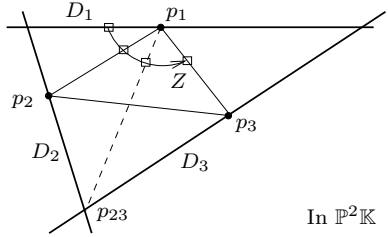}
\caption{The triple ratio $\Z=\Tri(\F_1,\F_2,\F_3)$ as a cross ratio.}
\end{figure}

Generic triples may be characterized by triple ratio:
$(\F_1,\F_2,\F_3)$ is generic 
if and only if 
$\Tri(\F_1,\F_2,\F_3)\neq \infty,0,-1$.
The triple ratio parametrize the generic triples of flags in the
projective plane, more precisely
for each $\aK\in\KKol$ there exists  a generic triple of flags
in $\PP$ with triple ratio $\aK$,
and $\PGL(\KK^3)$ acts $1$-transitively on the set of  generic triples of flags
of given triple ratio
(see also Lemma \ref{lemm- D_3 from triple ratio}).

\subsection{FG-invariants of a \transverse{} flag map}
\label{ss- invariants of Fdev}
Consider  a  {\em flag map}
$$\Fdev:\Farey{\Sf} \mapsto \MaxFlags(\PP)\;.$$
We denote by $\p_\i$ (resp. by $\D_\i$)
the point (resp. the line) of the flag $\F_\i=\Fdev(\i)$, 
for $\i\in\Farey{\Sf}$.
Let $\Tt$ an ideal triangulation  of $\Sft$.
We suppose that  $\Fdev$ and $\Tt$ are {\em \transverse}
that is 
that $\Fdev$ sends each triangle in $\Tt$ to a generic triple of flags.
We denote by $\p_\ij$ the point $\D_\i\cap \D_\j$
(resp. by $\D_\ij$ the line $\p_\i\p_\j$) (when defined).

To each  triangle $\tau$ of $\Tt$ 
with vertices $(\i, \j , \k)$ in $\Farey{\Sf}$, 
we  associate a {\em triangle invariant}:
the triple ratio 
\[\Z_\tau
=\Tri(\Fdev_\i, \Fdev_\j, \Fdev_\k)
= \Bir(\D_\i, \p_\i\p_\j, \p_\i\p_\jk, \p_\i\p_\k)
\]
of the triple of flags $\Fdev(\tau)$ 
(where $\i,\j,\k$ are  cyclically ordered 
accordingly to the orientation of the
surface).
It is well defined and  in $\KKol$ 
as  $\Fdev(\tau)$ is a generic triple of flags.
\label{ss- alg  edge invariants}
To each an oriented edge $\e=(\k,\i)$ in $\Tt$, 
we associate an {\em edge invariant}: the cross ratio
\[\Ep_\e
=\Bir
(\D_\i, \p_\i \p_\j, \p_\i \p_\k, \p_\i \p_\kl)
=\Bir
(\p_\k, \D_\k \cap \D_\l, \D_\k \cap \D_\i, 
\D_\k \cap \D_\ij)
\]
where $\i,\j,\k,\l$ in $\Farey{\Sf}$ are 
the vertices of  the two adjacent triangles
$\tau=(\i, \j , \k)$  and $\taup=(\k, \l , \i)$, 
cyclically ordered 
accordingly to the orientation of the surface 
(see figure \ref{fig- edge inv}).
Since $\Fdev(\tau)$ and $\Fdev(\taup)$ are generic, this is well defined and 
in $\KKo$. 

\begin{figure}[h]
\noindent{\hfill
\begin{minipage}{4cm}
\centering
\includegraphics{figures/adjacent_triangles.fig}\\
in $\Sft$
\end{minipage}
\hfill
\begin{minipage}{7cm}
\centering
\includegraphics[scale=0.8]{figures/PP2K_FG_quadruple_alter_edge_inv.fig}
\end{minipage}
\hfill }

  \caption{The invariant $\Ep_\e$ associated with an oriented edge
    $\e$.}
  \label{fig- edge inv}
\end{figure}

Note that the edge parameters are symmetric with respect to natural
duality,
as reversing the orientation of $\e$ (i.e. applying 
the half-turn $(\i\k)(\j\l)$) we get
\[\Ep_\eb
=\Bir
(\D_\k, \p_\k \p_{\l}, \p_\k \p_\i, \p_\k \p_\ij)\]
so exchanging the roles of points and lines 
correspond to exchange $\Ep_\e$ and $\Ep_\eb$.

Thus we have a well-defined  {\em FG-invariant} 
$$\FGp=((\Z_\tau)_{\tau}, (\Ep_\e)_{\e})$$ 
in $(\KKol)^{\Triangles(\Tt)} \times (\KKo)^{\orEdges(\Tt)}$
of the flag map $\Fdev$ {\em with respect to the triangulation $\Tt$}.

\subsection{Construction of flag maps from FG-parameters}
\label{s- construction of F}
We now show that 
FG-invariants $\FGp=((\Z_\tau)_\tau,(\Ep_\e)_\e)$ in 
$(\KKol)^{\Triangles(\Tt)} \times (\KKo)^{\orEdges(\Tt)}$ 
parametrize $\Tt$-\transverse{}  
flag maps $\Fdev:\Farey{\Sf}\to\MaxFlags(\PP)$ up to the action of
$\PGL(\KK^3)$.

Fix a base triangle $\tauo$ in the triangulation $\Tt$ with
(positively ordered) vertices $(\o_1,\o_2,\o_3)$ in $\Farey{\Sf}$.

\begin{prop}\cite{FoGoSPC}
\label{prop- existence unicite Fdev}
Let $\FGp=((\Z_\tau)_\tau,(\Ep_\e)_\e)$ be a {\em FG-parameter},
i.e. an element of
$(\KKol)^{\Triangles(\Tt)} \times (\KKo)^{\orEdges(\Tt)}$.
Fix a generic triple $\FFp=(\F_1,\F_2,\p_3)$, 
where $\F_1$, $\F_2$ are two flags in $\PP^2 \KK$ 
and $\p_3$ is a point in $\PP^2 \KK$.
There exists a unique map 
$\Fdev:\Farey{\Sf} \mapsto\MaxFlags(\PP)$,
\transverse{} to $\Tt$,
such that the FG-invariant 
of $\Fdev$ relatively to $\Tt$ is $\FGp$, 
and sending the points  $\o_1,\o_2$ to the flags $\F_1,\F_2$, and the
point $\o_3$ to  some flag through point $\p_3$.
\end{prop}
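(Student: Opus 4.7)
My plan is to construct $\Fdev$ inductively, triangle by triangle along the dual graph of $\Tt$. The key structural fact is that this dual graph is a tree: any dual cycle would bound a disk in $\Sft$ containing an interior vertex of $\Tt$, but all vertices of $\Tt$ lie in $\Farey{\Sf}$, at infinity. Consequently, from the base triangle $\tauo$ there is, for every other triangle $\tau$ of $\Tt$, a unique finite path of adjacent triangles reaching $\tau$, so local uniqueness at each propagation step will produce a globally well-defined flag map with no consistency condition to check.

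For the base triangle $\tauo=(\o_1,\o_2,\o_3)$, I set $\Fdev(\o_1)=\F_1$ and $\Fdev(\o_2)=\F_2$, and construct a line $\D_3$ through $\p_3$ such that the triple ratio $\Tri(\F_1,\F_2,(\p_3,\D_3))$ equals the prescribed $\Z_{\tauo}$. Using \eqref{eq- triple ratio as a cross ratio}, this triple ratio equals $\Bir(\D_1,\p_1\p_2,\p_1\p_{23},\p_1\p_3)$ with $\p_{23}=\D_2\cap\D_3$; by genericity of $(\F_1,\F_2,\p_3)$, the three lines $\D_1,\p_1\p_2,\p_1\p_3$ through $\p_1$ are pairwise distinct, and since the prescribed value $\Z_{\tauo}\in\KKol$ is finite, nonzero, and distinct from $-1$, Möbius uniqueness of the cross ratio in its fourth argument determines a unique fourth line $\p_1\p_{23}$ through $\p_1$ distinct from the other three, hence determines $\p_{23}$ and $\D_3=\p_3\p_{23}$ uniquely. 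That $\Z_{\tauo}\notin\{0,\infty,-1\}$ further ensures the resulting triple $(\F_1,\F_2,\F_3)$ is generic, initializing the induction.

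For the inductive step, let $\tau=(\i,\j,\k)$ be a triangle whose flags $\Fdev(\tau)$ have been constructed as a generic triple, and let $\taup=(\k,\l,\i)$ be the adjacent triangle across the oriented edge $\e=(\k,\i)$, with $\l$ the new vertex. The equation $\Ep_\e=\Bir(\D_\i,\p_\i\p_\j,\p_\i\p_\k,\p_\i\p_{\kl})\in\KKo$ uniquely determines the line $\p_\i\p_{\kl}$ and hence the point $\p_{\kl}=\D_\k\cap\D_\l$; symmetrically $\Ep_\eb\in\KKo$ uniquely determines the line $\p_\k\p_\l$ through $\p_\k$. This reduces $\F_\l$ to one residual degree of freedom — the position of $\p_\l$ on the known line $\p_\k\p_\l$, with $\D_\l=\p_\l\p_{\kl}$ — which is fixed by requiring $\Tri(\F_\k,\F_\l,\F_\i)=\Z_{\taup}\in\KKol$, again by Möbius uniqueness. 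The main technical obstacle is to verify at each such step that the output triple $(\F_\k,\F_\l,\F_\i)$ is again generic so the induction continues and transversality to $\Tt$ is maintained: this is precisely where the hypotheses $\Z_\tau\in\KKol$ and $\Ep_\e\in\KKo$ are used, since the excluded values $0,\infty,-1$ are exactly the degenerate cases in the Möbius equations above, and avoiding them is necessary and sufficient to keep the new flag in generic position with respect to the old pair $(\F_\k,\F_\i)$. Since the construction is forced at every step, existence and uniqueness of $\Fdev$ follow simultaneously.
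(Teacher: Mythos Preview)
Your proof is correct and follows essentially the same approach as the paper: both exploit that the dual graph of $\Tt$ is a tree to propagate inductively from $\tauo$, with the base step being exactly Lemma~\ref{lemm- D_3 from triple ratio} and the inductive step determining the new flag $\F_\l$ from the two edge invariants and the new triple ratio. The only cosmetic difference is that the paper pins down $\p_\l$ as the intersection of the two lines $\p_\k\p_\l$ and $\p_\i\p_\l$ (the latter read off from the triple ratio via the cyclic form $\Tri(\F_\i,\F_\k,\F_\l)=\Bir(\D_\i,\p_\i\p_\k,\p_\i\p_{\kl},\p_\i\p_\l)$), whereas you invoke M\"obius uniqueness directly; unwinding your claim gives precisely that intersection. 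The paper does spell out the genericity verifications for the new triple in detail, which you correctly flag as the technical obstacle but leave as an exercise.
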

In order to normalize, 
we will denote by 
$\FdevZ$
the flag  map $\Fdev$ with FG-invariant $\FGp$ such that 
the triple $\FdevZ(\tauo)=(\F_1,\F_2,\F_3)$ is in {\em canonical
  form}, 
that is
 $\p_1=[1:0:0]$, $\p_2=[0:1:0]$,  $\D_1\cap \D_2=[0:0:1]$, $\p_3=[1:1:1]$ 
is the canonical projective frame.

\begin{proof}
  Since the dual graph of the
  triangulation $\Tt$ has no cycle (ie, is a tree), existence and unicity
  of $\Fdev$ comes from the following basic facts,
  by induction on adjacent triangles. 

  \begin{lemm}
    \label{lemm- D_3 from triple ratio}
    Let $\F_1=(\p_1,\D_1)$, $\F_2=(\p_2,\D_2)$ be two flags in
    $\PP$ 
    and $\p_3$ be a point in $\PP$.
    Suppose that $\F_1, \F_2$ and $\p_3$ are in generic position.
    Let $\aK\in\KKol$. 
    Then there exists a unique flag $\F_3=(\p_3,\D_3)$
    such that
    the  triple of flags $(\F_1,\F_2,\F_3)$ is generic 
    and 
    $\Tri(\F_1,\F_2,\F_3)=\aK$.
    \qed\end{lemm}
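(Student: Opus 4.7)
The plan is to invert the pencil expression
\[
\Tri(\F_1,\F_2,\F_3)=\Bir(\D_1,\p_1\p_2,\p_1\p_{23},\p_1\p_3)
\]
from (\ref{eq- triple ratio as a cross ratio}), viewing it as a cross ratio of four lines of the pencil at $\p_1$. With $\F_1,\F_2,\p_3$ fixed, the lines $\D_1$, $\p_1\p_2$, $\p_1\p_3$ through $\p_1$ are already determined, while $\p_1\p_{23}$ depends on the unknown $\D_3$ via $\p_{23}=\D_2\cap\D_3$. So I will first use the cross ratio to recover the line $\p_1\p_{23}$, and then recover $\D_3$ geometrically.

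First I would check that the three lines $\D_1$, $\p_1\p_2$, $\p_1\p_3$ are pairwise distinct: $\D_1\neq\p_1\p_2$ because $\p_2\notin\D_1$ (the flags $\F_1,\F_2$ are opposite), $\D_1\neq\p_1\p_3$ because $\p_3\notin\D_1$, and $\p_1\p_2\neq\p_1\p_3$ because $\p_1,\p_2,\p_3$ are not collinear. They therefore form a projective frame in the pencil at $\p_1$, and the prescribed value $\aK\in\KKol$ singles out a unique fourth line $\ell$ in that pencil with $\Bir(\D_1,\p_1\p_2,\ell,\p_1\p_3)=\aK$. In an affine chart where $\D_1,\p_1\p_3,\p_1\p_2$ correspond to $\infty,0,-1$, the cross ratio reads $\ell=\aK$, so the three forbidden cases $\ell=\D_1$, $\ell=\p_1\p_3$, $\ell=\p_1\p_2$ correspond to $\aK=\infty,0,-1$; these are all excluded by the hypothesis $\aK\in\KKol\subset\KK$, so $\ell$ is distinct from each of the three reference lines.

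Next I would reconstruct $\F_3$ from $\ell$. The line $\D_2$ does not pass through $\p_1$ (the flags $\F_1,\F_2$ are opposite), hence $\ell\neq\D_2$, and the point $\q:=\ell\cap\D_2$ is well defined; moreover $\q\neq\p_3$ because $\q\in\D_2$ while $\p_3\notin\D_2$. Setting $\D_3:=\p_3\q$, we obtain a flag $\F_3=(\p_3,\D_3)$ with $\D_2\cap\D_3=\q$ and $\p_1\p_{23}=\p_1\q=\ell$, so (\ref{eq- triple ratio as a cross ratio}) gives $\Tri(\F_1,\F_2,\F_3)=\aK$. That the triple is generic then follows from the characterization recalled in Section \ref{s- triple ratio}: $\aK\in\KKol$ means exactly $\Tri(\F_1,\F_2,\F_3)\notin\{0,\infty,-1\}$.

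For uniqueness, any other flag $\F_3'=(\p_3,\D_3')$ producing a generic triple with $\F_1,\F_2$ of triple ratio $\aK$ yields, via (\ref{eq- triple ratio as a cross ratio}), a line $\p_1(\D_2\cap\D_3')$ in the pencil at $\p_1$ whose cross ratio with $\D_1$, $\p_1\p_2$, $\p_1\p_3$ equals $\aK$; by the uniqueness of $\ell$ in the pencil this line must be $\ell$, which forces $\D_2\cap\D_3'=\q$ and hence $\D_3'=\p_3\q=\D_3$. I do not foresee any serious obstacle: the only delicate point is making sure that the excluded cross ratio values $0,\infty,-1$ correspond precisely to the three degeneracies of a triple of flags, which is exactly the content of the condition $\aK\in\KKol$.
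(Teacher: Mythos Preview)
The paper records this lemma with a bare \qed{} and gives no argument, so there is nothing to compare against; your approach via the pencil at $\p_1$ is the natural one and is essentially correct.

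Two small points are worth tightening. First, in your affine chart the identification of forbidden values is slightly off: with $\D_1,\p_1\p_3,\p_1\p_2$ sent to $\infty,0,-1$, one gets $\Bir(\infty,-1,\ell,0)=\frac{-\ell}{1+\ell}$, so $\ell=\D_1$ corresponds to $\aK=-1$, $\ell=\p_1\p_3$ to $\aK=0$, and $\ell=\p_1\p_2$ to $\aK=\infty$. Your conclusion that $\aK\in\KKol$ forces $\ell$ to avoid all three reference lines is unchanged.

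Second, the final step ``genericity follows from the characterization $\Tri\neq 0,\infty,-1$'' is slightly circular: both that characterization and formula~\eqref{eq- triple ratio as a cross ratio} are stated for \nondegeneratedTF{} triples, so you need nondegeneracy before you can invoke them. This is easy to supply directly from what you already have. Since $\ell\neq\p_1\p_3$ and $\q\in\ell\cap\D_2$ with $\p_1\notin\D_2$, one checks $\p_1\notin\D_3$ (else $\p_3\in\p_1\q=\ell$); since $\q\neq\p_2$ and both lie on $\D_2$ while $\p_3\notin\D_2$, one gets $\p_2\notin\D_3$; and similarly $\q\neq\p_{12}$ gives that $\D_1,\D_2,\D_3$ are not concurrent. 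With these checks in hand the triple is generic outright, and then \eqref{eq- triple ratio as a cross ratio} gives $\Tri(\F_1,\F_2,\F_3)=\aK$. Your uniqueness argument is fine as written.
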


\begin{lemm}
  Let $(\F_1,\F_2,\F_3)$ be a generic triple of flag. For all
  $\Ep$, $\Ep'$ in $\KKo$ and $\Z'$ in $\KKol$, there exists a
  unique flag $\F_4$ such that
   $$\Ep=\Bir(\D_1, \p_1 \p_2, \p_1 \p_3, \p_1 (\D_3\cap\D_4) )$$
   $$\Ep'=\Bir(\D_3, \p_3 \p_4, \p_3 \p_1, \p_3 (\D_2\cap\D_1))$$
  and the triple of flags $(\F_1,\F_2,\F_3)$ is generic 
  and has triple ratio $\Z'$. 
\end{lemm}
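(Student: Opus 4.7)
The plan is to construct $\F_4=(\p_4,\D_4)$ explicitly by using each of the three equations to pin down a one-dimensional projective constraint, and then read off $\D_4$ and $\p_4$. (I assume the final clause contains a typo and should read: the triple $(\F_3,\F_4,\F_1)$ is generic of triple ratio $\Z'$.) Concretely, I would proceed in four steps.

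First, work in the pencil of lines through $\p_1$. Genericity of $(\F_1,\F_2,\F_3)$ guarantees that $\D_1$, $\p_1\p_2$, $\p_1\p_3$ are pairwise distinct, so for any $\Ep\in\KKo$ there is a unique line $\L$ through $\p_1$ with $\Bir(\D_1,\p_1\p_2,\p_1\p_3,\L)=\Ep$. Since $\Ep\ne 0,\infty$, the line $\L$ is distinct from $\D_1$ and $\p_1\p_3$, so it cuts $\D_3$ in a unique point $Q$. The equation for $\Ep$ then forces $\D_3\cap\D_4 = Q$, so any solution $\D_4$ must pass through $Q$.

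Second, work in the pencil of lines through $\p_3$. By genericity, $\D_3$, $\p_3\p_1$ and $\p_3(\D_1\cap\D_2)$ are pairwise distinct, so $\Ep'\in\KKo$ determines a unique line $L$ through $\p_3$ with $\Bir(\D_3,L,\p_3\p_1,\p_3(\D_1\cap\D_2))=\Ep'$. This line must be $\p_3\p_4$, which forces $\p_4\in L$. Third, apply the cross-ratio formula \eqref{eq- triple ratio as a cross ratio} for the triple ratio of $(\F_3,\F_4,\F_1)$, which reads $\Z'=\Bir(\D_3,\p_3\p_4,\p_3(\D_4\cap\D_1),\p_3\p_1)$. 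Since the first three arguments (with $\p_3\p_4 = L$ from Step 2) are already pairwise distinct, $\Z'\in\KKol$ determines a unique line $L'$ through $\p_3$ and hence a unique point $R=L'\cap \D_1$, which must equal $\D_4\cap\D_1$.

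Finally, the constraints from Steps 1 and 3 force $\D_4$ to pass through both $Q\in \D_3$ and $R\in \D_1$. The key verification is that $Q\ne R$, which reduces to checking $R\notin\D_3$, i.e.\ $L'\ne\D_3$; this follows from $\Z'\ne 0$. Then $\D_4:=QR$ is determined, and $\p_4:=L\cap\D_4$ is forced (well-defined because $L\ne\D_4$ — here we use $\Z'\ne -1,\infty$ and $\Ep\ne\infty$ to exclude the degenerations $L=\D_4$). The resulting flag $\F_4=(\p_4,\D_4)$ satisfies the three equations by construction, and the triple $(\F_3,\F_4,\F_1)$ is generic since its triple ratio $\Z'$ lies in $\KKol$. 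Uniqueness is automatic since each step extracted the only possible value.

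The main obstacle is bookkeeping: one must carefully verify at each step that the cross ratios are \nondegeneratedQ, that the auxiliary lines $\L, L, L'$ are truly distinct from the reference lines used, and especially that $Q\ne R$ so that $\D_4=QR$ makes sense. This amounts to translating each forbidden value ($0$, $-1$, $\infty$) of $\Ep,\Ep',\Z'$ into a projective incidence (collinearity or concurrence) that is ruled out by the hypotheses $\Ep,\Ep'\in\KKo$ and $\Z'\in\KKol$ together with the genericity of $(\F_1,\F_2,\F_3)$. Once these checks are made, existence and uniqueness of $\F_4$ with the required genericity of $(\F_3,\F_4,\F_1)$ follow directly.
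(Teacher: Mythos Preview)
Your plan is correct and follows essentially the same route as the paper: use each cross-ratio constraint to pin down one projective datum and then intersect, the only variation being that the paper expresses the triple ratio at $\p_1$ (obtaining a line $\Delta'=\p_1\p_4$ through $\p_1$, hence constructing $\p_4=\Delta\cap\Delta'$ first and then $\D_4=\p_4\p$) whereas you work at $\p_3$ (obtaining $\D_4=QR$ first and then $\p_4=L\cap\D_4$). One small slip: the exclusion $L'\neq\D_3$ corresponds to $\Z'\neq -1$ (since $\Bir(\D_3,L,\D_3,\p_3\p_1)=-1$), not to $\Z'\neq 0$; this is harmless because $\Z'\in\KKol$ rules out both values.
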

\begin{proof}
Since $\F_1$,$\F_2$, and  $\F_3$ are in generic position, they define three
pairwise distinct points $\D_3 \cap \D_1$, $\D_3 \cap (\p_1\p_2)$, and
$\p_3$ on the line $\D_3$. 
So there exists a unique point
$\p$ on $\D_3$ such that     
$\Bir(\D_3 \cap \D_1, \D_3 \cap (\p_1\p_2),  \p_3, \p)=\Ep$.

Similarly, we have three pairwise distinct lines $\D_3$, $\p_3(\D_2\cap\D_1)$, $\p_3\p_1$
through point $\p_3$, hence there exists a unique line $\Delta$ through
$\p_3$ such that
$\Bir(\D_3, \Delta, \p_3\p_1, \p_3(\D_2\cap\D_1))=\Ep'$,
and $\p_1\notin\Delta$ as $\Ep'\neq\infty$.

Since $\Ep \neq 0,\infty$, we have $\p \neq \p_3$ and $\p\notin \D_1$, hence
we have  three pairwise distinct lines $\D_1,\p_1\p_3,\p_1\p$ at
$\p_1$, and there exists a unique line $\Delta'$ through $\p_1$
satisfying 
$\Bir(\D_1,\p_1\p_3,\p_1\p,\Delta')=\Z'$, 
and $\p_3\notin\Delta'$ as $\Z'\neq -1$.
We have $\Delta \neq \Delta'$ (else $\p_1\in\Delta$) 
so $\Delta$ and $\Delta'$
intersects in a unique point $\p_4$ 
with $\p_4\notin \D_1, \D_3$, and $\p_4\notin \p_1\p_3$.
Then $\p\neq \p_4$ 
(else $\p\in \Delta'$ and $\Delta'=\p_1\p$ and $\Z'=0$)
so we may define $\D_4=\p_4\p$, and then $\D_4\neq \D_1,\D_3$. 
We have $\p_3\notin \D_4$ as $\D_4\cap \D_3=\p\neq \p_3$. 
Since $\Delta'=\p_1\p_4$ is different from $\p_1\p$ (since $\Z'\neq 0$),
we have that $\p_1\notin \D_4$.
Since $\p=\D_4\cap\D_3$ is different from $\D_1\cap\D_3$,
we have that $\D_1\cap\D_3$ is not on $\D_4$.
Therefore the triple $(\F_1,\F_3,\F_4)$ is generic
and its triple ratio is
$\Bir(\D_1,\p_1\p_3,\p_1\p,\p_1\p_4)=\Z'$ as $\p_1\p_4=\Delta'$.
\end{proof}
\end{proof}

\label{s- construction of rhoZ}

\subsubsection{Equivariance and construction of representations}
We now suppose that $\Tt$ is the lift of an ideal triangulation $\T$ of
$\Sf$ and that $\FGpt$ is a FG-parameter on $\Tt$ invariant under
$\Ga=\pi_1(\Sf)$, i.e. lifting a FG-parameter $\FGp$ on $\T$. 
We denote $\FdevZ=\FdevZt$.
We now show that, 
since $\PGL(\KK^3)$ acts $1$-transitively on generic triples of
flags of given triple ratio, by rigidity of the construction,
we have an associated {\em holonomy} representation.

\begin{prop}
Let $\FGp=((\Z_\tau)_\tau,(\Ep_\e)_\e)$ in
$(\KKol)^{\Triangles(\T)} \times (\KKo)^{\orEdges(\T)}$,
and let $\Fdev=\FdevZ$.
There exists a unique  representation
$\rho:\Ga \to \PGL(\KK^3)$ such that
$\Fdev$ is $\rho$-equivariant, i.e. 
$\rho(\ga)\FdevZ(\i)=\Fdev(\ga\i)$
for all $\ga\in\Ga$, $\i\in\Farey{\Sf}$.
We will denote $\rho=\rhoZ$ and call it 
the representation with FG-parameter $\FGp$.
\end{prop}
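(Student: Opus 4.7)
The plan is to define $\rho(\ga) \in \PGL(\KK^3)$, for each $\ga \in \Ga$, as the unique projective transformation realizing equivariance on the image of $\Fdev$, and to verify the homomorphism property by invoking the same uniqueness.

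Fix $\ga \in \Ga$ and consider the composed map $\Fdev \circ \ga : \Farey{\Sf} \to \MaxFlags(\PP)$. Because $\Tt$ is the lift of the triangulation $\T$ of $\Sf$, the action of $\ga$ permutes the triangles and oriented edges of $\Tt$; in particular $\Fdev \circ \ga$ is again $\Tt$-\transverse{}. Its FG-invariant at a triangle $\taut$ (resp.\ oriented edge $\et$) of $\Tt$ equals, by construction of the invariants in Section \ref{ss- invariants of Fdev}, the FG-invariant of $\Fdev$ at $\ga \taut$ (resp.\ $\ga \et$). Since by hypothesis the parameter on $\Tt$ is the lift of the parameter $\FGp$ of $\T$, it is $\Ga$-invariant, so these latter invariants coincide with those of $\Fdev$ at $\taut$ and $\et$. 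Thus $\Fdev \circ \ga$ and $\Fdev$ share the same FG-invariant $\FGp$.

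Proposition \ref{prop- existence unicite Fdev} then furnishes an element $g_\ga \in \PGL(\KK^3)$ with $\Fdev \circ \ga = g_\ga \circ \Fdev$: apply the proposition to the \transverse{} flag map $\Fdev \circ \ga$ to recover, after precomposing by a suitable element of $\PGL(\KK^3)$ acting on the image of the base triangle $\tauo$, the same canonical form used to normalize $\Fdev = \FdevZ$. The element $g_\ga$ is in fact uniquely determined: by transversality $\Fdev(\tauo)$ is a generic triple of flags, and $\PGL(\KK^3)$ acts $1$-transitively on generic triples of flags of given triple ratio (Section \ref{s- triple ratio}), so two projective transformations agreeing on $\Fdev(\tauo)$ coincide. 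We set $\rho(\ga) := g_\ga$.

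The homomorphism property follows from the same uniqueness. For $\ga_1, \ga_2 \in \Ga$, the two identities
\[
\Fdev \circ (\ga_1 \ga_2) = \rho(\ga_1 \ga_2) \circ \Fdev
\qquad\text{and}\qquad
\Fdev \circ (\ga_1 \ga_2) = \rho(\ga_1) \circ \Fdev \circ \ga_2 = \rho(\ga_1) \rho(\ga_2) \circ \Fdev
\]
force $\rho(\ga_1 \ga_2) = \rho(\ga_1)\rho(\ga_2)$, and uniqueness of the whole representation $\rho$ is immediate from the uniqueness of each $\rho(\ga)$. No serious obstacle is expected; the proof is essentially bookkeeping combining the rigidity of Proposition \ref{prop- existence unicite Fdev} with the $\Ga$-invariance built into the lifted FG-parameter, the one point meriting care being the verification that the FG-invariants transform correctly under the $\Ga$-action on $\Tt$.
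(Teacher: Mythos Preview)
Your proof is correct and follows essentially the same approach as the paper: both define $\rho(\ga)$ via the $1$-transitivity of $\PGL(\KK^3)$ on generic triples of flags of given triple ratio, use the $\Ga$-invariance of the lifted FG-parameter together with the uniqueness in Proposition~\ref{prop- existence unicite Fdev} to establish equivariance, and deduce the homomorphism property from that same uniqueness. The only cosmetic difference is the order of presentation---the paper first defines $g$ by $g\,\Fdev(\tauo)=\Fdev(\ga\tauo)$ and then checks $g\circ\Fdev=\Fdev\circ\ga$, whereas you argue the existence of $g_\ga$ slightly more indirectly---but the logical content is the same.
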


In particular $\FdevZ(\i)$ is a flag fixed by $\rhoZ(\ga_i)$.
Note that different choices of $\FGp$ may lead to the same
representation $\rhoZ$.

\begin{proof}
Let $\ga\in\Ga$. The triples of
flags $\Fdev(\ga\tauo)$ and $\Fdev(\tauo)$ 
have same triple ratio $\Zt_{\ga\tauo}=\Zt_\tauo\neq -1$,
so there exists a unique $\g$ in $\PGL(\KK^3)$ such that 
$\g\Fdev(\tauo)=\Fdev(\ga\tauo)$. 
We set then $\rho(\ga)=\g$.
The maps $\rho(\ga)\circ\Fdev$ and $\Fdev\circ\ga$ from $\Farey{\Sf}$ to
$\MaxFlags(\PP)$ 
have same FG-invariant $\FGpt=\FGpt\circ\ga :
\Triangles(\Tt)\cup\orEdges(\Tt)\to\KK$ with respect to $\Tt$, 
and send the base triangle $\tauo$
to the same generic triple of flags,
hence they coincide 
by Proposition \ref{prop- existence unicite Fdev}.
The fact that $\rho$ is a morphism follows then from
$1$-transitivity on generic triples of flags, since:
$$\rho(\ga_1\ga_2)\Fdev(\tauo)
=\Fdev(\ga_1\ga_2\tauo)
=\rho(\ga_1)\Fdev(\ga_2\tauo)
=\rho(\ga_1)\rho(\ga_2)\Fdev(\tauo)\; .$$
\vskip -3.5ex
\end{proof}

\subsection{Other edge invariants and relation with 
\texorpdfstring{\cite{FoGoSPC}}{[FG]}}
\label{ss- relation with usual FG}
Note that our edge invariants $\Ep_\e$ differ sligthly from those of
\cite{FoGoSPC}. We here describe the relationship in detail.
We use the setting of section \ref{ss- invariants of Fdev}.

Let $\i,\j,\k,\l$ in $\Farey{\Sf}$ be
the vertices of two adjacent triangles
$\tau=(\i, \j , \k)$  and $\taup=(\k, \l , \i)$
with common edge $\e=(\k,\i)$. 
The associated invariants $X,Y,Z,W$ of \cite{FoGoSPC} are in our settings
$X   = \Z_\tau$, $Y   = \Z_\taup$, $Z   = \Z_\e$, and $W   = \Z_\eb$,
where $\Z_\e$ denotes   the following cross-ratio
\[\Z_\e
=\Bir
(\D_\i, \p_\i \p_{\j}, \p_\i \p_{\k}, \p_\i \p_\l)
\;.
\]

\label{ss- alg Dual edge invariants}
The edge invariant $\Z_\e$ is not symmetric under duality, yet
exchanging the roles of points and lines
provide another natural invariant 
$$\Zet_\e
=\Bir(\p_\i, \D_\i \cap \D_\j, \D_\i \cap \D_\k,
\D_\i \cap \D_\l)
\;.
$$
Our edge invariants $\Ep_\e$ are 
then easily related to the original $\Z_\e$ by (using the cocycle
identity):
\begin{equation}
\label{eq- usual FG from alter FG}
\begin{array}{ll}
\Z_\e   & = \Ep_\e(1+\Z_\taup)\\
\Zet_\e & = \Ep_\eb(1+\Z_\taup^{-1})
\end{array}
\;.
\end{equation}
In particular, when $\KK$ is an ordered field, 
then  if the triangle invariants are positive, 
our edge invariants are positive if and only if 
the usual edge invariants are positive.

Note that the relation linking usual FG-invariants 
of two adjacent triangles
\begin{equation}
\label{eq- relation between Z and Z*}
\Zet_\e
=\Z_\eb\frac{1}{1+\Z_\tau}(1+\Z_\taup^{-1})
\end{equation}
(compare \cite[2.5.3]{FoGoSPC})
 follows from \eqref{eq- usual FG from alter FG} 
and from  the autoduality
 of the $\Ep_\e$,
 since reversing the edge $\e$ we get 
\begin{align*}
\Ep_\e=\Z_\e(1+\Z_\taup)^{-1}=\Zet_\eb(1+\Z_\tau^{-1})^{-1}\\
\Ep_\eb=\Z_\eb(1+\Z_\tau)^{-1}=\Zet_\e(1+\Z_\taup^{-1})^{-1}
\;.
\end{align*}

\section{The \texorpdfstring{$\typeA_2$}{A\_2}-complex 
\texorpdfstring{$\CEsgZ$}{K} 
associated with
a  \leftshifting{} 
\texorpdfstring{$\gFGp$}{(z,s)}%
}
\label{s- construction A2-complexe CgZ}

\subsection{\texorpdfstring{$(\Aa,\W)$}{(A,W)}-complexes and 
\texorpdfstring{\tTsurfaces{}}{1/3-translation surfaces}} 
In this section, we introduce the notion of 
$\W$-translation surfaces, generalizing
translation and half-translation surfaces,
and  the more general notion of $(\Aa,\W)$-complexes.
Natural examples are subcomplexes 
of Euclidean buildings with model flat $(\Aa,\W)$.
We show that, like Euclidean buildings, these spaces are naturally
endowed with a $\Cc$-valued metric and  associated $\Cc$-distance
(where $\Cc$ is a standard fixed Weyl chamber in $\Aa$). 
\subsubsection{\Wsurfaces{}}
\label{sss- tTsurf}
Let $\Aa$ be a Euclidean vector plane and
let  $\W$ be a finite subgroup of isometries of $\Aa$.
A {\em \WTsurface} consists of  a compact surface $\WSurf$ 
possibly with boundary,
   a finite set  of interior points $\Sing\subset \WSurf$ ({\em singularities})
and a $(\Waff,\Aa)$-structure 
on $\WSurf-\Sing$
i.e. an atlas of charts $\chart_\indChart:U_\indChart\to \Aa$
with transition maps in $\Waff=\W\ltimes \Aa$.
This atlas induces in particular a flat metric on $\WSurf-\Sing$,
and we require that each singular point $\x\in \Sing$ has a
neighborhood $U$ such that $U-\{\x\}$ is isometric to a punctured cone.

For $\W=\{\id\}$ (resp. for $\W=\{\pm\id\}$)
 it corresponds to the classic notion 
of {\em  translation surface} 
(resp. of {\em  half-translation surface}) (see for example
\cite{Masur}, \cite{Yoccoz}). 

By analogy, we will call a {\em \tTsurface} a \WTsurface{} with $\W$ the 
subgroup of  rotations of angle in $\frac{2\pi}{3}\ZZ$.

\subsubsection{\texorpdfstring{$(\Aa,\W)$}{(A,W)}-complexes}
In this section,  $(\Aa,\W)$ is 
a finite reflection group of dimension two.
We recall that $\Waff$ is the subgroup of affine isomorphisms of
$\Aa$ with linear part in $\W$.

Intuitively speaking,
 a {\em $(\Aa,\W)$-complex} (or {\em $\W$-complex}, 
or {\em $\typeA_2$-complex} when $\W$ is of type $A_2$)
is a space $\CEs$ obtained by gluing polygons of $\Aa$
along boundary segments  by elements of $\Waff$.
We now give a precise definition of $(\Aa,\W)$-simplicial complexes 
following the definition of Euclidean simplicial complexes 
in \cite[I.7.2]{BrHa}.

\begin{defi}($(\Aa,\W)$- simplicial complex)
Let $\{ \SimplA^\indSimpl,\ \indSimpl\in \IndSimpl \}$ be a family of
affine simplices $\SimplA^\indSimpl \subset\Aa$. 
Let 
$\E=\sqcup_{\indSimpl\in \IndSimpl} 
\SimplA^\indSimpl \times \{\indSimpl\}$ 
denote their disjoint union.
Let $\simeq$ be an equivalence relation on $\E$  and let 
$\CEs=\E/\simeq$ denote the quotient space.
Let $\projCEs:\E\to\CEs$ denote the corresponding projection,
define $\projCEs_\indSimpl : \SimplA^\indSimpl \to \CEs$ by
$\projCEs_\indSimpl(\xA)=\projCEs(\xA,\indSimpl)$, and
denote by $\SimplCEs^\indSimpl\subset \CEs$ the image $\projCEs_\indSimpl(\SimplA^\indSimpl)$.

The space $\CEs$ is called a  {\em $(\Aa,\W)$-simplicial complex} if
\begin{enumerate}

\item for every $\indSimpl\in\IndSimpl$, 
the map $\projCEs_\indSimpl$  is injective.

\item If $\SimplCEs^\indSimpl\cap \SimplCEs^\indSimplp \neq\emptyset$,
  then 
there is an element $\w_{\indSimpl,\indSimplp}$ of  $\Waff$ such that
for all $\xA\in \SimplA^\indSimpl$ and $\xAp\in \SimplA^\indSimplp$
we have
$\projCEs(\xA,\indSimpl)= \projCEs(\xAp,\indSimplp)$
if and only if 
$\xAp=\w_{\indSimpl,\indSimplp}(\xA)$,
and
$\SimplA^{\indSimpl,\indSimplp}=
\SimplA^\indSimpl \cap \w_{\indSimpl,\indSimplp}^{-1}(\SimplA^\indSimplp)$   
is a face of $\SimplA^\indSimpl$.
\end{enumerate}
\end{defi}
In particular, $\CEs$ is 
a Euclidean simplicial complex of dimension $2$.
We will  suppose from now on that 
$\CEs$ is connected 
and that 
the set 
of isometry classes of simplices of $\CEs$ is finite.
We denote by $\deuc$ the associated
metric, which is
a complete geodesic length metric (see \cite[I.7]{BrHa}).
We denote by $\TangS_\xCE \CEs$ the geometric link  of $\CEs$ at a point $\xCE$,
which is a spherical $1$-dimensional complex (hence a metric graph) 
endowed with the angular length metric $\lengthangle$ 
(see \cite[I.7.15]{BrHa}). 
{From now on, we will  suppose that $\CEs$ has non positive curvature,
 that is for all points $\xCE\in\CEs$
each injective loop in the link $\TangS_\xCE \CEs$ has  length at least
$2\pi$.
}
If $\CEs$ is simply connected,  $(\CEs,\deuc)$ is then a $\CAT(0)$ metric space
(see Theorem I.5.4 and Lemma I.5.6 of \cite{BrHa}).

\subsubsection{\texorpdfstring{$\Cc$}{C}-distance}
Germs of non trivial segments at a point $\xCE\in\CEs$
have a well-defined projection in $\bord\Cb$ (their ({\em type
  (of direction)}).
In particular the notions of regular and singular
directions still make sense in $\TangS_\xCE\CEs$.
Note that a geodesic segment is not necessarily of constant
type of direction, unlike in Euclidean buildings.
The {\em $\Cc$-length} $\ellC(I)$ of a segment $I=[\xCE,\yCE]$ 
contained  in a simplex $\SimplCEs^\indSimpl$ of
$\CEs$ is defined as the $\Cc$-length in $\Aa$ of
the segment $\projCEs_\indSimpl^{-1}(I)$ 
(note that it does not depend on the choice of $\indSimpl$, 
because the transition maps are in $\Waff$). 
The {\em $\Cc$-length} of 
a piecewise affine path
$\apath: [0,\sRR] \to \CEs$ in $\Es$ 
 is defined by
$\ellC(\apath)=\sum_{\indPath}\ellC([\xCE_\indPath,\xCE_{\indPath+1}])$
for one (any) subdivision 
  $\tRR_0=0 <\tRR_1< \cdots < \tRR_\IndPath=\sRR$ of
$[0,\sRR]$ such that the restriction of $\apath$ to
$[\tRR_\indPath,\tRR_{\indPath+1}]$ 
is an affine segment $[\xCE_\indPath,\xCE_{\indPath+1}]$
conatined in some simplex of $\CEs$.
It is a vector in the closed Weyl chamber $\Cb$.
It is invariant under subdivisions of the simplicial complex $\CEs$.
\label{s- def C-distance in A2-complex}%
When $\CEs$ is simply connected (hence $\CAT(0)$),
we define the {\em $\Cc$-distance} 
from $\xCE$ to $\yCE$ in $\CEs$ as the 
$\Cc$-length $\Cd(\xCE,\yCE)=\ellC(\apath)$
of the geodesic $\apath$ from $\xCE$ to $\yCE$.
We then have
$$\Cd(\yCE,\xCE)=\Cd(\xCE,\yCE)^ \opp$$
and
$$\norm{\Cd(\xCE,\yCE)}\leq \deuc(\xCE,\yCE)$$

\begin{rema}
\label{rem- Cd do not refine deuc in C-complexes}
Note that, unlike in Euclidean
buildings, the inequality may well be strict. 
Thus the $\Cc$-distance is no longer a refinement of the
distance $\deuc$.
A basic   example is
given by  non convex subsets  $\CEs$ of $\Aa$.

\end{rema}

\subsubsection{\texorpdfstring{$\Cc$}{C}-Length of automorphisms}
An {\em automorphism} $\g$ of $\CEs$ is a bijection 
preserving $\Cd$.
In particular it preserves the distance $\deuc$.
The  {\em $\Cc$-length} of  $\g$ 
of $\CEs$ translating some geodesic $\geod$ is defined by
$$\ellC(\g)=\Cd(\xCE,\g\xCE)$$
for one (any) $\xCE$  on $\geod$
(it does not depend on the choice of $\geod$ as 
two different translated geodesics bound a flat strip, 
and may be  developped as parallel geodesics in $\Aa$).
Note that, in contrast to the case of Euclidean buildings,
 the $\Cc$-length do no longer refine the {\em Euclidean length}
$$\elleuc (\g)=\{=\{\deuc(\xE,\g\xE),\ \xE\in\Es\}\;.$$
We have $$\norm{ \ellC (\g)} \leq \elleuc (\g)$$
but  the inequality  may be strict.

\subsection{Abstract geometric FG-parameters and \LeftShift{}}

Let $\T$ be an ideal triangulation of a punctured surface $\Sf$, with
set of triangles $\TrianglesT$ and set of oriented edges $\orEdgesT$.
Consider an {\em geometric FG-parameter}  on $\T$, 
i.e. an element $\gFGp=((\gZ_\tau)_\tau,(\gEp_\e)_\e)$ 
in $\RR^\TrianglesT \times\RR^\orEdgesT$.
We now introduce the class of abstract geometric FG-parameters $\gFGp$
to which we are going to associate an $\typeA_2$-complex $\CEsgZ$.
\label{ss- hyp (LeftShift) sur gZ}
Let  $\e$ be an oriented edge in $\T$, 
with left and right adjacent triangles $\tau$ and $\taup$.

\begin{defi}
\label{def- hyp (LeftShift) sur gZ}
We say that  
$\gFGp$ is {\em \leftshiftingedge{}} $\e$ 
if we have
$\gEp_\e     > - \gZmoins_\tau,- \gZplus_\taup $
and $\gEp_\eb     > - \gZplus_\tau,-\gZmoins_\taup$.
\end{defi}
\begin{rema}
\label{rema- leftshiftingedge e : three cases}

Then 
we are in  one (and only one) of the three following cases:
\begin{enumerate}
\item $\gEp_\e>0$ and $\gEp_\eb >0$

\item 
$\gZ_\tau<0$, $\gZ_\taup>0$,
$\gEp_\eb>0$ and $ \gZ_\tau, -\gZ_\taup <\gEp_\e \leq 0 $ ;
\item
 $ \gZ_\tau>0$, $\gZ_\taup < 0$, 
$\gEp_\e >0$ and $ \gZ_\taup, -\gZ_\tau <\gEp_\eb \leq 0 $ ;
\end{enumerate}

\end{rema}

We  say that $\gFGp$ is {\em \leftshifting} if it is
\leftshiftingedge{} $\e$ for all $\e$, and we denote this property by 
(\HypLeftShift).
Note that the subset $\ConeL$ of \leftshifting{} $\gFGp$ 
in $\RR^\TrianglesT \times \RR^\orEdgesT$
is an non empty open cone 
(in fact a finite union of open convex polyhedral cones).

\subsection{Construction of the
  \texorpdfstring{$\typeA_2$}{A\_2}-complex 
\texorpdfstring{$\CEsgZ$}{K} 
associated with 
\texorpdfstring{$\gFGp$}{(z,s)}}
\label{ss- construction of the A2-complex from gZ}
Consider
 a \leftshifting{} geometric FG-parameter
$\gFGp=((\gZ_\tau)_\tau,(\gEp_\e)_\e)$ 
in $\RR^\TrianglesT \times \RR^\orEdgesT$
(see Definition \ref{def- hyp (LeftShift) sur gZ}).
Lift $\gFGp$ on the universal cover $\Tt$ in a $\Ga$-invariant
\leftshifting{} geometric FG-parameter, again denoted by $\gFGp$.

For each marked 
triangle $\tau=(\i,\j,\k)$ of the triangulation $\Tt$ 
let $\cellAgZ^\tau\subset \Aa$ be the singular equilateral triangle
 with vertices $\vstAi^\tau=0$, 
$\vstAj^\tau
=(-\gZmoins_\tau,-\gZplus_\tau)$ and
$\vstAk^\tau
=(-\gZplus_\tau,-\gZmoins_\tau)$ (in simple roots coordinates). 
Note that $\cellAgZ^\tau$ lies in the chamber $-\Cb$.
\begin{figure}[h]
  \includegraphics{figures/Aa_triangles_gZtau.fig}
  \caption{Singular triangle $\cellAgZ^\tau$ in $\Aa$.}
\end{figure}

For each oriented edge  $\e=(\k,\i)$ of $\Tt$
let $\cellAgZ^\e\subset \Aa$ be 
either,  when $\gEp_\e,\gEp_\eb\geq 0$,
 the closed segment from $0$ to the point $(\gEp_\eb,\gEp_\e)$,
or,
when $\gEp_\e<0$ or $\gEp_\eb <0$,
the parallelogram 
given (in simple roots coordinates)
by $\cellAgZ^\e= [0, \gEp_\eb]\times [0, \gEp_\e]$ (intersection of two
Weyl chambers of opposite direction).

We now describe formally how $\CEstgZ$ is constructed,
 gluing the polygons $\cellAgZ^\m$.
We define $\CEstgZ= \sqcup_{\m\in \orMPt} \cellAgZ^\m \times
\{\m\} / \sim$, where $\orMPt=\mkTriangles(\Tt)\cup\orEdges(\Tt)$ 
and $\sim$ is the equivalence relation generated by
the following identifications:
For every oriented edge $\e=(\k,\i)$ of $\Tt$,
with positively oriented adjacent triangle $\tau=(\i,\j,\k)$, 
Remark \ref{rema- leftshiftingedge e : three cases}
implies that 
the convex polygons $\cellAgZ^\tau$ and $\cellAgZ^\e$ intersects on the
subsegment (maybe reduced to a point)
$[\bEdgeAki^\tau,\vstAi^\tau]$ of $[\vstAk^\tau,\vstAi^\tau]$,  
with $\bEdgeAki^\tau=(\min(0,\gEp_\eb), \min(0,\gEp_\e))$.
We then  glue $\cellAgZ^\tau \times \{\tau\}$ along $\cellAgZ^\e \times \{\e\}$ 
along this segment
(i.e. by $(\vA,\tau)\sim (\vA,\e)$ for  $\vA\in \cellAgZ^\tau \cap \cellAgZ^\e$).
 
 \begin{figure}[h]
  \includegraphics[scale=0.8]{figures/gluings_marked_tau_e.fig}
  \caption{Gluings.}
  \label{fig- gluings}
\end{figure}

If $\taup$ is a permutation of 
a marked triangle $\tau=(\i,\j,\k)$ in $\Tt$,
 we identify $\cellAgZ^{\taup} \times \{\taup\}$ with $\cellAgZ^\tau \times \{\tau\}$ by
the unique affine isomorphism $\w_{\tau,\taup}: \cellAgZ^{\tau}\isomto
\cellAgZ^{\taup}$ sending $\vstAs^\tau$ to $\vstAs^\taup$ for $\s=\i,\j,\k$,
which is in $\Waff$.
If $\eb=(\i,k)$ is the opposite edge of
$\e=(\k,\i)$, then 
we identify $\cellAgZ^\eb\times \{\eb\}$ with $\cellAgZ^\e\times \{\e\}$ by
the unique affine isomorphism 
$\w_{\e,\eb}:\cellAgZ^\e\isomto \cellAgZ^\eb$
with linear part $\wopp\in\W$ (which sends $0$ to $(\gEp_\e,\gEp_\eb)$).
We denote by $\chartCEs : \sqcup_\m\cellAgZ^\m \to \CEstgZ$ the canonical
projection.
We denote by  $\cellCEstgZ^\m$ the image 
$\chartCEs(\cellAgZ^\m)$ in $\CEstgZ$.
The {\em canonical charts} are the restrictions
 $\chartCEs_\m :\cellAgZ^\m \to \cellCEstgZ^\m$  of
$\chartCEs$ to $\cellAgZ^\m$. 
We thus  obtain a two dimensional $(\Aa,\W)$-complex $\CEstgZ$ 
endowed with a free and cocompact action of $\Ga$ by automorphisms. 
We denote by $\Cd$ the natural $\Cc$-valued distance on $\CEstgZ$
(see Section \ref{s- def C-distance in A2-complex}).
The quotient $\CEsgZ$  of $\CEstgZ$ under $\Ga$ is the
{\em $\typeA_2$-complex associated with FG-parameter $\gFGp$} on the
triangulation $\T$. It is a finite $2$-dimensional complex 
homotopy equivalent to $\Sf$.
We denote by  $\cellCEsgZ^\m$ the image of $\cellCEstgZ^\m$ in $\CEsgZ$.

We remember the special points in the above construction for later
use:
For each marked triangle $\tau=(\i,\j,\k)$ of $\Tt$
we denote by $\vstCEi(\tau)$ and $\bEdgeCEki$  
the points of $\CEstgZ$ given by
$\vstAi^\tau$ and $\bEdgeAki^\tau$.
If $\gZ_\tau=0$, then the corresponding singular triangle
$\CEstgZ^ \tau$ is reduced to a point $\vstCE_\tau$, and $\CEstgZ$ is
locally the union of three non trivial edges at $\vstCE_\tau$.

Note that the $\typeA_2$-complex $\CEstgZ$ is naturally {\em
  oriented},  in the sense
that the space of directions at each point  $\TangS_\x\CEstgZ$ 
and the boundary at infinity $\bordinf\CEstgZ$  
inherit a natural cyclic order from the orientation 
of the surface $\Sf$. 

\subsection{Particular cases: from trees to surfaces}

\subsubsection{Tree}
A case of special interest  is  when 
all singular flat triangles $\cellCEsgZ^\tau$
are reduced to a point.
The corresponding condition on
$\gFGp$ in $\RR^\TrianglesT \times\RR^\orEdgesT$ 
is
\[
(\HypTree) 
\left\{
\begin{array}{l}
  \gZ_\tau = 0 \mbox{ for all  triangles } \tau \mbox{  of } \T\\ 
  \gEp_\e > 0  \mbox{ for all oriented edges }\e \mbox{  of } \T
\end{array}
\right.
\]
%
Then $\CEstgZ$ is a $3$-valent ribbon  tree
isomorphic to the dual tree of the triangulation $\Tt$, 
and $\CEsgZ$ is a
graph isomorphic to the dual graph of the triangulation $\T$.
Both are endowed with a
$\typeA_2$-structure or {\em $\Cc$-metric},
i.e. a $\Cc$-valued function $\e\mapsto \ellC(\e)$ on oriented edges
 satisfying
$\ellC(\eb)=\ellC(\e)^\opp$.

\subsubsection{Tree of triangles}
Another - more general - particular case of interest is  when 
is  when all the $\cellCEsgZ^\e$ are segments. 
Then $\CEstgZ$ is a ``tree
of triangles'', obtained from  the dual tree of the triangulation
by replacing vertices by triangles.
The corresponding condition on the \leftshifting{} FG-parameter 
$\gFGp$ in $\RR^\TrianglesT \times\RR^\orEdgesT$
is
\begin{center}
(\HypTreeOfTriangles) $\gEp_\e \geq 0$ for all oriented edge $\e$   
\end{center}

Note that 
(\HypLeftShift) is automatic if $\gEp_\e > 0$ for all $\e$,
and that 
(\HypTreeOfTriangles) implies (\HypEdgeSep).

\subsubsection{Surface}
At the other end of the spectrum,
another particular case of special interest 
is  when %
\begin{center}
(\HypSf) $\gEp_\eb<0$ or $\gEp_\e<0$ for all oriented edge $\e$   
\end{center}
Then $\CEstgZ$ is a surface, homeomorphic to $\Sft$ 
(the thickening of the ribbon tree dual to
the triangulation), hence $\CEsgZ$ is a \tT{} surface homeomorphic to
 $\Sf$ (see \S \ref{sss- tTsurf}), 
with piecewise singular geodesic boundary and no interior
singularities.

\begin{rema}
The subset 
of \leftshifting{} $\gFGp$
in $\RR^\TrianglesT \times \RR^\orEdgesT$
satisfying (\HypSf) is not empty  if and only if 
the triangulation $\T$  is $2$-colourable 
(since  $\gZ_\tau$ and $\gZ_\taup$ have then  opposite sign for adjacent
triangles $\tau$ and $\taup$). It is a finite union of non empty 
open convex polyhedral cones, one for each $2$-coloration of $\T$
(choice of prescribed signs for the triangle parameters).
\end{rema}
\section{Nice invariant complexes  for actions on buildings}
\label{s- actions on buildings}

\subsection{Geometric FG-invariants for actions on buildings}
\label{ss- geom FG-inv}

In this section,
we introduce an analog of FG-invariants
for actions on Euclidean buildings $\Es$ of type $\typeA_2$.
These invariants take values in $\RR$ and are defined 
by geometric cross ratios in the projective plane
at infinity of $\Es$.
In the algebraic case (i.e. when $\Es$ is the Euclidean building of
$\PGL(\KK^3)$ for some ultrametric field $\KK$), 
these {\em geometric  FG-invariants} 
are obtained from
the $\KK$-valued usual FG-invariants 
 by  taking logarithms of absolute values.
Note that the geometric invariants are substantially weaker than the
algebraic invariants (since we take absolute values). 
The principal advantage is that 
geometric FG-invariants still make sense when the building $\Es$ is exotic
(non algebraic), whereas the usual FG-invariants are not defined
anymore.
We recall that 
the set $\bordF\Es$ of chambers at infinity (Furstenberg boundary) 
of the Euclidean building $\Es$ is identified with the set
$\MaxFlags(\PP)$ of flags in the projective plane $\PP$ at infinity of
$\Es$, and that $\geombir$ denotes the  $\RR$-valued cross
ratio on $\PP$ induced by $\Es$ 
(see Section \ref{s- geom cross ratio in building}).

\medskip

Let $\Fdev:\Farey{\Sf} \to \bordF\Es$
be a {\em flag map}.
We denote by  $\p_\i$ (resp. by $\D_\i$)
the  point (resp. the line) 
of the flag $\Fdev_\i=\Fdev(\i)$ 
for every $\i \in \Farey{\Sf}$.
We suppose that the map $\Fdev$ is equivariant 
under an action $\rho$ of $\Ga=\pi_1(\Sf)$ on $\Es$.
Let  $\T$ be  an ideal triangulation  of $\Sf$, 
and $\Tt$ be the lift  of $\T$ to $\Sft$.  

We suppose that $\Fdev$ is {\em \transverse{} to $\T$}
i.e. sends each triangle of $\Tt$ on a generic triple of ideal chambers. 
We recall that we denote by $\p_\ij$ the point $\D_\i\cap \D_\j$
(resp. by $\D_\ij$ the line $\p_\i\p_\j$) (when defined).

\subsubsection{Triangle invariants}
To each marked triangle $\tau=(\i,\j,\k)$ of the triangulation $\Tt$ 
we associate the {\em geometric triple ratio} (see \cite{ParTriples})
of the generic triple of chambers 
$\Fdev(\tau)=(\F_\i,\F_\j, \F_\k)$, which is 
the following triple of  geometric cross ratios, taking values in $\RR$, 
obtained from permutations of the four lines 
$\D_\i$, $\p_\i\p_\j$, $\p_\i\p_\jk$, $\p_\i\p_\k$ in $\PP$
(cyclically permuting the three last ones)
(see \cite{ParTriples} for details)
\[
\left\{
\begin{array}{rcl}
\gZ_\tau=\geomtrio(\F_\i,\F_\j, \F_\k) 
&:=& \geombir(\D_\i, \p_\i\p_\j, \p_\i\p_\jk, \p_\i\p_\k)\\
\gZprime_\tau=\geomtrip(\F_\i,\F_\j, \F_\k)
&:=& \geombir(\D_\i, \p_\i\p_\k, \p_\i\p_\j, \p_\i\p_\jk)\\
\gZpprime_\tau=\geomtripp(\F_\i,\F_\j, \F_\k)
&:=& \geombir(\D_\i, \p_\i\p_\jk, \p_\i\p_\k, \p_\i\p_\j)
\end{array}
\right.
\;.\] 
We recall from \cite{ParTriples} the following basic properties. 
Each of
$\gZ_\tau$, $\gZprime_\tau$ and $\gZpprime_\tau$ 
is invariant under cyclic permutation of $\tau$, 
and reversing the order gives 
$\gZ_\taub=-\gZ_\tau$, $\gZprime_\taub=-\gZpprime_\tau$
(denoting $\taub=(\k,\j,\i)$).
We have  $\gZ_\tau+\gZprime_\tau+\gZpprime_\tau=0$, 
and moreover the triple $(\gZ_\tau,\gZprime_\tau,\gZpprime_\tau)$  
is of the form
 $(0,\gZ, -\gZ)$, $(-\gZ,0,\gZ)$ or $(\gZ,-\gZ,0)$ with $\gZ\geq 0$.
Note that,   if $\gZprime_\tau \leq 0$, then  $\gZprime_\tau=-\gZmoins_\tau$ 
and  $\gZpprime_\tau=\gZplus_\tau$. 

(by the properties of the geometric cross ratio 
under $3$-cyclic permutation, see Prop. \ref{prop- geombir}).

In the algebraic case,
in terms of the algebraic triangle FG-invariant (triple ratio)
$\Z_\tau=\Tri(\F_\i,\F_\j, \F_\k)$ in $\KKol$, we have,
by the symmetries of the usual algebraic cross ratio under
$3$-cyclic permutations,
\begin{equation}
  \begin{array}{rcl}
    \gZ_\tau &=&\log\abs{\Z_\tau}\\
    \gZprime_\tau &=&\log\abs{({1+\Z_\tau})^{-1}}\\
    \gZpprime_\tau &=& \log\abs{1+\Z_\tau^{-1}}
    \;.
  \end{array}
\end{equation}

\subsubsection{Edge invariants}
To each oriented edge $\e=(\k,\i)$ between 
 two   adjacent triangles
$\tau=(\i,\j,\k)$ and $\taup=(\k,\l,\i)$, 
where $\i,\j,\k,\l$ are cyclically ordered
accordingly to orientation of the surface,
we  associate the triple of geometric cross ratios
 at the point $\p_\i$ in $\PP$ 
associated with the four lines 
$\D_\i, \p_\i \p_\j, \p_\i \p_\k, \p_\i \p_\kl$ by cyclic permutation
of the three last ones:
\[
\left\{
\begin{array}{rcl}
\gEp_\e
&=& \geombir(\D_\i, \p_\i \p_\j, \p_\i \p_\k, \p_\i \p_\kl)\\
\gEprime_\e
&=& \geombir(\D_\i, \p_\i \p_\kl, \p_\i \p_\j, \p_\i \p_\k)\\
\gEpprime_\e
&=& \geombir(\D_\i, \p_\i \p_\k, \p_\i \p_\kl, \p_\i \p_\j)
\end{array}
\right.
\;.\]
As for triangle invariants, we have
$\gEp_\e+\gEprime_\e+\gEpprime_\e=0$
and moreover the triple $(\gEp_\tau,\gEprime_\tau,\gEpprime_\tau)$  
is in  $\RR^+(0,1,-1)$, $\RR^+(-1,0,1)$ or $\RR^+(1,-1,0)$.

In the algebraic case,
the link with the algebraic edge invariants 
$\Ep_\e\in\KKo$ (defined in \S \ref{ss- alg edge invariants}) is:
\begin{equation}
  \begin{array}{rcl}
    \gEp_\e &=&\log\abs{\Ep_\e}\\
    \gEprime_\e &=&\log\abs{({1+\Ep_\e})^{-1}}\\
    \gEpprime_\e &=& \log\abs{1+\Ep_\e^{-1}}
\;.  \end{array}
\end{equation}

\begin{figure}[h]
\noindent{\hfill
\begin{minipage}[b]{4cm}
\centering
\includegraphics{figures/adjacent_triangles.fig}\\
in $\Sft$
\end{minipage}
\hfill
\begin{minipage}[b]{7cm}
\centering
\includegraphics[scale=.7]{figures/PP_FG_quadruple_alter_edge_inv.fig}\\
in $\PP$
\end{minipage}
\hfill }

  \caption{The edge invariant $\gEp_\e$ associated with an oriented edge
    $\e$.}
  \label{fig- geom edge inv}
\end{figure}

As $\Fdev$ is $\rho$-equivariant, 
the triangle and edge invariants are invariant under the action
of $\Ga$ on $\Tt$, hence induce 
 well-defined invariants associated to triangles and oriented edges of
 $\T$, we will call the  {\em geometric FG-invariants of $\Fdev$ relatively to $\T$}.

\subsection{Main result}
We refer the reader to 
Sections \ref{ss- hyp (LeftShift) sur gZ}
and  \ref{ss- construction of the A2-complex from gZ}
for the notion of \leftshifting{} (\HypLeftShift) 
geometric FG-parameter $\gFGp$,
and the associated $\typeA_2$-complex $\CEsgZ$.

\begin{theo}
\label{theo- local Cgeod surface}
Let $\rho$ be an action of $\Ga=\pi_1(\Sf)$ on $\Es$, 
and $\Fdev : \Farey{\Sf} \to \bordF\Es$ 
be a $\rho$-equivariant map.
Let $\T$ be an ideal triangulation of $\Sf$.
Suppose that $\Fdev$ is transverse to $\T$.
Let $\gZ_\tau$,
$\gZprime_\tau$, 
$\gZpprime_\tau$, 
with $\tau\in \TrianglesT$, 
and
$\gEp_\e$, 
$\gEprime_\e$, 
$\gEpprime_\e$, 
with $\e$  in $\orEdgesT$,
 be the geometric FG-invariants of $\Fdev$ relatively to $\T$.
Suppose that
\begin{mydescription}
\item[(\HypFlatTriangles)] 
for each triangle $\tau$ in  $\T$, 
we have $\gZprime_\tau \leq 0$,
\item[(\HypLeftShift)]  $\gFGp$ is \leftshifting{}.
\end{mydescription}
Let $\CEsgZ$ be the $\typeA_2$-complex 
of FG-parameter $\gFGp=((\gZ_\tau)_\tau, (\gEp_\e)_\e)$.
Then
there exists a $\rho$-equivariant map 
$\Psi: \CEstgZ\to \Es$, locally preserving  the $\Cc$-distance $\Cd$.
\end{theo}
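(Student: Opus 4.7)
The plan is to define $\Psi$ cell by cell on $\CEstgZ$, by attaching to each triangle $\tau$ of $\Tt$ a canonical singular flat triangle $\Delta^\tau \subset \Es$ determined by the flag data $\Fdev$, and to each edge $\e$ a canonical flat bridge, then to check that these pieces fit together $\rho$-equivariantly into a local $\Cc$-isometry.

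For the triangle pieces, I would use the geometric theory of generic triples of ideal chambers developed in \cite{ParTriples}. Since $\Fdev$ is transverse, each marked triangle $\tau=(\i,\j,\k)$ of $\Tt$ gives a generic triple $(\Fdev_\i,\Fdev_\j,\Fdev_\k)$ with geometric triple ratio $(\gZ_\tau,\gZprime_\tau,\gZpprime_\tau)$. Hypothesis (\HypFlatTriangles) ($\gZprime_\tau\leq 0$) places the triple in the flat regime where \cite{ParTriples} produces a canonical singular flat triangle $\Delta^\tau \subset \Es$ with vertices $\oTi^\tau, \oTj^\tau, \oTk^\tau$ realized as centers of tripods in the transverse trees $\Es_{\p_\i}, \Es_{\p_\j}, \Es_{\p_\k}$, whose $\Cc$-side lengths match those of $\cellAgZ^\tau$. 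I define $\Psi$ on $\cellCEstgZ^\tau$ as the unique affine $\Cc$-isometry onto $\Delta^\tau$ matching labelled vertices.

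For an oriented edge $\e=(\k,\i)$ of $\Tt$ between a left triangle $\tau=(\i,\j,\k)$ and a right triangle $\taup=(\k,\l,\i)$, I must bridge $\Delta^\tau$ and $\Delta^{\taup}$. The Busemann cocycle formulas \eqref{eq- projections and Busemann cocycle}, combined with the tree-boundary definition of $\geombir$ from Section \ref{ss- geometric cross ratio on the boundary of a tree}, compute the offsets between the two triangles as seen in the transverse trees $\Es_{\p_\i}$ and $\Es_{\D_\i}$ to be exactly $\gEp_\eb$ and $\gEp_\e$. The \leftshifting{} hypothesis (\HypLeftShift) ensures, in each of the three cases of Remark \ref{rema- leftshiftingedge e : three cases}, that the bridge between the triangles exists in $\Es$ as either a singular segment or a flat parallelogram contained in a common flat; I define $\Psi$ on $\cellCEstgZ^\e$ as the affine $\Cc$-isometry onto this piece. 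The restrictions of $\Psi$ from the triangle and the edge agree on their common subsegment by construction, so the pieces glue, and $\rho$-equivariance is automatic since every step is canonical in the $\rho$-equivariant datum $\Fdev$.

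The main technical obstacle is the \emph{local} preservation of the $\Cc$-distance at interior points of the glued edges and at the singular vertices of $\CEstgZ$. Within a single cell this is tautological from the construction. At a glued point $\xCE$, one must verify, for each small three-point configuration $(\yCE,\xCE,\zCE)$ in $\CEstgZ$ that crosses the gluing and is locally $\Cc$-geodesic there, that $\TangS_{\Psi\xCE}\Psi\yCE$ and $\TangS_{\Psi\xCE}\Psi\zCE$ are $\Cc$-opposite directions in the spherical link $\TangS_{\Psi\xCE}\Es$; by Proposition \ref{prop- critere 3 points et facettes croissantes} this is exactly what is needed. Hypothesis (\HypLeftShift) is designed for this: when a parameter $\gEp_\e$ or $\gEp_\eb$ is negative, the two adjacent flat triangles and the bridging parallelogram develop into $\Es$ on opposite sides of a shared Weyl-chamber germ along the common singular edge, so the required $\Cc$-opposition of tangent directions is preserved at every gluing point.
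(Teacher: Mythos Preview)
Your proposal is correct and follows essentially the same route as the paper: flat singular triangles $\Delta^\tau$ from \cite{ParTriples} under (\HypFlatTriangles), the edge offset $(\gEp_\eb,\gEp_\e)$ read off via Busemann cocycles in the transverse trees, and (\HypLeftShift) ensuring the adjacent pieces develop together into a single flat so that $\Psi$ is locally a marked-flat embedding. Two small points to tighten: the paper isolates as a key lemma that under (\HypLeftShift) the two adjacent triangles $\Delta^\tau,\Delta^{\taup}$ together with the bridge lie in one common flat (this makes the local $\Cc$-isometry immediate rather than a direction-by-direction check), and you should not forget the degenerate trivalent case $\gZ_\tau=0$, where $\cellCEstgZ^\tau$ is a point and three edge segments meet; there the local check reduces to the pairwise opposition at that point of the three Weyl chambers toward $\Fdev_\i,\Fdev_\j,\Fdev_\k$, which again comes from \cite{ParTriples}.
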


\begin{theo}
\label{theo- Cgeod surface}
\label{theo- global Cgeod surface}
Under the hypotheses and notations of Theorem \ref{theo- local Cgeod surface},
suppose furthermore that 
\begin{mydescription}
\item[(\HypH)] for each oriented edge $\e$  in  $\T$, 
we have 
$\gEprime_\e
\leq 0$ ;

\item[(\HypEdgeSep)] for each triangle $\tau$ in  $\T$ 
and every pair of edges $\ei$,
$\eii$ of $\tau$ 
 (oriented after $\tau$), we have 
$-\gEp_\ei-\gEp_\eii <\gZmoins_\tau$%
and $-\gEp_\eib-\gEp_\eiib <\gZplus_\tau$.
\end{mydescription}
Then the  map $\Psi: \CEstgZ\to \Es$
 preserves globally the $\Cc$-distance $\Cd$.
In particular
\begin{enumerate}
\item for all $\ga\in\Ga$
  \[ \ellC(\rho(\ga))=\ellC(\ga,\CEsgZ) \]
  (in particular the length spectrum of $\rho$ depends only on
  $\gFGp$)
and for usual lengths
  \[ \elleuc(\rho(\ga))=\norm{(\ellC(\ga,\CEsgZ)}, \]
\[ \ellH(\rho(\ga))=\normH(\ellC(\ga,\CEsgZ))  
\;. \]

\item The map $\Psi$ is bilipschitz. 
  The action
$\rho$ is undistorted, 
  faithfull and proper (hence discrete).
%
  %
  %
\end{enumerate}

\end{theo}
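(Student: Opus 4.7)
The map $\Psi$ is supplied by Theorem~\ref{theo- local Cgeod surface} as a $\rho$-equivariant \emph{local} $\Cc$-isometry, so the task reduces to upgrading local to global preservation of $\Cd$. The natural tool is the local-to-global criterion of Proposition~\ref{prop- critere Cgeod avec sing}. Given $\tilde x,\tilde y\in\CEstgZ$, the Euclidean geodesic between them crosses a finite chain of cells; let $\tilde z_0=\tilde x,\tilde z_1,\ldots,\tilde z_N=\tilde y$ be its successive boundary crossings and set $\xE_p=\Psi(\tilde z_p)$. Since $\Psi$ is affine on each cell and the cells are glued via $\Waff$, the path $(\xE_p)_p$ is piecewise affine in $\Es$, and each segment $[\xE_{p-1},\xE_p]$ is the $\Cc$-isometric image of a segment in a single flat. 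Proving that $\Psi$ globally preserves $\Cd$ amounts to proving that $(\xE_p)_p$ is a $\Cc$-geodesic in $\Es$.

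Hypothesis~(i) of Proposition~\ref{prop- critere Cgeod avec sing}---local $\Cc$-geodesicity at each $\xE_p$---is immediate from the local $\Cc$-isometry: the directions $\TangS_{\tilde z_p}\tilde z_{p-1}$ and $\TangS_{\tilde z_p}\tilde z_{p+1}$ are $\Cc$-opposite in $\CEstgZ$ by choice of geodesic, and $\Psi$ transports this $\Cc$-oppositeness to $\TangS_{\xE_p}\Es$ since a neighbourhood of $\tilde z_p$ embeds isometrically into a flat of $\Es$.

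The core of the argument is hypothesis~(ii), which must be checked whenever $[\xE_{p-1},\xE_p]$ is singular. Such singular segments arise from the three singular sides of a triangle $\cellCEstgZ^\tau$, from a coordinate axis of an edge-parallelogram $\cellCEstgZ^\e$, or from a degenerate (segment) edge cell. In each case my plan is to identify the ideal chamber $(\p,\D)$ of $\Es$ (supplied by $\Fdev$) that controls the singular direction, and to exploit the splitting formula~\eqref{eq- projections and Busemann cocycle} of the $\Aa$-valued Busemann cocycle through the transverse trees $\Es_\p$ and $\Es_\D$. The requirement that $\TangS_{\xE_p}\xE_{p-2}$ and $\TangS_{\xE_p}\xE_{p+1}$ be $\Cc$-opposite then translates, via the geometric cross ratio $\geombir$, into a strict inequality among the geometric FG-invariants sitting at $\xE_p$. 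Hypothesis (\HypH) ($\gEprime_\e\leq 0$) places each edge flag on the correct side of the adjacent transverse tree, while (\HypEdgeSep) ($-\gEp_\ei-\gEp_\eii<\gZmoins_\tau$ and its dual) guarantees that the two edge contributions at a triangle are dominated by its triple ratio. A case analysis parametrised by the three configurations of Remark~\ref{rema- leftshiftingedge e : three cases} and by the two possible types of singular direction closes the verification. I expect the case of a singular segment interior to a non-degenerate edge strip, where the two transverse trees at $\p$ and $\D$ must be controlled simultaneously and both edge invariants enter, to be the main obstacle.

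Once global $\Cc$-preservation is established, the corollaries are direct. For item~(1), any geodesic representative of $\ga\in\Ga$ in $\CEstgZ$ is an axis whose $\Psi$-image is an axis of $\rho(\ga)$ in $\Es$, yielding equality of $\Cc$-translation vectors and hence, by applying $\normeuc{\cdot}$ and $\normH$, of Euclidean and Hilbert translation lengths. For item~(2), $\Cd$ refines $\deuc$ on $\Es$, so $\deuc(\Psi\tilde x,\Psi\tilde y)=\normeuc{\Cd(\tilde x,\tilde y)}\leq \deuc_{\CEstgZ}(\tilde x,\tilde y)$ shows $\Psi$ is $1$-Lipschitz; the reverse Lipschitz bound is an internal statement on the finite complex $\CEsgZ$ that follows from compactness of a fundamental domain for the free cocompact $\Ga$-action on $\CEstgZ$. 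Undistortion, faithfulness, and properness of $\rho$ are then inherited from the corresponding properties on $\CEstgZ$ via $\Psi$.
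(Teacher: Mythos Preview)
Your overall strategy---apply Proposition~\ref{prop- critere Cgeod avec sing} to the $\Psi$-image of the $\CAT(0)$ geodesic in $\CEstgZ$---matches the paper's, but two concrete steps in your plan would not go through as written.

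First, subdividing at cell-boundary crossings is the wrong choice. At a generic crossing the geodesic passes straight through (angle $=\pi$), so the image points satisfy $\xE_p\in[\xE_{p-1},\xE_{p+1}]$, which is \emph{explicitly excluded} by the hypotheses of Proposition~\ref{prop- critere Cgeod avec sing}. The paper instead takes the minimal subdivision for which each piece $\restrict{\geod}{[\tRR_p,\tRR_{p+1}]}$ has constant type of direction in $\bord\Cb$; the interior breakpoints are then forced to have link-angle $>\pi$, hence are genuine singularities of $\CEstgZ$ (boundary vertices $\bEdgeCE_\e$), and the image of each constant-type piece is a single affine segment in $\Es$. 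With this subdivision, hypothesis~(i) is indeed immediate from the local $\Cc$-isometry, as you say.

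Second, your case analysis misidentifies where the work lies and what (\HypEdgeSep) and (\HypH) buy. The role of (\HypEdgeSep) is not a numerical inequality fed into a Busemann computation at the end; it is used up front, in a combinatorial classification lemma, to show that any maximal singular subsegment $[\tilde z_p,\tilde z_{p+1}]$ of the geodesic is confined to at most two adjacent triangle-cells and the edge-cell between them, yielding exactly four configurations. In particular the case you flag as the ``main obstacle''---a singular segment interior to a non-degenerate edge strip---simply does not occur. After this reduction, three of the four cases are handled using only the flat-triangle geometry of Theorem~\ref{theo- triangle sing type triangle - local} and Lemma~\ref{lemm- two adjacent sing triangles lie in a flat}. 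Hypothesis (\HypH) enters only in the remaining case, where the edge cell is degenerate ($\gEp_\e=0$) and the singular segment runs from one triangle across into the adjacent one: there one must prove that the ideal chambers $\F_\j$ and $\F_\l$ are opposite at the shared vertex $\vstEi$, and this is precisely what $\gEprime_\e\leq 0$ delivers, via a computation in the transverse tree $\Es_{\p_\i}$.
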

Note that in general  we do not have 
$\elleuc(\rho(\ga))=\elleuc(\ga,\CEsgZ)$.

\begin{rema}
  \label{rema- hyp (Edgeseparating)}
  Hypothesis (\HypEdgeSep) means geometrically that, 
  in the $\typeA_2$-complex $\CEsgZ$,
  a singular segment
  entering a triangle $\cellCEsgZ^\tau$ from one adjacent edge cell
  $\cellCEsgZ^\e$ does not extend outside $\cellCEsgZ^\tau$ 
  (see left side of the figure \ref{fig- edgeseparating}).
  We will then say that $\gFGp$ is {\em \edgeseparating{}}.

  \begin{figure}[h]
    \includegraphics[scale=0.9]{figures/CEsgZ_edgeseparating_tau_pos.fig}
    \hskip 5em
    \includegraphics[scale=0.9]{figures/CEsgZ_nonedgeseparating.fig}
    \caption{\Edgeseparating{} (on the left) vs \Nonedgeseparating{}
      (on the right)
      (in the case where $\gZ_\tau    >0$)} 
    \label{fig- edgeseparating}
  \end{figure}

\end{rema}
\subsection{Proof of Theorem \ref{theo- local Cgeod surface}}
Let $\rho:\Ga \to \Aut(\Es)$ be an action, and
$\Fdev : \Farey{\Sf} \to \bordF\Es$ be a $\rho$-equivariant map.
Suppose that $\Fdev$ is transverse to the  
ideal triangulation $\T$ of $\Sf$.
For each pair $(\i,\j)$ of distinct points in the Farey set
$\Farey{\Sf}$ of $\Sf$, we denote by $\App_\ij$ the flat in $\Es$ joining
$\Fdev_\i$ and $\Fdev_\j$.
Suppose first that 
(\HypFlatTriangles) holds, that is: 
$\gZprime_\tau
\leq 0$
for each triangle $\tau$ in  $\T$.

Let $\tau=(\i,\j,\k)$ be a triangle in $\Tt$.
Since $\gZprime_\tau=\geomtrip(\F_\i,\F_\j,\F_\k)
\leq 0$, 
we have the following properties for the triple  $(\F_\i,\F_\j,\F_\k)$
which are proved 
in \cite{ParTriples}
(Theorem 2), and depicted in Figure \ref{fig- the flat singular triangle}. 

\begin{theo}
  \label{theo- triangle sing type triangle - local} 
  The intersection of the two flats $\App(\p_\i,\p_j,\p_k)$ and
  $\App(\D_\i,\D_j,\D_k)$ is a flat singular triangle $\Delta=\Delta_\tau$
  with vertices 
  $\vstEi=\vstEi(\tau)$, $\vstEj=\vstEj(\tau)$ and $\vstEk=\vstEk(\tau)$
  such that:
  \begin{enumerate}
  \item
    %
    The Weyl chamber from $\vstEi$ to $\F_\i$ is $\Aij\cap\Aik$ ;

  \item In any marked flat $\f_\ij:\Aa\to \Aij$ sending
    $\bord\Cc$ to $\F_\j$, we  have  in simple roots coordinates
    $$\ovra{\vstEi\vstEj}=(\gZplus_\tau,\gZmoins_\tau)\; ;$$

  \item When $\Delta$ is not reduced to a point (i.e. when $\gZ_\tau
    \neq 0$),
    then $\Delta$ and $\F_\i$ define
    opposite chambers $\TangS_{\vstEi} \Delta$ and $\TangS_{\vstEi} \F_\i$ at $\vstEi$.

  \end{enumerate}
\end{theo}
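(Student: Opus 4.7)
The plan is to construct the three vertices $\vstEi, \vstEj, \vstEk$ independently as apices of Weyl-chamber intersections, to compute their pairwise $\Cc$-distances using Busemann cocycles and geometric cross ratios in transverse trees, and then to assemble them into the claimed flat singular triangle coinciding with the intersection of the two ``big'' flats. First I would note that each pair of opposite ideal flags $(\F_\ii, \F_\jj)$ determines a unique flat $\App_\ij \subset \Es$; since the two flats $\App_\ij$ and $\App_\ik$ share the ideal chamber $\F_\i$, in a type-$\typeA_2$ Euclidean building their intersection is a closed Weyl chamber pointing to $\F_\i$, and I would define $\vstEi$ to be its apex. This gives property (i) immediately, with the two bounding singular rays of the chamber $\App_\ij\cap\App_\ik$ going respectively to $\p_\i$ and $\D_\i$.

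For property (ii), both $\vstEi$ and $\vstEj$ lie in $\App_\ij$. I would fix a marked flat $\f_\ij:\Aa\to \App_\ij$ sending $\bord\Cc$ to $\F_\j$ and compute the simple root coordinates of $\ovra{\vstEi\vstEj}=\Bus{\F_\j}{\vstEi}{\vstEj}$ via \eqref{eq- projections and Busemann cocycle}, reducing them to signed Busemann distances in the two transverse trees $\Es_{\D_\j}$ and $\Es_{\p_\j}$. In each such tree, I would identify the projections of $\vstEi$ and $\vstEj$ as centers of explicit ideal tripods determined by the ideal points and lines attached to the three flags $\F_\i,\F_\j,\F_\k$; the tripod characterization \eqref{eq- geombir and centers of tripods} of the geometric cross ratio on a tree would then turn each signed distance into a geometric cross ratio, and via the cocycle identity these would reduce to cross ratios entering the geometric triple ratio of $(\F_\i, \F_\j, \F_\k)$. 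Under (\HypFlatTriangles), i.e.\ $\gZprime_\tau \leq 0$, I would verify that these evaluate to exactly $\gZplus_\tau$ and $\gZmoins_\tau$.

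Finally, for the flat triangle structure and property (iii): from (ii) each edge $[\vstEi, \vstEj]$ is a singular segment (of type $\stypep$ if $\gZmoins_\tau=0$ and of type $\stypeD$ if $\gZplus_\tau=0$) lying in $\App_\ij$, and the three such segments fit into a flat equilateral singular triangle $\Delta$ contained in $\App(\p_\i,\p_\j,\p_\k) \cap \App(\D_\i,\D_\j,\D_\k)$; that $\Delta$ exhausts the whole intersection can be checked by projecting both ``big'' flats to the three transverse trees at $\p_\i, \p_\j, \p_\k$ and comparing fibers. For the opposition statement when $\gZ_\tau\neq 0$: the germs at $\vstEi$ of the segments $[\vstEi, \vstEj]$ and $[\vstEi, \vstEk]$ together span the chamber $\TangS_{\vstEi}\Delta$, and each of these germs is diametrically opposite in the spherical link $\TangS_{\vstEi}\Es$ to one of the two bounding singular rays of $\App_\ij \cap \App_\ik$ (those going to $\p_\i$ and $\D_\i$), so $\TangS_{\vstEi}\Delta$ is opposite to $\TangS_{\vstEi}\F_\i$. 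The main obstacle will be the identification in (ii): carefully matching which tripod center in each transverse tree yields which simple root coordinate, and pinning down the resulting cross ratios as $\gZplus_\tau$ and $\gZmoins_\tau$ through the three-way sign alternation of the triple $(\gZ_\tau, \gZprime_\tau, \gZpprime_\tau)$.
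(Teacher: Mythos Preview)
The paper does not prove this theorem; it is imported wholesale from the companion paper \cite{ParTriples} (there as Theorem~2), so there is no in-paper argument to compare against. Your sketch is therefore an independent attempt at a result established elsewhere.

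That said, your plan is sound and is very much in the spirit of how the present paper handles the neighbouring Proposition~\ref{prop- geometry of  adjacent triangles of type triangle}: reduce $\Aa$-valued Busemann cocycles to scalar Busemann functions in the transverse trees $\Es_{\p_\s}$ and $\Es_{\D_\s}$ via \eqref{eq- projections and Busemann cocycle}, identify the relevant projections as centers of ideal tripods, and read off geometric cross ratios via \eqref{eq- geombir and centers of tripods}. The paper itself invokes ``Lemma~17 of \cite{ParTriples}'' for exactly the tripod-center identification you flag as the main obstacle, so you have correctly located where the technical work concentrates.

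One point deserving more care than your sketch gives it: the claim that the three vertices assemble into a flat singular triangle which \emph{equals} $\App(\p_\i,\p_\j,\p_\k)\cap\App(\D_\i,\D_\j,\D_\k)$ is not automatic from knowing the pairwise $\Cc$-distances alone. You must show both containments; placing each $\vstEs$ in both big flats is already a nontrivial step, and your ``comparing fibers'' argument for the reverse inclusion is precisely where the hypothesis $\gZprime_\tau\leq 0$ enters essentially. Without it the intersection of the two big flats is not a flat triangle but a tripod --- this dichotomy (flat triangle versus tripod) is the organizing theme of \cite{ParTriples}, and is the reason the present paper labels the hypothesis (\HypFlatTriangles).
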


\begin{figure}[h]
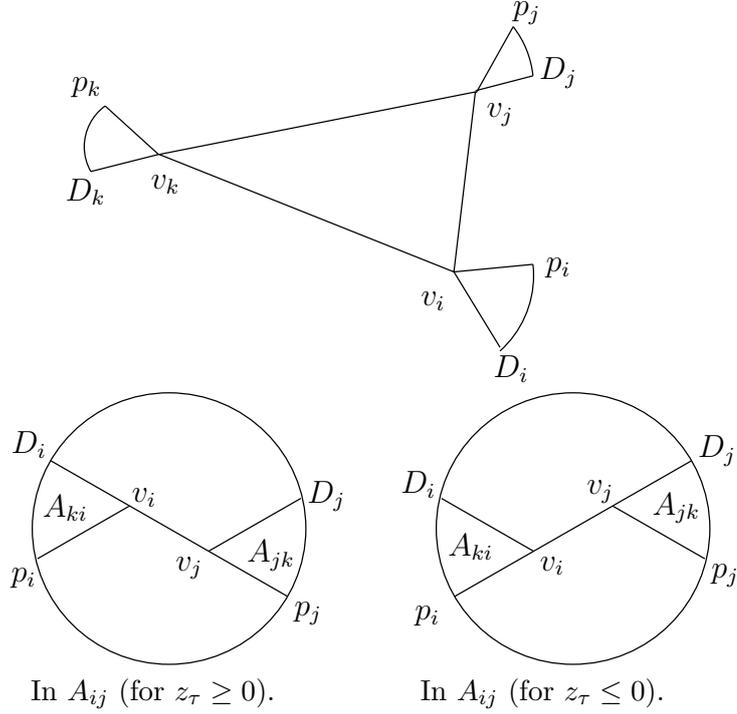

\includegraphics{figures/Es_triple_type_triangle_simplif.fig}

\hfill
\hfill
  \begin{minipage}[b]{0.3\linewidth}
\centering
\includegraphics{figures/Es_Aij_triangle_pos.fig}

{In $\Aij$ (for $\gZ_\tau \geq 0$).}
  \end{minipage}
\hfill
  \begin{minipage}[b]{0.3\linewidth}
\centering
\includegraphics{figures/Es_Aij_triangle_neg.fig}

{In $\Aij$ (for $\gZ_\tau \leq 0$).}
  \end{minipage}
\hfill
\hfill

\caption{The flat singular triangle
  $\Delta_\tau=(\vstE_\i,\vstE_\j,\vstE_k)$ associated with $\tau$.}
\label{fig- the flat singular triangle}
\end{figure}

\medskip

We now study the behaviour of two adjacent triangles, in particular
how edge invariants  measure  the shift between 
$\Delta_\tau$, $\Delta_\taup$
along the common flat.
Let $\tau=(\i,\j,\k)$ and $\taup=(\k,\l,\i)$, 
be a pair of adjacent triangles in $\Tt$
(where $(\i,\j,\k,\l)$ are positively ordered), and denote by $\e$ the
common edge $(\k,\i)$. 
Denote by $\vstEi=\vstEi(\tau)$, $\vstEk=\vstEk(\tau)$, 
$\vstEpi=\vstEi(\taup)$ and $\vstEpk=\vstEk(\taup)$
the particular points 
in the flat $\App_\ki$ joining $\F_\k$ to $\F_\i$
defined by each of the two adjacent triples.
Let  $\f_\e:\Aa\to \App_\ki$ be a marking of the flat $\App_\ki$ 
sending $\bord\Cc$ to $\Fi$. 
By Theorem \ref{theo- triangle sing type triangle - local}
and the invariance by cyclic permutation, 
in the marked flat $\f_\e$, we have
in simple roots coordinates
$$\ovra{\vstEk\vstEi}=(\gZplus_\tau,\gZmoins_\tau)
\mbox{ and }
\ovra{\vstEpk\vstEpi}=(\gZmoins_\taup,\gZplus_\taup)$$
in particular $\ovra{\vstEk\vstEi}$ and $\ovra{\vstEpk\vstEpi}$ 
belong to $\Cb$.

\begin{prop}[Geometric interpretation of edge parameters]
  \label{prop- geometry of  adjacent triangles of type triangle}
  In any marked flat $\f_\e:\Aa\to \App_\ki$ 
  sending $\bord\Cc$ to $\Fi$,  we have
  $$\ovra{\vstEi\vstEpk} =(\gEp_\eb, \gEp_\e)$$
  in the basis of simple roots of $\Aa$.
\end{prop}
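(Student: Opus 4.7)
The plan is to compute $\ovra{\vstEi\vstEpk}$ in simple roots coordinates by decomposing it along the two simple root directions via the Busemann cocycle formula \eqref{eq- projections and Busemann cocycle} for the ideal chamber $\F_\i=(\p_\i,\D_\i)$:
\[
\rac1(\ovra{\vstEi\vstEpk}) = \Bus{\p_\i}{\projE_{\D_\i}(\vstEi)}{\projE_{\D_\i}(\vstEpk)},
\quad
\rac2(\ovra{\vstEi\vstEpk}) = \Bus{\D_\i}{\projE_{\p_\i}(\vstEi)}{\projE_{\p_\i}(\vstEpk)}.
\]
This reduces the problem to identifying these four projections as specific tripod centers in the transverse trees $\Es_{\p_\i}$ and $\Es_{\D_\i}$, and then matching the resulting Busemann cocycles with the edge cross ratios $\gEp_\e$ and $\gEp_\eb$ via the identity \eqref{eq- geombir and centers of tripods}.

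I would first locate $\projE_{\p_\i}(\vstEi)$. By Theorem~\ref{theo- triangle sing type triangle - local} applied to $\tau$ at vertex $\i$, the point $\vstEi$ is the apex of the Weyl chamber $\Aij\cap\Aik$ going to $\F_\i$. The flats $\Aij$ and $\Aik$ project in $\Es_{\p_\i}$ onto the geodesics from $\D_\i$ to $\p_\i\p_\j$ and from $\D_\i$ to $\p_\i\p_\k$ respectively (the two type-$\stypeD$ ideal points of each flat that are adjacent to $\p_\i$), and their shared Weyl chamber projects to the common initial ray ending at $\D_\i$. Hence $\projE_{\p_\i}(\vstEi)$ is the branching point, i.e.\ the center of the tripod $\{\D_\i,\p_\i\p_\j,\p_\i\p_\k\}$; analogously $\projE_{\D_\i}(\vstEi)$ is the center of $\{\p_\i,\p_\ij,\p_\ik\}$ in $\Es_{\D_\i}$, where $\p_\ij=\D_\i\cap\D_\j$.

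The main step is then to locate $\projE_{\p_\i}(\vstEpk)$ by exhibiting three singular rays from $\vstEpk$ whose projections reach three distinct boundary points of $\Es_{\p_\i}$. Two rays come from $\App_{\k\i}$, contributing the boundary directions $\D_\i$ and $\p_\i\p_\k$; the crucial third ray comes from the other flat $\App_{\k\l}$, namely the singular ray from $\vstEpk$ to the type-$\stypep$ ideal point $\p_{\k\l}=\D_\k\cap\D_\l$, whose projection to $\Es_{\p_\i}$ ends at the boundary point $\p_\i\p_{\k\l}$. This identifies $\projE_{\p_\i}(\vstEpk)$ as the center of the tripod $\{\D_\i,\p_\i\p_\k,\p_\i\p_{\k\l}\}$. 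A parallel argument in $\Es_{\D_\i}$, using the flat $\App(\D_\k,\D_\l,\D_\i)$ (which contains $\Delta_\taup$ by Theorem~\ref{theo- triangle sing type triangle - local} applied to $\taup$) to supply the third boundary direction on $\D_\i$, identifies $\projE_{\D_\i}(\vstEpk)$ with the corresponding tripod center.

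Substituting these identifications into the cross ratio formula \eqref{eq- geombir and centers of tripods} with $(\xi_1,\xi_2,\xi_3,\xi_4)=(\D_\i,\p_\i\p_\j,\p_\i\p_\k,\p_\i\p_{\k\l})$ yields $\rac2(\ovra{\vstEi\vstEpk}) = \geombir(\D_\i,\p_\i\p_\j,\p_\i\p_\k,\p_\i\p_{\k\l}) = \gEp_\e$, and an entirely parallel computation (after rewriting $\gEp_\eb$ as a cross ratio of four points on $\D_\i$ via the perspectivity from $\p_\k$) gives $\rac1(\ovra{\vstEi\vstEpk}) = \gEp_\eb$. The main technical obstacle is to verify that the three singular rays chosen at $\vstEpk$ yield pairwise distinct \emph{initial directions} at $\projE_{\p_\i}(\vstEpk)$ (and at $\projE_{\D_\i}(\vstEpk)$) in the tree, so that the projection sits exactly at the tripod center rather than further out on one of its legs; this uses that $\vstEpk$ is itself the apex of the Weyl chamber $\App_{\k\l}\cap\App_{\k\i}$ going to $\F_\k$, together with the standing hypothesis $(\HypFlatTriangles)$ ensuring the flat triangle $\Delta_\taup$ is properly positioned.
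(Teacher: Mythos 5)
Your proposal follows essentially the same route as the paper's proof: compute $\rac1$ and $\rac2$ of $\ovra{\vstEi\vstEpk}$ via the projection formula \eqref{eq- projections and Busemann cocycle} to the transverse trees $\Es_{\D_\i}$ and $\Es_{\p_\i}$, identify the projections of $\vstEi$ and $\vstEpk$ with the appropriate tripod centers, and conclude with \eqref{eq- geombir and centers of tripods} together with a perspectivity (and invariance under double transposition) to recognize $\gEp_\eb$. The only difference is that the identification of these projections with the tripod centers --- exactly the ``technical obstacle'' you flag at the end but do not carry out --- is what the paper delegates to Lemma~17 of \cite{ParTriples}; your sketch (flats project onto lines of the tree, the shared Weyl chamber onto a common ray) does not by itself rule out that the projected lines branch strictly beyond the projection of the chamber apex, so this step must either be completed or replaced by that citation.
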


\begin{proof}
  We project in the transverse tree at infinity $\Es_{\p_\i}$
  in direction $\p_\i$ by
  $\projE_{\p_\i}:\Es\to\Es_{\p_\i}$.
  We denote by
  $\oT$ and $\oTp$  the respective projections of 
  $\p_\i\p_\j$ and $\p_\i\p_{\kl}$
  (seen as points of $\bordinf\Es_{\p_\i}$) 
  on the line from $\D_\i$ to $\p_\i\p_k$ in $\Es_{\p_\i}$. 
  Then we have $\projE_{\p_\i}(\vstEi)=\oT$ and $\projE_{\p_\i}(\vstEpk)=\oTp$ 
  by 
  Lemma 17 of \cite{ParTriples}.
  Thus we have 
  (by \eqref{eq- projections and Busemann cocycle}
  and \eqref{eq- geombir and centers of tripods})
  $$\begin{array}{ccl}
    \rac2( \ovra{\vstEi \vstEpk} )
    &=&\Bus{\D_\i}{ \projE_{\p_\i}(\vstEi) }{ \projE_{\p_\i}(\vstEpk) }\\
    &=&\Bus{\D_\i}{\oT}{\oTp}\\
    &=&\geombir(\D_\i,\p_\i\p_\j,\p_\i\p_\k,\p_\i\p_\kl)\\
    &=&\gEp_\e
 \;. \end{array}$$

  Similarly, projecting in the transverse tree $\Es_{\D_\i}$ 
  and denoting by $\oTet$, $\oTetp$
  the respective projections of 
  $\D_\i\cap\D_\j$ and $\D_\i\cap\D_{\kl}$
  (seen as points of $\bordinf\Es_{\D_\i}$) 
  on the line from $\p_\i$ to $\D_\i\cap\D_k$ in the tree $\Es_{\D_\i}$,
  we  have
  $$\begin{array}{ccl}
    \rac1(\ovra{\vstEi \vstEpk})
    &=&\Bus{\p_\i}{\projE_{\D_\i}(\vstEi)}{\projE_{\D_\i}(\vstEpk)}\\
    &=&\Bus{\p_\i}{\oTet}{\oTetp}\\
    &=&\geombir(\p_\i,\D_\i\cap\D_\j,\D_\i\cap\D_\k,\D_\i\cap\D_\kl)\\
    &=&\geombir(\D_\k,\p_\k\p_\l,\p_\k\p_\i,\p_\k\p_\ij)
    =\gEp_\eb
\;. \end{array}
  $$
\end{proof}

In particular, we have the following geometric interpretation in the
building $\Es$ of the
hypothesis ``\leftshiftingedge{} $\e$''.

\begin{coro}
  \label{coro-  geom interpretation of leftshiftingedge}
  The three following assertions are equivalent
  \begin{enumerate}
  \item  $\gFGp$ is \leftshiftingedge{} $\e$ ;
  \item In any marked flat $\Aa\to \App_\ki$ 
    sending $\bord\Cc$ to $\Fi$,
    the vectors $\ovra{\vstEi\vstEpi}$ and $\ovra{\vstEk\vstEpk}$
    are in $\Cc$; 

  \item
    $\vstEi$ is in the open Weyl chamber from $\vstEpi$ to
    $\Fdev_\i$,\\
    and
    $\vstEpk$ is in the open Weyl chamber from $\vstEk$ to $\Fdev_\i$.
  \end{enumerate}
\end{coro}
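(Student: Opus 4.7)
The plan is to prove all three equivalences essentially by one explicit coordinate calculation carried out inside the flat $\App_\ki$. I will fix a marking $\f_\e:\Aa\to \App_\ki$ sending $\bord\Cc$ to $\F_\i$, work in simple roots coordinates, and assemble the three vector identities already available in the paper: Theorem \ref{theo- triangle sing type triangle - local} applied to $\tau=(\i,\j,\k)$ and, via cyclic invariance, to $\taup=(\k,\l,\i)$ gives
\[
\ovra{\vstEk\vstEi}=(\gZplus_\tau,\gZmoins_\tau),\qquad
\ovra{\vstEpk\vstEpi}=(\gZmoins_\taup,\gZplus_\taup),
\]
while Proposition \ref{prop- geometry of  adjacent triangles of type triangle} supplies the shift along the common flat
\[
\ovra{\vstEi\vstEpk}=(\gEp_\eb,\gEp_\e).
\]

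Next I would simply add these vectors in the flat to obtain
\[
\ovra{\vstEk\vstEpk}=\ovra{\vstEk\vstEi}+\ovra{\vstEi\vstEpk}
=(\gZplus_\tau+\gEp_\eb,\ \gZmoins_\tau+\gEp_\e),
\]
\[
\ovra{\vstEi\vstEpi}=\ovra{\vstEi\vstEpk}+\ovra{\vstEpk\vstEpi}
=(\gEp_\eb+\gZmoins_\taup,\ \gEp_\e+\gZplus_\taup).
\]
The open chamber $\Cc$ is, in simple roots coordinates, the set of vectors with both coordinates strictly positive. Demanding that both displayed vectors lie in $\Cc$ therefore amounts to the four strict inequalities
\[
\gEp_\e>-\gZmoins_\tau,\quad \gEp_\e>-\gZplus_\taup,\quad
\gEp_\eb>-\gZplus_\tau,\quad \gEp_\eb>-\gZmoins_\taup,
\]
which is literally the content of Definition \ref{def- hyp (LeftShift) sur gZ}. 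This settles the equivalence (i) $\Leftrightarrow$ (ii).

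For (ii) $\Leftrightarrow$ (iii), the translation is purely a matter of unwinding definitions: since the marking $\f_\e$ carries $\bord\Cc$ to $\F_\i$, for any two points $P,Q$ of $\App_\ki$ the condition $\ovra{PQ}\in\Cc$ is equivalent to saying that $Q$ lies in the open Weyl chamber based at $P$ pointing to $\F_\i$. Applying this to the pair $(\vstEk,\vstEpk)$ and to the pair $(\vstEi,\vstEpi)$ gives exactly the two chamber-containment statements in (iii). I do not anticipate any real obstacle: the proof is a short bookkeeping argument concatenating three previously established vector formulas, so the only care needed is consistent orientation conventions and the reading of the \leftshifting{} inequalities off the coordinates.
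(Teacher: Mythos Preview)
Your argument is correct and is precisely the immediate verification the paper intends: the corollary is stated without proof as a direct consequence of Theorem \ref{theo- triangle sing type triangle - local} and Proposition \ref{prop- geometry of  adjacent triangles of type triangle}, and your coordinate computation is exactly how one unpacks it.

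One small point you glossed over in the last step. Your translation principle is right: in the marked flat sending $\bord\Cc$ to $\F_\i$, the condition $\ovra{PQ}\in\Cc$ means that $Q$ lies in the open Weyl chamber from $P$ toward $\F_\i$. Applying this to $(\vstEk,\vstEpk)$ gives the second clause of (iii) verbatim. But applying it to $(\vstEi,\vstEpi)$ yields ``$\vstEpi$ is in the open Weyl chamber from $\vstEi$ to $\F_\i$'', whereas the first clause of (iii) as printed has the roles of $\vstEi$ and $\vstEpi$ reversed (equivalently, $\F_\i$ should read $\F_\k$ there). This is a typo in the statement, not a flaw in your method; your computation in fact shows which of the two readings is the correct one. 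It would have been worth flagging rather than asserting that the translation ``gives exactly'' (iii).
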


The following lemma establishes that, if 
$\gFGp$ is \leftshiftingedge{} $\e$
then the associated adjacent singular triangles $\Delta_\tau$,
$\Delta_\taup$ 
lie in a common flat. 

\begin{lemm}
  \label{lemm- two adjacent sing triangles lie in a flat}%
  %
  Let $\tau=(\i,\j,\k)$ and $\taup=(\k,\l,\i)$
  be a pair of adjacent triangles in $\Tt$
  (where $(\i,\j,\k,\l)$ are positively ordered), and denote by $\e$ the
  common edge $(\k,\i)$. 
  %
  %
  Suppose that $\gFGp$ is \leftshiftingedge{} $\e$.
  %
  %
  Let $\Ch$ be any Weyl chamber with tip $\vstEi$ containing $\Delta_\tau$, 
  and 
  $\Chpr$ be any Weyl chamber with tip $\vstEpk$ containing
  $\Delta_\taup$. 
  There exists a marked flat $\f:\Aa\to \Es$ 
  such that $\f(0)=\vstEi$, 
  $\f(\aA)=\vstEpk$ with $\aA=(\gEp_\eb,\gEp_\e)$ in simple roots coordinates, 
  $\f(-\Cb)=\Ch$ and $\f(\aA+\Cb)=\Chpr$.
  %
  %
  %
\end{lemm}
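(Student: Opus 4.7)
The plan is to produce a flat $\App$ of $\Es$ containing both $\Ch$ (at $\vstEi$) and $\Chpr$ (at $\vstEpk$) in the prescribed relative position, then to mark it by a suitable $\f:\Aa\to\App$. The natural scaffold is the flat $\App_\ki$ joining $\F_\k$ to $\F_\i$, marked by $\f_\e:\Aa\to\App_\ki$ sending $\bord\Cc$ to $\F_\i$. By Proposition \ref{prop- geometry of  adjacent triangles of type triangle}, this marking already places the two base points correctly, with $\f_\e(0)=\vstEi$ and $\f_\e(\aA)=\vstEpk$, where $\aA=(\gEp_\eb,\gEp_\e)$. What may fail for $\f_\e$ itself is the identification of $\Ch$ with the Weyl chamber $\f_\e(-\Cb)$, and of $\Chpr$ with $\f_\e(\aA+\Cb)$: these may differ even though corresponding germs agree.

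I would carry out the construction in two steps. First, I translate the chamber data into the spherical buildings at $\vstEi$ and $\vstEpk$. By Theorem \ref{theo- triangle sing type triangle - local}(3) applied to $\tau$, the germ $\TangS_{\vstEi}\Ch$ is a closed chamber of $\TangS_{\vstEi}\Es$ opposite to $\TangS_{\vstEi}\F_\i$; applied to $\taup$ at its vertex $\vstEi(\taup)=\vstEpk$, the germ $\TangS_{\vstEpk}\Chpr$ is opposite to $\TangS_{\vstEpk}\F_\k$, which, since $\vstEpk\in\App_\ki$ and $\F_\i,\F_\k$ are the two chambers at infinity of this flat, coincides with the germ of $\f_\e(\aA-\Cb)$. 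Second, I would glue using the building axiom that two opposite chambers in a spherical building at a point lie in a unique apartment: at $\vstEi$, combining $\Ch$ with the Weyl chamber $\f_\e(\Cb)=\App_\ij\cap\App_\ik$ (Theorem \ref{theo- triangle sing type triangle - local}(1)) produces a unique flat $\App$ containing both, with chambers at infinity $\F_\i$ and the chamber at infinity of $\Ch$, which is opposite to $\F_\i$. In case 1 of Remark \ref{rema- leftshiftingedge e : three cases} we have $\aA\in\Cb$, so $\vstEpk$ already lies in $\f_\e(\Cb)\subset\App$, and the Weyl chamber at $\vstEpk$ in the $+\Cc$ direction of $\App$ coincides with $\Chpr$ by uniqueness of a Weyl chamber with prescribed germ.

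The main obstacle will be cases 2 and 3 of Remark \ref{rema- leftshiftingedge e : three cases}, where $\aA\notin\Cb$, the point $\vstEpk$ escapes the Weyl chamber $\vstEi+\Cb$ of $\App_\ki$, and the inclusion $\vstEpk\in\App$ is no longer automatic from the chamber gluing at $\vstEi$ alone. The \leftshifting{} hypothesis, interpreted geometrically through Corollary \ref{coro-  geom interpretation of leftshiftingedge} as $\ovra{\vstEi\vstEpi},\ovra{\vstEk\vstEpk}\in\Cc$, is what would rescue the argument: it forces $\vstEpk$ to lie in the open Weyl chamber from $\vstEk$ towards $\F_\i$, so that via the shared Weyl chamber $\f_\e(\Cb)$ together with axiom (A3') applied to a ray of $\Ch$ issuing from $\vstEi$ and a ray of $\Chpr$ issuing from $\vstEpk$, one can produce a single flat through both points and identify it with $\App$. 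The technical core is to verify this consistency at infinity, namely that the chambers at infinity of $\Ch$ and of $\Chpr$ are opposite chambers of one flat, using the Busemann cocycle together with the transverse trees $\Es_{\p_\i}$ and $\Es_{\D_\i}$ and the relations already established in Proposition \ref{prop- geometry of  adjacent triangles of type triangle}.
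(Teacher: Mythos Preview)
Your plan has the right scaffolding (the flat $\App_\ki$, Theorem \ref{theo- triangle sing type triangle - local}, and Proposition \ref{prop- geometry of  adjacent triangles of type triangle}), but there is a genuine gap in how you pass from germs to actual Weyl chambers, and it bites already in your ``easy'' case.

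In case 1 you build a flat $\App$ containing $\Ch$ and asymptotic to $\F_\i$, observe $\vstEpk\in\App$, and then claim that the Weyl chamber $\vstEpk+\Cb$ of $\App$ equals $\Chpr$ ``by uniqueness of a Weyl chamber with prescribed germ''. This is false in two ways. First, you have not checked that the germ $\TangS_{\vstEpk}(\vstEpk-\Cb)$ taken in $\App$ equals $\TangS_{\vstEpk}\F_\k$; this needs an argument (agreement of $\App$ with $\App_\ki$ near $\vstEpk$). Second, even if the germs matched, distinct Weyl chambers in a building can share a germ at their tip; $\Chpr$ is an \emph{arbitrary} Weyl chamber containing $\Delta_\taup$, not a canonical one, so no uniqueness principle hands it to you. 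Your proposed fix for cases 2--3 (axiom (A3') on rays plus ``consistency at infinity'' via Busemann cocycles and transverse trees) inherits the same problem and is moreover circular: showing that $\partial_\infty\Ch$ and $\partial_\infty\Chpr$ are opposite chambers of a single flat is essentially the conclusion you want.

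What is missing is a \emph{two-step} flat construction that uses the \emph{other} vertices of the singular triangles, $\vstEk\in\Delta_\tau\subset\Ch$ and $\vstEpi\in\Delta_\taup\subset\Chpr$, to enlarge the region on which successive marked flats agree. Concretely: since $\TangS_{\vstEi}\Ch$ is opposite to $\TangS_{\vstEi}\F_\i$, there is a marked flat $\f_1$ with $\f_1(-\Cb)=\Ch$ and $\partial\f_1(\Cc)=\F_\i$; because $\vstEk\in\Ch\subset\f_1(\Aa)$ and $\f_1,\f_\e$ are very strongly asymptotic at $\F_\i$, they agree on the whole chamber $\vstAk+\Cb$ in $\Aa$. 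The \leftshifting{} hypothesis gives $\vstApk\in\vstAk+\Cc$, so a germ of $\vstApk-\Cc$ in $\f_1$ coincides with that in $\f_\e$; hence $\TangS_{\vstEpk}(\f_1(\vstApk-\Cb))=\TangS_{\vstEpk}\F_\k$, which is opposite to $\TangS_{\vstEpk}\Chpr$. Now repeat at $\vstEpk$: take the flat $\f$ with $\f(\vstApk+\Cb)=\Chpr$ and $\partial\f(-\Cc)=\partial_\infty\Ch$; this time $\vstEpi\in\Chpr\subset\f(\Aa)$ forces $\f=\f_1$ on $\vstApi-\Cb$, which contains $0$, hence $\f(-\Cb)=\f_1(-\Cb)=\Ch$. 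This is the mechanism you are missing, and it is precisely how the paper proceeds; no case split on the sign of $\gEp_\e,\gEp_\eb$ is needed.
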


\begin{proof}
  Let $\ch$, $\chpr$ be the boundaries at infinity of $\Ch$ and $\Chpr$.
  Let $\f_\e:\Aa\to \App_\ki$ be the marked flat sending $\bord\Cc$ to
  $\Fi$ and $0$ to $\vstEi$. 
  Let $\f_\e^{-1}(\vstEk)=\vstAk$, $\f_\e^{-1}(\vstEpi)=\vstApi$ 
  and $\f_\e^{-1}(\vstEpk)=\vstApk$.
  By 
  Proposition \ref{prop- geometry of  adjacent triangles of type triangle}
  and Theorem \ref{theo- triangle sing type triangle - local},
  we have that
  $\vstAk\in-\Cb$, $\vstApi\in\Cc$, $\vstApk=(\gEp_\eb,\gEp_\e)$ is in $\vstAk+\Cc$, 
  and $[0,\vstApk]$ is contained in  $(\vstAk+\Cb) \cap (\vstApi-\Cb)$.
  Since $\F_\i$ and $\Ch$ are  opposite at $\vstEi$
  (Theorem \ref{theo- triangle sing type triangle - local}),
  there exists a marked flat $\f_1:\Aa\to \App_1$ 
  sending $-\Cb$ to $\Ch$ and $\bord \Cc$ to $\F_\i$.
  Since  $\f_1$ and $\f_\e$ both sends $0$ to $\vstEi$ and $\bord \Cc$ to
  $\F_\i$, we have $\f_1=\f_\e$ on the convex subset
  $f_e^{-1}(f_1(\Aa))$, 
  which contains $\Cb$.
  Since $\vstEk=\f_\e(\vstAk)$ belongs to $\Delta_\tau$, 
  hence to $\App_1=f_1(\Aa)$, it implies that $\f_1=\f_\e$ on
  $\vstAk+\Cb$.
  Since $\vstApk \in \vstAk+\Cc$, $\f_1$ and $\f_\e$ coincide on a germ of 
  the Weyl chamber $\vstApk-\Cc$ at $\vstApk$.
  Then  the Weyl chambers 
$\Chpp=\f_1(\vstApk-\Cc)$ and $\f_\e(\vstApk-\Cc)$ 
  define the same chamber
  $\TangS_{\vstEpk}\ch=\TangS_{\vstEpk}\F_\k$ 
  in the space of directions at $\vstEpk$. 
  Therefore $\Chpp$ and $\Chpr$ are opposite at $\vstEpk$
  by Theorem \ref{theo- triangle sing type triangle - local}.
  Hence there exists a marked flat $\f:\Aa\to \App$ 
  sending $\vstApk+\Cb$ to $\Chpr$ and $-\bord \Cc$ to $\ch$.
  Since $\vstEpi=\f_\e(\vstApi)=\f_1(\vstApi)$ belongs to
  $\App$ and $\f$ and $\f_1$ are very strongly asymptotic on $-\Cc$,
  we have $\f=\f_1$ on $\vstApi-\Cb$. 
  In particular, $\f(-\Cc)=\f_1(-\Cc)=\Ch$. 
  Moreover $\f=\f_\e$ on $(\vstAk+\Cb) \cap (\vstApi-\Cb)$, 
  which contains  $[0,\vstApk]$.
\end{proof}

\label{s- Psi loc C isom}
From now on, we suppose that  $\gFGp$ is \leftshifting{}.
The next lemma formalizes the construction of the map $\Psi$, 
and is a straigthforward consequence of 
Theorem \ref{theo- triangle sing type triangle - local}
and  Proposition \ref{prop- geometry of  adjacent triangles of type
  triangle}.
We refer to Section \ref{ss- construction of the A2-complex from gZ}
for the definition of charts of $\CEstgZ$, and
we recall that $\vstCEi(\tau)$ is the $\i$-vertex of the
singular triangle associated with $\tau$
in the associated $\typeA_2$-complex $\CEstgZ$.

\begin{lemm}
  There exists a unique $\rho$-equivariant  map
  $\Psi : \CEstgZ \to \Es$ such that
  \begin{itemize}
  \item The map $\Psi$ sends $\vstCEi(\tau)$ to $\vstEi(\tau)$ 
    for all marked triangle $\tau=(\i,\j,\k)$ of $\Tt$ ;
    %
  \item For every chart $\chartCEst_\m : \cellAgZ^\m \to \cellCEstgZ^\m$ of $\CEstgZ$,
    the map $\Psi\circ \chartCEst_\m:\cellAgZ^\m \to \Es$ is the restriction
    of a marked flat. \qed
  \end{itemize}
\end{lemm}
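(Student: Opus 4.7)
The plan is to build $\Psi$ first on the disjoint union $\bigsqcup_{\m\in\orMPt}\cellAgZ^\m\times\{\m\}$, cell by cell via marked flats of $\Es$, and then verify that it descends through the equivalence relation $\sim$ defining $\CEstgZ$. Uniqueness and $\rho$-equivariance will essentially be forced by the construction.

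For each marked triangle $\tau=(\i,\j,\k)$ of $\Tt$, Theorem \ref{theo- triangle sing type triangle - local} (applicable thanks to (\HypFlatTriangles)) produces a flat singular triangle $\Delta_\tau$ with vertices $\vstEs(\tau)$ in a common flat of $\Es$, satisfying the coordinate identities $\ovra{\vstEk(\tau)\vstEi(\tau)}=(\gZplus_\tau,\gZmoins_\tau)$. Since these match the defining coordinates of $\vstAs^\tau$ in the model triangle $\cellAgZ^\tau$, there exists a unique marked flat $\f^\tau:\Aa\to\Es$ with $\f^\tau(\vstAs^\tau)=\vstEs(\tau)$ for $\s=\i,\j,\k$. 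Set $\Psi_{|\cellAgZ^\tau\times\{\tau\}}:=\f^\tau$. Similarly, for each oriented edge $\e=(\k,\i)$ with adjacent triangles $\tau,\taup$, invoke Lemma \ref{lemm- two adjacent sing triangles lie in a flat} (applicable because $\gFGp$ is \leftshifting{}) to obtain a marked flat $\f^\e:\Aa\to\Es$ that sends $0$ to $\vstEi(\tau)$, sends $(\gEp_\eb,\gEp_\e)$ to $\vstEpk(\taup)$, and, crucially, coincides with $\f^\tau$ on a Weyl chamber containing $\Delta_\tau$ and with $\f^\taup$ on a Weyl chamber containing $\Delta_\taup$. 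Set $\Psi_{|\cellAgZ^\e\times\{\e\}}:=\f^\e$.

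Next I verify compatibility with $\sim$. The triangle--edge identification along the subsegment $[\bEdgeAki^\tau,\vstAi^\tau]$ holds because Lemma \ref{lemm- two adjacent sing triangles lie in a flat} guarantees $\f^\e\equiv\f^\tau$ on a Weyl chamber containing this segment. The permutation identifications $\w_{\tau,\taup}$ are respected because the singular triangle $\Delta_\tau\subset\Es$ depends only on the unordered triangle (with consistent labeling of vertices by $\Farey{\Sf}$), so $\f^\taup\circ\w_{\tau,\taup}=\f^\tau$ follows from uniqueness of the marking sending three specified points to three specified points in a flat. The edge-reversal identification via $\wopp$ follows from applying the same lemma with $\e$ replaced by $\eb$ and observing that the intrinsic data of the two adjacent singular triangles, viewed from $\F_\k$ instead of $\F_\i$, produces the same flat marked by the opposition involution (cf.\ Theorem \ref{theo- triangle sing type triangle - local}(i) and the symmetry of Proposition \ref{prop- geometry of  adjacent triangles of type triangle}). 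The map $\Psi$ is $\rho$-equivariant because $\Fdev$ is $\rho$-equivariant and every construction $\Delta_\tau,\f^\tau,\f^\e$ depends naturally on the flags $\Fdev_\i,\Fdev_\j,\Fdev_\k,\Fdev_\l$.

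For uniqueness: on each triangle cell, requiring $\Psi\circ\chartCEst_\tau$ to be the restriction of a marked flat sending the three vertices $\vstAs^\tau$ to $\vstEs(\tau)$ fixes that marked flat (three non-collinear points in a flat determine its marking); on each edge cell, the values of $\Psi$ at $\vstEi(\tau)$ and $\vstEpk(\taup)$ together with the marked-flat requirement determine $\f^\e$, because by \leftshiftingness{} the vector $\ovra{\vstEi\vstEpk}=(\gEp_\eb,\gEp_\e)$ lies in $\Cb$ and a marked flat is determined by its value on a segment of regular direction. The only genuinely delicate step in the whole proof is the triangle--edge compatibility; everything else reduces to uniqueness of marked flats and the naturality of the preceding results. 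The main technical obstacle, therefore, is already packaged inside Lemma \ref{lemm- two adjacent sing triangles lie in a flat}, whose proof handles precisely the placement of two adjacent singular triangles in a single flat.
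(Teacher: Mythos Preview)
Your proof is correct and follows exactly the approach the paper indicates (the paper declares the lemma a ``straightforward consequence'' of Theorem~\ref{theo- triangle sing type triangle - local} and Proposition~\ref{prop- geometry of  adjacent triangles of type triangle} and omits all details, so you have simply carried those out). Two minor imprecisions are worth flagging. First, the marked flat $\f^\tau$ is not unique as a map $\Aa\to\Es$, since several flats of $\Es$ may contain $\Delta_\tau$; only its restriction to $\cellAgZ^\tau$ is unique, which is all the lemma requires. Second, in your uniqueness argument for edge cells you assert that $(\gEp_\eb,\gEp_\e)\in\Cb$ and has regular direction, but under (\HypLeftShift) this vector may have one nonpositive coordinate (cases (ii) and (iii) of Remark~\ref{rema- leftshiftingedge e : three cases}), and even when both are nonnegative one may vanish. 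This does not damage the conclusion: the restriction of any marked flat to a segment with prescribed endpoints is forced to be the geodesic in $\Es$ between those endpoints, and in the parallelogram case the restriction is pinned down by its values on the two nontrivial gluing segments with the adjacent triangle cells, which are already determined.
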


We now check that $\Psi$ is a local $\Cc$-isometry.
Let $\xCE$ be a point in $\CEstgZ$. Then either
there is a neighbourhood of $\xCE$ contained in some
$\cellCEstgZ^\tau\cup\cellCEstgZ^\e$ with $\e$ adjacent to $\tau$, on which 
$\Psi$ is a $\Cc$-isometry 
by Lemma \ref{lemm- two adjacent sing triangles lie in a flat},
or  $\xCE$ is the vertex $\vstCEtau$ 
of a singular triangle $\cellCEstgZ^\tau$ reduced to a point 
(i.e. with invariant $\gZ_\tau=0$). 
\smallskip

\noindent {\advance\linewidth by-5.5cm
\begin{minipage}{\linewidth}
    \hskip1em 
    In that case, 
    denote  by $(\i,\j,\k)$ the vertices of $\tau$ 
    and by $\es$ the oriented edge of $\tau$ 
    with terminal vertex $\s$, for $\s=\i,\j,\k$.
    A neighbourhood of $\xCE$ in $\CEstgZ$ is then given by 
    the union of the three segments $\cellCEstgZ^\es$, $\s=\i,\j,\k$.  
    The image by  $\Psi$ of $\cellCEstgZ^\es$ is then 
    a non trivial segment $[\vstEtau,\vsteE_\s]$, 
    contained in the Weyl chamber $\Ch_\s$ with vertex $\vstEtau$
    and boundary $\Fs$.
    The chambers $\Ch_\s$  are pairwise opposite at $\vstEtau$
    by Theorem \ref{theo- triangle sing type triangle - local}.
    Hence $\Psi$ is a local $\Cc$-isometry on the union of the three
    segments $\cellCEstgZ^\es$.
    \qed
  \end{minipage}
  \hfill
  \begin{minipage}{4cm}
    \centering
    \includegraphics[scale=0.5]{figures/%
      Es-shrinked-triangle-and-adjacent-edges.fig}
  \end{minipage}
}

\subsection{Proof of Theorem \ref{theo- global Cgeod surface}}
\label{ss- proof global Cisom}
Let $\xCE$, $\xCEp$ be two points of $\CEstgZ$. 
Let $\geod$ be the unique  geodesic 
from $\xCE$ to $\xCEp$ in $\CEstgZ$.
We are going to prove that the image
$\eta=\Psi\circ\geod$ of $\geod$ by $\Psi$ is a $\Cc$-geodesic path in
$\Es$, 
using the criterion 
in Proposition \ref{prop- critere Cgeod avec sing}.
Then we will have $\Cd(\Psi(\xCE),\Psi(\xCEp))=\ellC(\eta)$,
which is equal to $\ellC(\geod)$
since $\Psi$ preserves the $\Cc$-length of paths,
hence equal to $\Cd(\xCE,\xCEp)$
by definition of the $\Cc$-distance  $\Cd$ in $\CEstgZ$, which
concludes.

Let $\tRR_0=0 <\tRR_1< \cdots < \tRR_\IndPath=1$ be a minimal subdivision of
$[0,1]$ such that $\restrict{\geod}{[\tRR_\indPath,\tRR_{\indPath+1}]}$ 
has constant type of direction in $\bord\Cb$, 
and let $\xCE_\indPath=\geod(\tRR_\indPath)$ and $\yE_\indPath=\Psi(\xCE_\indPath)$.
For $0<\indPath<\IndPath$,
since $\lengthangle_{\xCE_{\indPath}}({\xCE_{\indPath-1}},{\xCE_{\indPath+1}})>\pi$, 
the point $\xCE_{\indPath}$ is a singularity of $\CEstgZ$, 
hence by construction of $\CEstgZ$ it is a boundary point of the form
$\xCE_{\indPath}=\bCE_{\e}$ for some  oriented edge $\e$ of $\Tt$.
Suppose that  for some $0<\indPath<\IndPath$ 
the (constant) type of the segment $[\xCE_\indPath, \xCE_{\indPath+1}]$ 
is singular. 
We have to prove that the directions 
$\TangS_{\yE_{\indPath}}\yE_{\indPath+2}$ and
$\TangS_{\yE_{\indPath}}\yE_{\indPath-1}$ are $\Cc$-opposite, 
i.e. contained in
two opposite closed chambers at $\yE_\indPath$.

We are first going to show that 
the \edgeseparating{} hypothesis (\HypEdgeSep) 
allows us to reduce our study to the case of two adjacent triangles.

\begin{lemm}
 There exist two adjacent triangles $\tau=(\i,\j,\k)$ and
$\taup=(\k,\l,\i)$ in $\Tt$ such that
the segment $[\xCE_{\indPath},\xCE_{\indPath+1}]$ is contained in
$\cellCEstgZ^\tau\cup \cellCEstgZ^\ki\cup \cellCEstgZ^\taup$,
and,  
up to exchanging $\xCE_{\indPath}$ and $\xCE_{\indPath+1}$,
denoting $\vstCEs=\vstCEs(\tau)$ and $\vstCEps=\vstCEs(\taup)$,
we are in one of the following cases
\begin{enumerate}
\item 
\label{i- case - contained in one triangle}
$\bEdgeCEki=\vstCEi$, and
$\xCE_{\indPath}=\bEdgeCEij$ and
$\xCE_{\indPath+1}=\bEdgeCEki$ ; 
\item 
\label{i- case - contained in triangle U edge strip}
$\xCE_{\indPath}=\bEdgeCEij$ and
$\xCE_{\indPath+1}=\bEdgeCEik$ ;
\item 
\label{i- case - contained in shrinked edge strip}
$\bEdgeCEki=\vstCEi$ and $\bEdgeCEik=\vstCEpi$, 
and
 $\xCE_{\indPath}=\bEdgeCEki$ and $\xCE_{\indPath+1}=\bEdgeCEik$ ;

\item 
\label{i- case - contained in triangle U shrinked edge strip U adj triangle}
$\bEdgeCEki=\vstCEi$ and $\bEdgeCEik=\vstCEpi$, 
and $\xCE_{\indPath}=\bEdgeCEij$ and
$\xCE_{\indPath+1}=\bEdgeCEkl$.

\end{enumerate}
\end{lemm}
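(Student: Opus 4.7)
The plan is to use hypothesis (\HypEdgeSep) to trace the singular segment $[\xCE_\indPath,\xCE_{\indPath+1}]$ through the cells of $\CEstgZ$ adjacent to the singularity $\xCE_\indPath$. Since $\xCE_\indPath$ is a singularity of $\CEstgZ$, the observation already made in the excerpt gives that $\xCE_\indPath=\bCE_\e$ for some oriented edge $\e=(\k,\i)$ of $\Tt$; let $\tau=(\i,\j,\k)$ and $\taup=(\k,\l,\i)$ be the two triangles of $\Tt$ adjacent to $\e$. Locally around $\xCE_\indPath$, the complex $\CEstgZ$ is the union $\cellCEstgZ^\tau\cup\cellCEstgZ^\e\cup\cellCEstgZ^\taup$, where each triangle cell is a singular equilateral triangle of $\Aa$ and the edge cell is either a segment or a parallelogram in $\Aa$ according to the signs of $\gEp_\e,\gEp_\eb$ (as given in Remark \ref{rema- leftshiftingedge e : three cases}).

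First, I would classify the possible locations of $\xCE_\indPath$ on the boundary of $\cellCEstgZ^\e$: by construction these are exactly the points $\bEdgeCEij, \bEdgeCEki\in\cellCEstgZ^\tau$ or $\bEdgeCEkl, \bEdgeCEik\in\cellCEstgZ^\taup$. Up to swapping $\xCE_\indPath$ with $\xCE_{\indPath+1}$ (and possibly exchanging the roles of $\tau$ and $\taup$), I may assume $\xCE_\indPath=\bEdgeCEij$. Second, using the explicit shape of the singular equilateral triangle $\cellCEstgZ^\tau$ described in Section \ref{ss- construction of the A2-complex from gZ}, I would check that among the germs of singular segments in $\cellCEstgZ^\tau$ issuing from $\bEdgeCEij$ there is essentially one direction along which a segment of prescribed singular type can propagate; this direction points to the opposite side $[\bEdgeCEki,\vstCEi]$ of $\cellCEstgZ^\tau$ (or to the vertex $\vstCEi$ if $\bEdgeCEki=\vstCEi$).

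Third, the crucial step: I would apply hypothesis (\HypEdgeSep) to prove that this singular ray inside $\cellCEstgZ^\tau$ exits the triangle not through an edge-strip different from $\cellCEstgZ^\e$ but only at a point of the side $[\bEdgeCEki,\vstCEi]$ adjacent to $\cellCEstgZ^\e$, as indicated in Remark \ref{rema- hyp (Edgeseparating)} and Figure \ref{fig- edgeseparating}. Concretely, the two inequalities $-\gEp_\ei-\gEp_\eii<\gZmoins_\tau$ and $-\gEp_\eib-\gEp_\eiib<\gZplus_\tau$ are precisely the linear conditions that forbid a singular segment entering $\cellCEstgZ^\tau$ from one edge-cell to cross over into another. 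Hence, from $\bEdgeCEij$, the segment $[\xCE_\indPath,\xCE_{\indPath+1}]$ can only exit $\cellCEstgZ^\tau$ at $\bEdgeCEki$ (or, if $\bEdgeCEki=\vstCEi$, at $\vstCEi$), entering then the edge-cell $\cellCEstgZ^\e$.

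Fourth, I would continue the same analysis across $\cellCEstgZ^\e$ and, if necessary, into $\cellCEstgZ^\taup$, again using (\HypEdgeSep). The degeneracy dichotomies $\gZ_\tau=0$ (triangle shrunk to a point) and $\gEp_\e\geq 0,\gEp_\eb\geq 0$ (edge-strip shrunk to a segment) together with the \leftshifting{} condition then naturally produce exactly the four listed cases: case (\ref{i- case - contained in one triangle}) when the segment stops inside $\cellCEstgZ^\tau$ (requiring $\bEdgeCEki=\vstCEi$); case (\ref{i- case - contained in triangle U edge strip}) when it crosses the edge-strip transversally; case (\ref{i- case - contained in shrinked edge strip}) when it lies in a degenerate edge-strip (both endpoints degenerate); and case (\ref{i- case - contained in triangle U shrinked edge strip U adj triangle}) when a degenerate edge-strip lets it cross from $\cellCEstgZ^\tau$ into $\cellCEstgZ^\taup$. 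The main obstacle will be the exhaustive case analysis: making sure no hidden configuration is missed when some of $\gZ_\tau,\gZ_\taup,\gEp_\e,\gEp_\eb$ vanish, and verifying in each subcase that (\HypEdgeSep) indeed prevents the segment from propagating further than claimed.
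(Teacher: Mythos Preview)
Your overall strategy matches the paper's: use the angle condition to force the singular segment onto the boundary of $\CEstgZ$, then track it through adjacent cells and invoke (\HypEdgeSep) to bound how far it can go. But two aspects of your setup would derail the execution.

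First, a labeling confusion. The edge $\e$ for which $\xCE_\indPath=\bCE_\e$ is \emph{not} the edge $(\k,\i)$ appearing in the lemma's conclusion. In the paper's argument, $\xCE_\indPath=\bEdgeCEij$ sits on the edge $(\i,\j)$; the edge $(\k,\i)$ is the one the segment subsequently crosses. So your local model ``around $\xCE_\indPath$ the complex is $\cellCEstgZ^\tau\cup\cellCEstgZ^\e\cup\cellCEstgZ^\taup$ with $\e=(\k,\i)$'' is wrong: the neighborhood of $\xCE_\indPath$ involves the edge cell $\cellCEstgZ^{\ij}$ and the triangle $\taupp=(\j,\i,\h)$ on the other side of $(\i,\j)$, not $\taup$. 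Consequently the points $\bEdgeCEij,\bEdgeCEkl$ you list as corners of $\cellCEstgZ^\e$ are not on that cell at all.

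Second, your step~2 claim that from $\bEdgeCEij$ ``there is essentially one direction'' is where the real case analysis hides. The key input is that $\sphericalangle_{\xCE_\indPath}(\xCE_{\indPath-1},\xCE_{\indPath+1})>\pi$ forces $\TangS_{\xCE_\indPath}\xCE_{\indPath+1}$ to lie on the boundary of $\CEstgZ$; but at $\bEdgeCEij$ there are up to \emph{three} boundary directions: toward $\vstCEi$ inside $\cellCEstgZ^\tau$, toward $\vstCEppi=\vstCEi(\taupp)$ across the edge cell $\cellCEstgZ^{\ij}$, and (when $\bEdgeCEij=\vstCEj$) along the third side of $\cellCEstgZ^\tau$. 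The paper treats all three separately, and the latter two are precisely what, after exchanging $\xCE_\indPath,\xCE_{\indPath+1}$ and relabeling, yield case~(\ref{i- case - contained in shrinked edge strip}) --- which your WLOG reduction to $\xCE_\indPath=\bEdgeCEij$ does not cover. Finally, (\HypEdgeSep) is invoked not inside $\cellCEstgZ^\tau$ as you describe, but in the \emph{next} triangle $\cellCEstgZ^\taup$ once the segment has already entered it via $\cellCEstgZ^{\ki}$, to prevent it from continuing on into $\cellCEstgZ^{\li}$; this is what pins down $\xCE_{\indPath+1}=\bEdgeCEkl$ in case~(\ref{i- case - contained in triangle U shrinked edge strip U adj triangle}).
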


\begin{figure}[h]
\hfill
\begin{minipage}[b]{.4\linewidth}
\centering  
\includegraphics[scale=0.5]{figures/CEs-geod-sing-segment-tri.fig}\\
(\ref{i- case - contained in one triangle})
\end{minipage}
\hfill
\begin{minipage}[b]{.4\linewidth}
\centering  
  \includegraphics[scale=0.5]{figures/CEs-geod-sing-segment-triUedge.fig}\\
\vskip-4ex
(\ref{i- case - contained in triangle U edge strip})
\end{minipage}
\hfill\hfill

\hfill
\begin{minipage}[b]{.4\linewidth}
\centering  
  \includegraphics[scale=0.5]{figures/CEs-geod-sing-segment-incl-shrinked-edge.fig}\\
(\ref{i- case - contained in shrinked edge strip})
\end{minipage}
\hfill
\begin{minipage}[b]{.4\linewidth}
\centering  
  \includegraphics[scale=0.5]{figures/CEs-geod-sing-segment-triUshrinked-edgeUtri.fig}\\
\vskip-2ex
(\ref{i- case - contained in triangle U shrinked edge strip U adj triangle})
\end{minipage}
\hfill\hfill

  \caption{Singular segment in a geodesic in $\CEstgZ$: the four cases.}
  \label{fig- the four cases for a singular segment in a geod in CEstgZ}
\end{figure}

\begin{proof}
  Since $\sphericalangle_{\xCE_{\indPath}}({\xCE_{\indPath-1}}, {\xCE_{\indPath+1}})>\pi$,
the singular direction $\TangS_{\xCE_{\indPath}}\xCE_{\indPath+1}$ must be 
in the  boundary of $\CEstgZ$ at $\xCE_{\indPath}$. 
Similarly
the singular direction $\TangS_{\xCE_{\indPath+1}}\xCE_{\indPath}$ must be 
in the  boundary of $\CEstgZ$ at $\xCE_{\indPath+1}$.

Denote by $(\i,\j)$ the oriented edge such that $\xCE_\indPath=\bEdgeCEij$, 
and let $\tau=(\i,\j,\k)$ be the (marked) left adjacent triangle in $\Tt$.%

Suppose first that the segment from $\xCE_{\indPath}=\bEdgeCEij$ to 
$\xCE_{\indPath+1}$ starts in direction of the point $\vstCEi$.
Then it contains $[\bEdgeCEij,\vstCEi]$.
If  $\xCE_{\indPath+1}=\vstCEi$, then $\vstCEi=\bEdgeCEki$, and we are done.
If $\xCE_{\indPath+1}\neq \vstCEi$,
the segment $[\bEdgeCEij,\vstCEi]$ extends by $[\vstCEi,\bEdgeCEik]$ in a constant
type segment, and $[\xCE_{\indPath},\xCE_{\indPath+1}]$ contains the segment 
$[\bEdgeCEij,\bEdgeCEik]$.
If $\xCE_{\indPath+1}\neq \bEdgeCEik$, then we now show that,
by hypothesis (\HypEdgeSep),
we must have $\bEdgeCEik=\vstCEpk$ and $\xCE_{\indPath+1}=\bEdgeCEkl$:
The constant type geodesic ray $\ray$ in $\cellCEstgZ^\taup$ from $\bEdgeCEik$
parallel to the side $[\vstCEpk,\vstCEpl]$   hits the boundary of
$\cellCEstgZ^\taup$ at the point $\bCE$ of $[\vstCEpi,\vstCEpl]$ 
at distance $d(\bEdgeCEik,\vstCEpk)= \larg_\ik=\max(-\gEp_\ki,-\gEp_\ik)$ 
from  $\vstCEpl$, 
and cannot be extended outside $\cellCEstgZ^\taup$ 
since $\bCE$ is on $[\vstCEpl,\bEdgeCEli[$ 
since $\larg_\ik+\larg_\li <\abs{\gZ_\taup}$
by (\HypEdgeSep) (see remark \ref{rema- hyp (Edgeseparating)}).
There is only one singular point that may then be on $\ray$, which is
$\bEdgeCEkl$ in the case where $\bEdgeCEik=\vstCEpk$.

Suppose now that 
$\TangS_{\xCE_{\indPath}}\xCE_{\indPath+1} = \TangS_\bEdgeCEij \vstCEppi$, where
$\vstCEppi=\vstCEi(\taupp)$, 
with $\taupp=(\j, \i, \h)$ the triangle adjacent to $\tau$ 
along edge $(\i, \j)$.
If have $\xCE_{\indPath+1}=\vstCEppi=\bEdgeCEji$,
 then  $\xCE_\indPath=\bEdgeCEij=\vstCEj$, and we are in case (\ref{i- case -
   contained in shrinked edge strip}), 
up to replacing the pair of adjacent triangles
$\tau,\taupp$ by the pair $\taup,\tau$.

If $\xCE_{\indPath+1}\neq\vstCEppi$, then $\xCE_{\indPath+1}$ must be the next
singular point on the same side of the adjacent triangle cell
$\cellCEstgZ^\taupp$ 
(since the ray from $\vstCEppi=\vstCEi(\taupp)$ to $\bEdgeCEih$ to no extend
outside $\cellCEstgZ^\taupp$). 
We are then reduced to  the
previous case $\xCE_{\indPath}=\bEdgeCEij$, $\xCE_{\indPath+1}=\bEdgeCEik$ 
by exchanging the roles of $\xCE_\indPath$ and $\xCE_{\indPath+1}$.

If $\TangS_{\xCE_{\indPath}}\xCE_{\indPath+1}$ is neither $\TangS_\bEdgeCEij \vstCEi$
nor $\TangS_\bEdgeCEij \vstCEppi$, then there is a third boundary direction in
$\CEstgZ$ at $\bEdgeCEij$, which means that $\bEdgeCEij=\vstCEj$ and
$\TangS_{\xCE_{\indPath}}\xCE_{\indPath+1}=\TangS_{\vstCEj} \vstCEk$.
Then as $[\vstCEj, \vstCEk]$ is not extendable, we must have
$\xCE_{\indPath+1}=\bEdgeCEjk$.
We are then reduced to  the
previous case $\xCE_{\indPath+1}=\bEdgeCEij$, $\xCE_{\indPath+1}=\bEdgeCEki$ 
by exchanging the roles of $\xCE_\indPath$ and $\xCE_{\indPath+1}$.
\end{proof}
Since $\Psi$ is a local $\Cc$-isometry 
(Section \ref{s- Psi loc C isom}), 
the path $\eta$ is a local $\Cc$-geodesic in $\Es$, 
and its restriction to $[\tRR_\indPath,\tRR_{\indPath+1}]$ is
is the affine  segment $[\yE_\indPath,\yE_{\indPath+1}]$ (since it is of
constant type of direction in $\Cb$).

\noindent{\bfseries 
Case (\ref{i- case - contained in one triangle}): 
$\xCE_{\indPath}=\bEdgeCEij$ and $\xCE_{\indPath+1}=\bEdgeCEki$.}
Then $\bEdgeCEki=\vstCEi$.
We then have $\yE_{\indPath+1}=\Psi(\vstCEi)=\vstEi$,
and  $\TangS_{\yE_{\indPath+1}}\yE_{\indPath+2}$
is in $\TangS_\bEdgeEki \Psi(\cellCEstgZ^\ki)$, 
hence in the closed chamber $\TangS_\bEdgeEki \F_\i$.
Since $\bEdgeEki=\vstEi$ is in the closed Weyl chamber from $\bEdgeEij$ to $\F_\i$ 
by Theorem \ref{theo- triangle sing type triangle - local},
we have that 
$\TangS_{\yE_{\indPath+1}}\yE_{\indPath+2}$ is in $\TangS_\bEdgeEij\F_\i$.
Since $\bEdgeEij$ is in the flat $\App_\ij$,
it proves that
$\TangS_{\yE_{\indPath}}\yE_{\indPath+2}$ is $\Cc$-opposite 
to $\TangS_{\yE_{\indPath}}\yE_{\indPath-1}$.

\noindent{\bfseries 
Case (\ref{i- case - contained in triangle U edge strip}):
$\xCE_{\indPath}=\bEdgeCEij$ and $\xCE_{\indPath+1}=\bEdgeCEik$.}
At $\xCE_{\indPath+1}=\bEdgeCEik$, 
the direction $\TangS_{\xCE_{\indPath+1}}\xCE_{\indPath+2}$ 
is in the unique closed chamber of $\TangS_{\bEdgeCEik}\CEst$
containing $\TangS_{\bEdgeCEik}\vstCEpi$, 
where $\taup=(\k,\l,\i)$ is the adjacent triangle in $\Tt$.
In the building $\Es$, we then have 
 $\yE_{\indPath}=\bEdgeEij$, 
$\yE_{\indPath+1}=\bEdgeEik$, 
We now prove that  we then have 
 $\TangS_{\yE_\indPath}\yE_{\indPath+2}\in\TangS_\bEdgeEij\F_\i$.
Let $\Ch$ be a closed Weyl chamber 
with tip $\yE_\indPath=\bEdgeEij$ 
containing a germ at $\yE_{\indPath+1}=\bEdgeEik$ 
of the segment $[\yE_{\indPath+1},\yE_{\indPath+2}]$.
Then $\Ch$ contains
a germ at $\bEdgeEik$ of the segment $[\bEdgeEik,\vstEpi]$. 
Let $\Ch_\i=\Ch(\bEdgeEij,\F_\i)$ 
be  the closed Weyl chamber from $\bEdgeEij$ to
$\F_\i$.
The closed Weyl chambers $\Ch(\bEdgeEik,\F_\i)$,
 and $\Ch(\bEdgeEik,\F_\k)$ are opposite at $\bEdgeEik$ 
(because $\bEdgeEik \in \Aik$),
and respectively contain  $\vstEpi$ and $\bEdgeEij$, 
therefore $\Ch_i$ contains the segment $[\bEdgeEik,\vstEpi]$.
Since $[\bEdgeEij,\bEdgeEik]$ and $[\bEdgeEik,\vstEpi]$ 
are singular segments of different type of direction in $\bord\Cb$, 
we have then
$\TangS_\bEdgeEij\Ch=\TangS_\bEdgeEij\Ch_i$, 
hence $\TangS_{\yE_\indPath}\yE_{\indPath+2}\in\TangS_\bEdgeEij\F_\i$.
Since $\TangS_\bEdgeEij \F_\j$ and $\TangS_\bEdgeEij \F_\i$ 
are opposite closed chambers of $\TangS_\bEdgeEij \Es$ 
(because $\bEdgeEij \in \Aij$), 
it proves that
$\TangS_{\yE_{\indPath}}\yE_{\indPath+2}$ is $\Cc$-opposite to $\TangS_{\yE_{\indPath}}\yE_{\indPath-1}$.

\noindent{\bfseries 
Case (\ref{i- case - contained in shrinked edge strip}):
$\xCE_{\indPath}=\bEdgeCEki=\vstCEi$ and $\xCE_{\indPath+1}=\bEdgeCEik=\vstCEpk$.}
In the building $\Es$ we have 
$\yE_\indPath=\vstEi$,
$\yE_{\indPath+1}=\vstEpk$,
$\TangS_{\yE_{\indPath}}\yE_{\indPath-1}\in \Delta_\tau$,
and $\TangS_{\yE_{\indPath+1}}\yE_{\indPath+2}\in \Delta_\taup$.
Lemma \ref{lemm- two adjacent sing triangles lie in a flat}
then implies that
there then exists two opposite Weyl chamber with tip $\vstEi$ containing
respectively $\Delta_\tau$ and $[\vstEi,\vstEpk]\cup \Delta_\taup$, 
so 
$\TangS_{\yE_{\indPath}}\yE_{\indPath+2}$ is $\Cc$-opposite to
$\TangS_{\yE_{\indPath}}\yE_{\indPath-1}$ at $\yE_\indPath=\vstEi$.
\noindent{\bfseries 
Case 
(\ref{i- case - contained in triangle U shrinked edge strip U adj triangle}): 
$\xCE_{\indPath}=\bEdgeCEij$ and $\xCE_{\indPath+1}=\bEdgeCEkl$.}
Then we have $\bEdgeCEik=\vstCEpk$, i.e. $\gEp_\e= 0$.
In the building $\Es$, we then have 
$\yE_{\indPath}=\bEdgeEij$, 
$\yE_{\indPath+1}=\bEdgeEkl$, 
and
in the spherical building $\TangS_{\bEdgeEij}\Es$ of directions at
$\yE_{\indPath}=\bEdgeEij$,
we have that
$\TangS_{\yE_{\indPath}}\yE_{\indPath+1}$ 
belongs to the chamber $\TangS_\bEdgeEij \F_\j$, 
and
$\TangS_{\yE_{\indPath+1}}\yE_{\indPath+2}$ 
belongs to the chamber $\TangS_\bEdgeEkl \F_\l$.
Since $\gEprime_\e\leq 0$,
the following Lemma \ref{lemm- gEprime leq 0 implies that Fj Fl opp en x_i}
implies  
that the ideal chambers $\F_\j$ and $\F_\l$ are then opposite at $\vstEi$, 
so, since $\vstEi$ is on the singular segment $]\bEdgeEij,\bEdgeEkl[$, 
it implies that
$\TangS_{\yE_{\indPath}}\yE_{\indPath+2}$ is $\Cc$-opposite to
$\TangS_{\yE_{\indPath}}\yE_{\indPath-1}$
as needed.

\begin{lemm}
\label{lemm- gEprime leq 0 implies that Fj Fl opp en x_i}
Let $\tau=(\i,\j,\k)$ and $\taup=(\k,\l,\i)$, 
be a pair of adjacent triangles in $\Tt$
(where $(\i,\j,\k,\l)$ are positively ordered), 
and denote by $\e$ the common edge $(\k,\i)$. 
Denote $\vstEi=\vstEi(\tau)$, $\vstEk=\vstEk(\tau)$, 
$\vstEpi=\vstEi(\taup)$ and $\vstEpk=\vstEk(\taup)$.
Suppose that 
$\gFGp$ is  \leftshiftingedge{} $\e$
and $\gEp_\e=0$. 
Then
\begin{enumerate}
\item 
\label{i- singular geod D_j x_i _p_l}
There is a geodesic from $\D_\j$ to $\p_\l$ through $\vstEj$, $\vstEi$, $\vstEpk$, and $\vstEpl$ ;

\item $\F_\j$ and $\F_\l$ are opposite 
at $\vstEi$
if and only if
\begin{equation*}
\gEprime_\e
=\geombir(\D_\i, \p_\i \p_\kl, \p_\i \p_\j, \p_\i \p_\k)
\leq 0
\end{equation*}

\end{enumerate}
\end{lemm}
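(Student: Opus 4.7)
The strategy is to place the two singular triangles $\Delta_\tau, \Delta_\taup$ in a common flat and use the degeneration $\gEp_\e = 0$ to align the four vertices $\vstEj, \vstEi, \vstEpk, \vstEpl$ on a single singular line.

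For part (i), I apply Lemma \ref{lemm- two adjacent sing triangles lie in a flat} to obtain a marked flat $\f^* : \Aa \to \Es$ with $\f^*(0) = \vstEi$, $\f^*(\aA) = \vstEpk$ where $\aA = (\gEp_\eb, 0)$ in simple roots coordinates, and with Weyl chambers $\f^*(-\Cb) \supset \Delta_\tau$, $\f^*(\aA + \Cb) \supset \Delta_\taup$. By Remark \ref{rema- leftshiftingedge e : three cases}, the hypotheses $\gEp_\e = 0$ and \leftshiftingedge{} $\e$ place us in case (2), forcing $\gZ_\tau < 0$, $\gZ_\taup > 0$, $\gEp_\eb > 0$. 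Applying cyclic variants of Theorem \ref{theo- triangle sing type triangle - local} to $\tau$ and $\taup$ and transferring the vectors $\ovra{\vstEi \vstEj}$ and $\ovra{\vstEpk \vstEpl}$ into the $\f^*$-marking (the conversions amount to a $\wopp$ change of chart between the $\bord\Cc \to \F_\i$ marking of $\f^*$ and the $\bord\Cc \to \F_\j$ or $\bord\Cc \to \F_\l$ markings of $\Aij$, $\Akl$), one finds $\vstEj = (\gZ_\tau, 0)$ and $\vstEpl = (\gEp_\eb + \gZ_\taup, 0)$. Thus all four points lie on the singular line $\{\rac{2} = 0\}$ of $F^* := \f^*(\Aa)$, in the order $\vstEj, \vstEi, \vstEpk, \vstEpl$. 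Extending this line to infinity past $\vstEj$ in the $-\rac{1}$ direction, the extension must agree with the extension of $[\vstEi, \vstEj] \subset \Aij$ in $\Aij$, whose ideal endpoint is $\D_\j$; symmetrically past $\vstEpl$ in the $+\rac{1}$ direction, working in $\Akl$, the endpoint is $\p_\l$. The resulting bi-infinite line in $F^*$ has direction $(+1, 0) \in \Cb$ throughout, so it is a $\Cc$-geodesic from $\D_\j$ to $\p_\l$ passing through the four prescribed vertices.

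For part (ii), the construction in (i) already places $\TangS_{\vstEi}\D_\j$ and $\TangS_{\vstEi}\p_\l$ in opposite Weyl chambers of the apartment $\TangS_{\vstEi} F^*$. Opposition of the full flags $\F_\j$ and $\F_\l$ at $\vstEi$ is equivalent to $\vstEi \in \App(\F_\j, \F_\l)$, equivalently to the further condition that $\TangS_{\vstEi}\p_\j$ and $\TangS_{\vstEi}\D_\l$ complete these half-chambers into fully opposite chambers at $\vstEi$. Using the projection $\projE_{\p_\i} : \Es \to \Es_{\p_\i}$ to the transverse tree at $\p_\i$ and the interpretation of the geometric cross ratio as an oriented Busemann distance between tripod centers (formula~\eqref{eq- geombir and centers of tripods}), as in the proof of Proposition \ref{prop- geometry of  adjacent triangles of type triangle}, the incidence $\vstEi \in \App(\F_\j, \F_\l)$ translates into a relative positioning of the four boundary points $\D_\i, \p_\i \p_\j, \p_\i \p_\k, \p_\i \p_\kl$ of $\Es_{\p_\i}$ that is precisely captured by the sign of $\gEprime_\e = \Bus{\p_\i}{\oT}{\oTp}$, where $\oT$ and $\oTp$ are the projections of $\p_\i\p_\kl$ and $\p_\i\p_\k$ on the line $[\p_\i\p_\j, \D_\i]$ in $\Es_{\p_\i}$. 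The hardest step is this last translation, which requires a parallel analysis in the dual transverse tree $\Es_{\D_\i}$ to simultaneously control both the point and line vertices of the flags, together with careful use of the classification of the triple $(\gEp_\e, \gEprime_\e, \gEpprime_\e)$ (from the cyclic symmetries in Proposition \ref{prop- geombir}) under the degeneracy $\gEp_\e = 0$, where it reduces the iff to the single criterion $\gEprime_\e \leq 0$.
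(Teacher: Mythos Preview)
Your argument for part~(i) is essentially the paper's, carried out in explicit coordinates rather than invoking opposition at the endpoints conceptually; both are fine and amount to the same thing.

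Part~(ii) has a genuine gap. The condition you need is that $\TangS_{\vstEi}\p_\j$ and $\TangS_{\vstEi}\D_\l$ are opposite in $\TangS_{\vstEi}\Es$, but neither $\p_\j$ nor $\D_\l$ projects naturally into $\bordinf\Es_{\p_\i}$ (lines through $\p_\i$) or $\bordinf\Es_{\D_\i}$ (points on $\D_\i$), so your proposed tree projection does not see them directly. The paper's proof supplies the missing reduction: from part~(i) one has $\TangS_{\vstEi}\p_\l=\TangS_{\vstEi}\p_\i$, and working in the \emph{local projective plane} $\PP_{\vstEi}$ this lets one rewrite $\TangS_{\vstEi}\D_\l$ as $\TangS_{\vstEi}\p_\i\oplus\TangS_{\vstEi}\p_{\kl}=\TangS_{\vstEi}(\p_\i\p_{\kl})$ and $\TangS_{\vstEi}\p_\j\oplus\TangS_{\vstEi}\p_\l$ as $\TangS_{\vstEi}(\p_\i\p_\j)$. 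The opposition condition becomes $\TangS_{\vstEi}(\p_\i\p_\j)\neq\TangS_{\vstEi}(\p_\i\p_{\kl})$, which \emph{is} a statement about two boundary points of $\Es_{\p_\i}$ and can be read off at the tripod center $\projE_{\p_\i}(\vstEi)$ as the sign of $\gEprime_\e$. No dual tree is needed. Your sketch skips this local-projective-plane step entirely, and the appeal to $\Es_{\D_\i}$ does not substitute for it.

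Two smaller points: your formula $\gEprime_\e=\Bus{\p_\i}{\oT}{\oTp}$ should read $\Bus{\D_\i}{\oT}{\oTp}$, since $\D_\i$ (not $\p_\i$) is the ideal point of the tree $\Es_{\p_\i}$ appearing as $\xi_1$ in \eqref{eq- geombir and centers of tripods}; and the formulation ``$\vstEi\in\App(\F_\j,\F_\l)$'' presupposes that $\F_\j,\F_\l$ are opposite as ideal chambers, which is part of what you are trying to prove.
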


\begin{figure}[h]
  \includegraphics[scale=0.5]{figures/%
Es-adjacent-triples-shrinked-edge-type1.fig}
  \caption{Adjacent triples with $\gEp_\e=0$, $\gEp_\eb>0$, $\gEprime_\e<0$.}
  \label{fig- adjacent triple with singular edge}
\end{figure}

\begin{proof}%
[Proof of Lemma \ref{lemm- gEprime leq 0 implies that Fj Fl opp en x_i}]
Since $\gFGp$ is  \leftshiftingedge{} $\e$ and $\gEp_\e=0$, we must have
$\gZ_\tau\leq 0$, and   $\gEp_\eb>0$
Lemma \ref{lemm- two adjacent sing triangles lie in a flat} then
implies that 
the path $(\vstEj, \vstEi, \vstEpk, \vstEpl)$ is a geodesic segment of 
singular type $\stypep$.
It extends in a geodesic $\geod$ from $\D_\j$ to $\p_\l$, 
because $\Delta_\tau$ is opposite to $\F_\j$ at $\vstEj$ 
and  $\Delta_\taup$ is opposite to $\F_\l$ at $\vstEpl$ 
(Theorem \ref{theo- triangle sing type triangle - local}),
and (\ref{i- singular geod D_j x_i _p_l}) is proven.

 By point (\ref{i- singular geod D_j x_i _p_l}) 
and Proposition \ref{prop- geometry of adjacent triangles of type triangle},
the directions $\D_\j$ and $\p_\l$ are opposite at $\xE=\vstEi$, 
and we have $\TangS_\xE\p_\l=\TangS_\xE\vstEpk=\TangS_\xE\p_\i$. 
Thus
$\F_\j$ and $\F_\l$ are opposite at $\xE$.
if and only if
$\p_\j$ and $\D_\l$ are opposite at $\xE$, 
i.e. $\TangS_\xE\p_\j \notin \TangS_\xE\D_\l$.

We now prove that
$\TangS_\xE\p_\j \in  \TangS_\xE\D_\l$ if and only if
$\TangS_\xE (\p_\i \p_\j)=\TangS_\xE (\p_\i \p_\kl)$.
First observe that
 $\TangS_\xE \p_\i$ is different from $\TangS_\xE \p_\j$,
 as $\xE$ is
on the flat $\App(\p_\i,\p_\j,\p_\k)$. 
We also have $\TangS_\xE \p_\i\neq \TangS_\xE \p_\kl$, since 
$\TangS_\xE \p_\kl \in \TangS_\xE \D_\k$ and 
$\TangS_\xE \p_\i \notin \TangS_\xE \D_\k$ 
since $\xE$ is in the flat  $\App(\F_\k,\F_\i)$.
Then
 $\TangS_\xE\p_\j \in  \TangS_\xE\D_\l$
if and only if
$\TangS_\xE \p_\j \oplus \TangS_\xE \p_\l = \TangS_\xE \D_\l$
(since $\TangS_\xE \p_\j \neq \TangS_\xE \p_\l$).
We have
$\TangS_\xE \p_\j \oplus \TangS_\xE \p_\l
=\TangS_\xE \p_\j \oplus \TangS_\xE \p_\i
=\TangS_\xE (\p_\i \p_\j)$
(since $\TangS_\xE \p_\j \neq \TangS_\xE \p_\i$).
On the other hand, since $\TangS_\xE \p_\i\neq \TangS_\xE \p_\kl$,
we have $\TangS_\xE \D_\l
=\TangS_\xE \p_\i \oplus \TangS_\xE \p_\kl
=\TangS_\xE (\p_\i \p_\kl)$, and we are done.

Projecting in the transverse tree at infinity $\Es_{\p_\i}$
we now show 
that $\TangS_\xE (\p_\i \p_\j)\neq\TangS_\xE (\p_\i \p_\kl)$
is equivalent to
$\geombir(\D_\i, \p_\i \p_\kl, \p_\i \p_\j, \p_\i\p_\k)
=\gEprime_\e\leq 0$.
\begin{figure}[h]
  \includegraphics[scale=0.7]{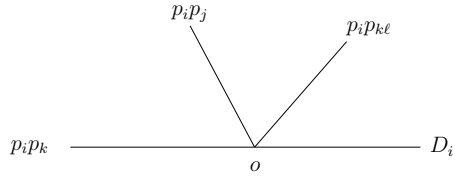} 
  \caption{In the transverse tree at infinity $\Es_{\p_\i}$.}
\end{figure}
Indeed,
since the projection of $\xE$ is
 the center $\oT$ of the ideal tripod
$\D_\i$, $\p_\i \p_\j$, $\p_\i \p_\k$ 
(by Lemma 17 of \cite{ParTriples}),
the directions $\TangS_\xE (\p_\i \p_\j)$ and  $\TangS_\xE (\p_\i \p_\kl)$ 
are distinct 
if and only if 
the two geodesic rays in $\Es_{\p_\i}$ 
from $\oT$ to the ideal points $\p_\i \p_\j$ and
$\p_\i\p_\kl$ have distinct germs at $\oT$. 
This is equivalent to 
$\geombir(\D_\i, \p_\i \p_\kl, \p_\i \p_\j, \p_\i\p_\k)
=\gEprime_\e\leq 0$,
which concludes.
\end{proof}
We have proven that in all cases 
 $\TangS_{\yE_{\indPath}}\yE_{\indPath+2}$ is $\Cc$-opposite to $\TangS_{\yE_{\indPath}}\yE_{\indPath-1}$.
Therefore the piecewise affine path
$(\yE_0, \yE_1, \ldots, \yE_\IndPath)$ is a global $\Cc$-geodesic in $\Es$ by
Proposition \ref{prop- critere Cgeod avec sing}, which concludes the
proof of Theorem \ref{theo- global Cgeod surface}.

\section{Degenerations of representations
and convex 
\texorpdfstring{$\RP^2$}{RP\^2}-structures}
\label{s- degenerations of representations}
In this section, 
we use Theorem \ref{theo- Cgeod surface} to describe 
a large family of degenerations of convex $\RP^2$-structures on $\Sf$,   
corresponding to 
a part of the boundary of the moduli space of 
convex $\RP^2$-structures on $\Sf$ 
constructed in \cite{ParComp}.
Let $\KK$ be any valued field. 
Starting from \S \ref{ss- thm on degenerations of rep}, the field $\KK$ will be
supposed to be either equal to $\RR$ or $\CC$ or ultrametric.

\subsection{Background on asymptotic cones}
\label{ss- asympt cones}
In this section, we gather definitions and tools about the various
notions of ultralimits and asymptotic cones that will be used in 
what follows.
We first fix notations  about usual ultralimits of metric spaces 
(see for example \cite{KlLe97},
or \cite[\S  2.3]{ParComp} for more details).
Then we briefly recall various notions of asymptotic cones of
algebraic objects  introduced in  \cite[\S 3]{ParComp}:
asymptotic cones of valued fields, 
normed vector spaces, 
linear group, 
ultralimits of representations, and their links. 
Finally we introduce the notion of 
{\em asymptotic cones of projective spaces} 
and establish some basic properties of  
asymptotic cones in projective geometry.

Fix a (non principal) ultrafilter $\om$ on $\NN$,
and  a {\em scaling sequence} $(\lan)_{\n\in\NN}$, 
that is a sequence  of real numbers  such that $\lan\geq 1$ and 
$\lan \to \infty$.

A point $\xom$ in a Hausdorff topological space $\E$ 
is the {\em $\om$-limit} of a sequence $(\xn)_\n$ in $\E$
if it is its limit  with respect to the filter $\om$. 
We will then  denote $\limom \xn=\xom$. 
Note that  $\limom \xn$ is then a cluster value of the sequence $(\xn)_\n$. 
Recall that any sequence contained in a compact (Hausdorff) space has a
(unique) $\om$-limit. 
The $\om$-limits of sequences of real numbers are taken
in the compact space $[\minfty,\pinfty]$.

Given a sequence of pointed metric spaces
$(\Esn,\dEn,\on)_{\n\in\NN}$,
a sequence $(\xn)_\n$ in $\prod_\n \Esn$ is called 
{\em \ombounded{}} when $\limom \dEn(\on,\xn) <\infty$.

The {\em ultralimit} of 
$(\Esn,\dEn,\on)_{\n\in\NN}$
is the quotient $\Esom$ of 
the subspace of \ombounded{} sequences in $\prod_\n \Esn$
by the pseudo-distance $\dEom$ given by
$$\dEom((\xn),(\yn))=\limom \dEom(\xn,\yn) \;.$$
It is a complete metric space $(\Esom,\dEom)$.
The class in $\Esom$ of a \ombounded{} sequence
$(\xn)$ will be called its {\em ultralimit} and 
be denoted by $\omulim \xn$.
Given a sequence $\Yn \subset\Esn$, we denote by
$\omulim{\Yn}$ the subset of $\Esom$ 
consisting of ultralimits of  
\ombounded{} sequences $(\xn)_\n$ 
such that $\xn \in \Yn$ for all $\n$, 
and call it the {\em ultralimit} of the sequence $(\Yn)_\n$.

Let $(\KKom,\absom{\ })$
be the asymptotic cone of the valued field $\KK$ 
with respect to the scaling sequence $(\lan)$,
that is the ultralimit of the sequence of valued fields 
$(\KK, \abs{\ }^{\lslanh})$ 
(base points are at $0$, see \cite[\S 3.3]{ParComp}).
It is an ultrametric field. 
Note that its absolute value $\absom{\ }$ 
takes all values in $\RR_{\geq 0}$.
Given a sequence $(\aKn)$ in $\KK$, 
we denote $\omulim{\aKn}=\infty$ when
$\limom\abs{\aKn}^\lslanh=\infty$, 
so that every sequence in $\KK$ has a well defined 
ultralimit in $\KKom\cup\{\infty\}$.
Denote by $\normVo$ the canonical norm on $\V=\KK^\N$.
Let $(\Vom,\normVoom)$
be the asymptotic cone of the normed vector space $(\V,\normVo)$
with respect to the scaling sequence $(\lan)$,
i.e. the ultralimit of the sequence of normed vector spaces 
$(\V, \normVo^{\lslanh})$ (see \cite[\S 3.4]{ParComp}).
It is a normed vector space over the valued field $\KKom$, 
canonically isomorphic to $\KKom^\N$, 
with canonical basis  the ultralimit $\eVoomb=(\eVoom_\indN)$ 
of the canonical basis $\eVob=(\eVo_\indN)$ of $\KK^\N$.

Denote by $\NormEndVo$ the norm on $\End(\V)$ associated with $\normVo$.
The ultralimit $(\End(\V)_\om,\NormEndVoom)$ 
of the sequence of normed algebra $(\End(\V), \NormEndVo^{\lslanh})$ 
 is a normed algebra over the valued field $\KKom$, 
(see \cite[\S 3.5]{ParComp}).

We now describe the asymptotic cone of the linear group $\GL(\V)$.
Let $\GL(\V)_\om$ be the  subgroup of invertible elements of
$\End(\V)_\om$. 
Note that the ultralimit of a  sequence $(\un)_\n$ in $\GL(\V)$ which
is \ombounded{} in $\End(\V)$ 
(that is $\limom\NormEndVo(\un)^\lslanh <\infty$)
may be not invertible, 
so the definition  of $\GL(\V)_\om$ in \cite{ParComp}  
is incorrect (definition 3.16) 
(with no incidence on the remaining of the paper). 
The following proposition 
describes the invertible elements in $\End(\V)_\om$.

\begin{prop}
  \label{prop- GL(V)_om}
  Let $\uom=\omulim \un$ be an element of $\End(\V)_\om$.
  Then 
  $\uom$ is invertible in $\End(\V)_\om$
  if and only if
  $\un$ is in $\GL(\V)$ for $\om$-almost all $\n$
  and 
  $(\un^{-1})$ is \ombounded{} in $\End(\V)$, 
  i.e. $\limom\NormEndVo(\un^{-1})^\lslanh <\infty$.
  Then $\uom^{-1}=\omulim \un^{-1}$. 
  \qed  
\end{prop}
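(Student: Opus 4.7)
The plan is to prove both implications using submultiplicativity of the operator norm $\NormEndVo$ together with a Neumann series argument inside the finite-dimensional Banach algebra $\End(\V)$.

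For the easy direction, if $\un \in \GL(\V)$ for $\om$-almost all $\n$ and $(\un^{-1})$ is \ombounded, then $\omulim \un^{-1}$ is a well-defined element of $\End(\V)_\om$. Submultiplicativity of $\NormEndVo^{\lslanh}$ makes the product on the asymptotic cone continuous on $\om$-bounded sequences, so $\uom \cdot \omulim \un^{-1} = \omulim(\un \un^{-1}) = \omulim \id = \id$, and similarly on the other side; this identifies $\uom^{-1} = \omulim \un^{-1}$.

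For the converse, suppose $\uom$ has an inverse $\vom = \omulim \vn$ in $\End(\V)_\om$, represented by an \ombounded{} sequence $(\vn)$. The relation $\uom \vom = \id$ translates into $\limom \NormEndVo(\un \vn - \id)^{\lslanh} = 0$, which forces, for any fixed $\rho \in (0,1)$, the inequality $\NormEndVo(\un \vn - \id) < \rho^{\lan}$ for $\om$-almost all $\n$. Since $\lan \geq 1$, in particular $\NormEndVo(\un \vn - \id) < 1$ for $\om$-almost all $\n$, and the Neumann series $\sum_{k \geq 0}(\id - \un \vn)^k$ converges in $\End(\V)$ to an inverse of $\un \vn$ satisfying $\NormEndVo((\un \vn)^{-1}) \leq (1 - \NormEndVo(\un \vn - \id))^{-1}$. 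Running the same argument with $\vom \uom = \id$ shows that $\un$ is invertible for $\om$-almost all $\n$, with $\un^{-1} = \vn(\un \vn)^{-1}$.

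Finally I would verify \ombounded{}ness of $(\un^{-1})$ and that $\omulim \un^{-1} = \vom$. The Neumann bound together with $\NormEndVo(\un \vn - \id) \to_\om 0$ forces $\NormEndVo((\un \vn)^{-1})^{\lslanh} \to_\om 1$, so $\NormEndVo(\un^{-1})^{\lslanh} \leq \NormEndVo(\vn)^{\lslanh}\,\NormEndVo((\un \vn)^{-1})^{\lslanh}$ is \ombounded. The identity $\un^{-1} - \vn = \vn\bigl((\un \vn)^{-1} - \id\bigr)$ combined with the Neumann estimate $\NormEndVo((\un \vn)^{-1} - \id) \leq \NormEndVo(\un \vn - \id)/(1 - \NormEndVo(\un \vn - \id))$ then yields $\limom \NormEndVo(\un^{-1} - \vn)^{\lslanh} = 0$, concluding the proof. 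The only mild subtlety is that the Neumann series requires $\KK$ to be complete, which is guaranteed by the standing assumption on $\KK$ from \S\ref{ss- asympt cones} onwards; apart from this the whole argument is a direct computation with the submultiplicative renormalized norm.
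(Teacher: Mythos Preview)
Your proof is correct and follows essentially the same route as the paper's: both argue the converse by taking a representative sequence $(\vn)$ for the inverse, using $\limom \NormEndVo(\un\vn-\id)^{\lslanh}=0$ to get $\NormEndVo(\un\vn-\id)<1$ $\om$-almost everywhere, inverting $\un\vn$ via the Neumann bound, and then writing $\un^{-1}=\vn(\un\vn)^{-1}$ to obtain \ombounded{}ness. The paper concludes $\uom^{-1}=\omulim \un^{-1}$ simply by uniqueness of inverses rather than your explicit estimate, but that is a cosmetic difference. One small correction: no completeness hypothesis on $\KK$ is in force in \S\ref{ss- asympt cones} (the restriction to $\RR$, $\CC$, or ultrametric fields only begins in \S\ref{ss- thm on degenerations of rep}); however, none is needed, since $\V$ is finite-dimensional and one may pass to the completion $\widehat{\KK}$ to run the Neumann series and then observe via Cramer's rule that the inverse already lies in $\End(\V)$ over $\KK$.
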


\begin{proof}
  If  $(\un^{-1})$ is \ombounded{} in $\End(\V)$, 
  then clearly 
  $$(\omulim \un)\circ(\omulim(\un^{-1}))=\omulim (\un\circ\un^{-1})=1$$
  hence $\uom$ is invertible with inverse $\omulim(\un^{-1})$.

  Conversely, suppose that $\uom$ is invertible in $\End(\V)_\om$, and let
  $\uom'=\omulim \un'$ be its inverse.
  Then $1-\uom\circ \uom'=0$ in $\End(\V)_\om$, hence
  $\limom\NormEndVo(\id-\un\circ \un')^\lslanh =0$.
  Then for $\om$-almost all $\n$ we have 
  $\NormEndVo(\id- \un\circ \un')<1$, 
  so $\an=\un\circ \un'$ is invertible in $\GL(\V)$ 
  with 
  $\NormEndVo(\an^{-1})\leq ( 1-\NormEndVo(\id-\an) )^{-1}$.
  Then $\un$ is invertible 
  with inverse $\un^{-1}=\un'\circ\an^{-1}$.
  Since $\NormEndVo(\un^{-1}) 
  \leq \NormEndVo(\un')(1-\NormEndVo(\an-\id))^{-1}$,
  we have
  $\limom\NormEndVo(\un^{-1})^\lslan < \infty$, 
  that is $(\un^{-1})$ is \ombounded.
  Then $\uom'=\omulim \un^{-1}$ by uniqueness of inverses.
\end{proof}

A sequence $(\un)_{\n\in\NN}$ in $\GL(\V)$ will be called 
{\em \ombounded{} (in $\GL(\V)$)} if
 $$\limom\NormEndVo(\un)^\lslanh <\infty 
\text{ and } \limom\NormEndVo(\un^{-1})^\lslanh <\infty \;.$$

A \ombounded{} sequence $(\un)_{\n\in\NN}$ in $\End(\V)$ induces
an endomorphism $\uomup$ of $\Vom$ defined by 
$$ \uomup(\omulim\vVn)=\omulim\un(\vVn)$$
for all \ombounded{} sequence $(\vVn)$ in $\V$.
This endomorphism depends only on the ultralimit $\uom$ in
$\End(\V)_\om$ of the sequence $(\un)$. 
The following results allows us to identify $\End(\V)_\om$ with 
$\End(\Vom)$, and $\GL(\V)_\om$ with $\GL(\Vom)$.
\begin{prop}\cite[Corollaire 3.18]{ParComp}
  \label{prop- End(V)_om isom End(Vom)}
  The map 
  $$
  \begin{array}{rcl}
\End(\V)_\om& \to &\End(\Vom)\\
\uom & \mapsto & \uomup  
  \end{array}
  $$ 
  is an isomorphism of $\KKom$-normed algebras identifying 
  $\GL(\V)_\om$ with $\GL(\Vom)$.
\end{prop}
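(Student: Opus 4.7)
The plan is to prove the proposition in four stages: check that the map is a well-defined morphism of $\KKom$-algebras, verify that it is an isometry, establish bijectivity by working with the canonical basis, and finally deduce the identification of the invertible elements from Proposition \ref{prop- GL(V)_om}.

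First I would verify well-definedness and the algebra structure. If $(\un)$ and $(\un')$ are two $\om$-bounded representatives of the same $\uom\in\End(\V)_\om$, then $\limom \NormEndVo(\un-\un')^\lslanh=0$, and for any $\om$-bounded sequence $(\vVn)$ in $\V$ the inequality $\normVo(\un(\vVn)-\un'(\vVn))\leq \NormEndVo(\un-\un')\normVo(\vVn)$, raised to the power $\lslanh$ and $\om$-limited, shows $\omulim\un(\vVn)=\omulim\un'(\vVn)$ in $\Vom$. Hence $\uomup$ is well defined on $\Vom$. Linearity over $\KKom$ (using sequences in $\KK$ representing scalars) and multiplicativity $(u\circ v)^\om=u^\om\circ v^\om$ follow from the analogous identities between sequences.

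Next I would prove the isometry property: $\NormEndVoom(\uom)=\norm{\uomup}_{\End(\Vom)}$. One direction is a direct estimate: for any $\om$-bounded $\vV\in\Vom$ with $\normVoom(\vV)\leq 1$, representing it by $\vVn$ with $\normVo(\vVn)\leq 1+\eps_\n$, we get $\normVoom(\uomup\vV)\leq\limom(\NormEndVo(\un)\normVo(\vVn))^\lslanh=\NormEndVoom(\uom)$. For the reverse, I exploit that $\V=\KK^\N$ is finite-dimensional: for each $\n$ choose $\vVn\in\V$ with $\normVo(\vVn)\leq 1$ and $\normVo(\un\vVn)\geq(1-\tfrac1{\n})\NormEndVo(\un)$; the ultralimit of $(\vVn)$ lies in the unit ball of $\Vom$ and realises the norm of $\uomup$ up to $\om$-limit. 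This gives the opposite inequality, so the map is a norm-preserving $\KKom$-algebra morphism. Injectivity follows immediately from the isometry.

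For surjectivity I would use the canonical basis $\eVoomb=(\eVoom_\indN)_\indN$ of $\Vom$. Given $U\in\End(\Vom)$, lift each $U(\eVoom_\indN)$ to an $\om$-bounded sequence $(\wV_{\n,\indN})_\n$ in $\V$ and define $\un\in\End(\V)$ by $\un(\eVo_\indN)=\wV_{\n,\indN}$. On the canonical basis the norm $\NormEndVo(\un)$ is comparable (in finite dimension $\N$) to $\max_\indN\normVo(\wV_{\n,\indN})$, so $(\un)$ is $\om$-bounded and defines $\uom\in\End(\V)_\om$; since $\uomup$ and $U$ agree on the canonical basis $\eVoomb$ of the $\KKom$-module $\Vom$, they are equal.

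Finally the identification $\GL(\V)_\om=\GL(\Vom)$ is formal once the above is established: by definition $\GL(\V)_\om$ is the group of invertible elements of $\End(\V)_\om$, and under an isomorphism of unital normed algebras invertible elements correspond to invertible elements, i.e.\ to $\GL(\Vom)$. Proposition \ref{prop- GL(V)_om} then re-expresses this condition at the level of representative sequences $(\un)$. The main obstacle I anticipate is the isometry step, because one must be careful that the operator norm on $\End(\V)$ commutes with the ultralimit construction; the argument relies essentially on the finite-dimensionality of $\V$, since in infinite dimension the supremum defining the operator norm need not be approximated by an $\om$-bounded sequence in the unit ball.
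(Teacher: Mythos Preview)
Your proof is correct. Note that the paper does not actually give its own proof of this proposition: it is quoted from \cite[Corollaire 3.18]{ParComp} and stated without argument, so there is nothing to compare against. Your four-step argument (well-definedness, isometry via near-maximisers in the unit ball, surjectivity via the canonical basis, and the formal identification of units) is the standard route and goes through cleanly; the only point worth tightening is that in the surjectivity step the constant relating $\NormEndVo(\un)$ to $\max_\indN\normVo(\un(\eVo_\indN))$ depends only on the dimension $\N$, hence disappears under the exponent $\lslanh$ in the $\om$-limit.
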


\subsection{Ultralimits of projective spaces}
One verifies easily that 
the ultralimit of any sequence of vector subspaces of $\V$ (of fixed dimension) 
is a vector subspace of $\Vom$ (of same dimension). 
Then any sequence of points in the projective space $\PP\V$ 
has a well defined {\em ultralimit}  in the projective space $\PP\Vom$.
This induces a canonical identification of $\PP\Vom$ 
with the ultralimit of the metric
spaces $(\PP\V, \dPV^\lslanh)$, where $\dPV$ 
is the 
distance on $\PP\V$ induced by the norm $\normVo$.
Let $\po_\indN\in\PP V$, $\indN=0,\ldots,\N$ 
be the {\em canonical projective frame} of $\PP\V$,
which is defined by 
$\po_\indN=\classPV{\eVo_\indN}$ for $\indN=1,\ldots,\N$ and 
$\po_0=\classPV{\eVo_1+\cdots+\eVo_\N}$.
Let $\poom_\indN$ be the ultralimit of the constant sequence $(\po_\indN)_{\n\in\NN}$.
Then $(\poom_\indN)_{\indN=0,\ldots,\N}$ 
is the canonical projective frame of $\PP\Vom$.

\label{s- def gn ombounded in PGL(V)}
A sequence  $(\gn)_\n$ in  $\PGL(V)$  is {\em \ombounded} 
if it has a \ombounded{} lift $(\un)_\n$ in $\GL(\V)$.
Then the ultralimit $\gom\in\PGL(\Vom)$ of $(\gn)$ 
is well defined by 
$$ \gom(\omulim\p_\n)=\omulim\gn(\p_\n)$$
and it coincides with the class in $\PGL(\Vom)$ 
of the ultralimit $\uom\in\GL(\Vom)$ of the sequence $\un$.

Here is a useful criterion to see 
if a sequence  $(\gn)_\n$ in  $\PGL(V)$  is \ombounded, in terms of
the action on the projective space.

\begin{prop}
  \label{prop- gn ombounded iff projective frame}
  Let $(\gn)_{\n\in\NN}$ be a sequence in  $\PGL(\V)$. 
  let $(\qn_\indN)_{0 \leq \indN\leq \N}$ be the image by $\gn$ of
  the canonical projective frame $(\po_\indN)_{0 \leq \indN\leq \N}$.
  Denote by 
  $\qom_\indN$ the ultralimit of the sequence $(\qn_\indN)_{\n\in\NN}$ in $\PP\Vom$.
  %
  %
  The following assertions are equivalent:
  \begin{enumerate}
  \item 
    The points $\qom_\indN$ form a projective frame of $\PP\Vom$ ;
  \item
    The sequence $(\gn)_\n$ is \ombounded{} in $\PGL(\V)$, 
  \end{enumerate}
  Then the ultralimit  of $(\gn)_{\n\in\NN}$ in $\PGL(\Vom)$ 
  is the unique map $\gom\in\PGL(\Vom)$  sending $(\poom_\indN)_{0 \leq \indN\leq \N}$ to $(\qom_\indN)_{0 \leq \indN\leq \N}$.
  \qed
\end{prop}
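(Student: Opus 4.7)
The direction $(2)\Rightarrow (1)$ is immediate from the construction of ultralimits: if $(\gn)$ is $\om$-bounded in $\PGL(\V)$, the ultralimit $\gom$ belongs to $\PGL(\Vom)$ (by Propositions \ref{prop- GL(V)_om} and \ref{prop- End(V)_om isom End(Vom)}) and satisfies $\gom(\omulim\p_\n)=\omulim\gn(\p_\n)$. Applying this to the constant sequences $(\po_\indN)$ gives $\gom(\poom_\indN)=\qom_\indN$, and since $\gom$ is a projective isomorphism of $\PP\Vom$, it sends the canonical projective frame $(\poom_\indN)$ to a projective frame. This also proves the final assertion about $\gom$ (uniqueness being standard).

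The substantive direction is $(1)\Rightarrow (2)$. The plan is to exhibit an $\om$-bounded lift $(\un)_\n$ of $(\gn)_\n$ in $\GL(\V)$ by carefully normalizing vectorial representatives of the image frame. First, choose lifts $\hat{q}^\n_\indN\in\V$ of $\qn_\indN$ for $\indN=0,\ldots,\N$ such that the sequences $(\hat{q}^\n_\indN)$ are $\om$-bounded in $\V$ with $\limom\normVo(\hat{q}^\n_\indN)^\lslanh>0$; their ultralimits $\hat{q}^\om_\indN\in\Vom$ are then nonzero lifts of $\qom_\indN$. Write $\hat{q}^\n_0=\sum_{\indN=1}^\N c^\n_\indN\,\hat{q}^\n_\indN$, so that in the limit $\hat{q}^\om_0=\sum_\indN c^\om_\indN\,\hat{q}^\om_\indN$ with $c^\om_\indN=\omulim c^\n_\indN\in\KKom$. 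By hypothesis (1) the family $(\hat{q}^\om_\indN)_{1\le\indN\le\N}$ is a basis of $\Vom$ and $\hat{q}^\om_0$ is in general position with respect to it, hence each $c^\om_\indN$ is a nonzero, finite element of $\KKom$.

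Now define $\un\in\GL(\V)$ as the unique linear map satisfying $\un(\eVo_\indN)=c^\n_\indN\,\hat{q}^\n_\indN$ for $\indN=1,\ldots,\N$. Then $\un(\eVo_1+\cdots+\eVo_\N)=\hat{q}^\n_0$, so $\un$ indeed projectively lifts $\gn$ (for $\om$-almost all $\n$). Since the scalars $c^\n_\indN$ and the vectors $\hat{q}^\n_\indN$ are all $\om$-bounded, the sequence $(\un)$ is $\om$-bounded in $\End(\V)$, and its class $\uom\in\End(\V)_\om$ corresponds (via Proposition \ref{prop- End(V)_om isom End(Vom)}) to the linear endomorphism of $\Vom$ sending $\eVoom_\indN$ to $c^\om_\indN\,\hat{q}^\om_\indN$. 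Since $c^\om_\indN\neq 0$ and $(\hat{q}^\om_\indN)_{1\le\indN\le\N}$ is a basis, this endomorphism is invertible, i.e.\ $\uom\in\GL(\V)_\om$. Proposition \ref{prop- GL(V)_om} then yields that $(\un^{-1})$ is $\om$-bounded, so $(\un)$ is $\om$-bounded in $\GL(\V)$ and $(\gn)$ is $\om$-bounded in $\PGL(\V)$, as desired.

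The only mildly delicate step is the normalization of the lifts: one must fix the scalar ambiguity in each $\un\in\GL(\V)$ (there being a whole $\KK^*$-family of lifts of $\gn$) so that the resulting sequence is actually $\om$-bounded. This is achieved precisely by forcing $\un(\eVo_1+\cdots+\eVo_\N)=\hat{q}^\n_0$ and using the generic-position hypothesis to ensure none of the coefficients $c^\om_\indN$ degenerates to $0$ or $\infty$, which would otherwise destroy either the $\om$-boundedness of $\un$ or the invertibility of its ultralimit.
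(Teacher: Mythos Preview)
Your argument is correct and takes a somewhat different route from the paper's. The paper first invokes the isomorphism $\GL(\V)_\om\simeq\GL(\Vom)$ to manufacture an \ombounded{} sequence $(\un)$ in $\GL(\V)$ whose ultralimit is a chosen lift $\uom\in\GL(\Vom)$ of $\gom$, and then perturbs each $\un(\eVo_\indN)$ to a nearest vector $\wV^\n_\indN$ on the line $\qn_\indN$, obtaining $\hn$ with $\hn(\eVo_\indN)=\wV^\n_\indN$; one then checks that $\hn$ has ultralimit $\uom$. You instead construct a lift of $\gn$ directly from the coordinates $c^\n_\indN$ of $\hat q^\n_0$ in the basis $(\hat q^\n_\indN)_{1\le\indN\le\N}$, which makes the relation $\un(\eVo_1+\cdots+\eVo_\N)=\hat q^\n_0$ hold on the nose and thereby certifies transparently that your $\un$ really is a lift of $\gn$ --- a point on which the paper's argument is less explicit, since it never brings $\qn_0$ into play. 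Your approach is also more elementary, in that it does not use the surjectivity of $\End(\V)_\om\to\End(\Vom)$ as a black box to produce a sequence; you rightly identify the scalar normalization as the crux.

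One small point to tighten: you assert $c^\om_\indN=\omulim c^\n_\indN\in\KKom$ before checking that $(c^\n_\indN)_\n$ is \ombounded{}. This follows at once: the matrix $M_\n\in\GL(\V)$ with columns $\hat q^\n_1,\ldots,\hat q^\n_\N$ is \ombounded{} in $\End(\V)$, and its ultralimit is invertible because $(\hat q^\om_\indN)_{1\le\indN\le\N}$ is a basis of $\Vom$; Proposition~\ref{prop- GL(V)_om} then gives that $(M_\n^{-1})$ is \ombounded{}, hence so is $c^\n=M_\n^{-1}(\hat q^\n_0)$.
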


\begin{proof}
  Suppose that the points $\qom_\indN$ form a projective frame of
  $\PP\Vom$, 
  and let $\gom$ be the projective map in $\PGL(\Vom)$
  sending the frame $(\pom_\indN)$ to the frame $(\qom_\indN)_\indN$. 
  Let $\uom$ be a lift of $\gom$ in $\GL(\Vom)$. 
  There exists a \ombounded{} sequence $(\un)_{\n\in\NN}$ in $\GL(\V)$ with
  ultralimit $\uom$ (Propositions \ref{prop- End(V)_om isom End(Vom)}
  and  \ref{prop- GL(V)_om}).

  For each fixed $\indN$, 
  let $\vV^\n_\indN$ be the image of $\eVo_\indN$ by $\un$. 
  Then $(\vV^\n_\indN)_\n$ is a \ombounded{} sequence in $\V$ 
  and its ultralimit is $\vV^\om_\indN=\uom(\eVo_\indN)$, which is 
  a non zero vector in $\Vom$ representing 
  the point $\qom_\indN$ of $\PP\Vom$.

  Let $\wV^\n_\indN$ be a vector in $\qn_\indN$ (seen as a line of $\V$) 
  at minimum distance from  $\vV^\n_\indN$. 
  Then $\normVo{(\wV^\n_\indN-\vV^\n_\indN)}^\lslanh
  \leq \dPV(\un(\pn_\indN),\qn_\indN)^\lslanh$.
  Since the sequence  $(\vV^\n_\indN)_\n$ is \ombounded{}, 
  and we have $\limom\dPV(\un(\pn_\indN), \qn_\indN)^\lslanh=0$, 
  it follows that the sequence $(\wV^\n_\indN)_\n$ is \ombounded{} 
  with ultralimit $\wV^\om_\indN=\vV^\om_\indN$.

  Let $\hn\in\GL(\V)$ be the linear map sending
  the canonical basis $(\eVo_\indN)_{\indN=1\ldots\N}$ 
  to the basis $(\wV^\n_\indN)_{\indN=1\ldots\N}$, 
  which is a  lift in $\GL(\V)$ of $\gn$. 
  The sequence $(\hn)_\n$ is \ombounded{} in $\End(\V)$, 
  and
  its ultralimit  in $\End(\Vom)$ is $\uom$ 
  (since it sends $\eVoom_\indN$ to $\vV^\om_\indN$).
  Since $\uom=\omulim\hn$ is invertible in $\End(\Vom)$,
  the sequence $(\hn)^{-1}$ is \ombounded{}, as wanted.
\end{proof}

We recall that the ultralimit   $\rhoom:\Ga\to \PGL(\Vom)$  
of a sequence of representations $\rhon:\Ga \to \PGL(\V)$  
is well defined
when $\rhon$ is {\em \ombounded}, 
that is when for all $\ga\in\Ga$ (or just for a generating set), 
the sequence $(\rhon(\ga))_{\n\in\NN}$ is \ombounded{} in $\PGL(\V)$ 
(see \S\ref{s- def gn ombounded in PGL(V)}). It is then
defined by $\rhoom(\ga)=\omulim \rhon(\ga)$.

The cross ratio is easily seen to behave well under ultralimit.

\begin{prop}
  \label{prop- omulimit of cross ratios}
  For $\n\in\NN$, let $\pn_1$, $\pn_2$, $\pn_3$, $\pn_4$ 
  be four points in $\PP(\V)$ in a common line $\Dn$. 
  Let $\pom_\indQuadP=\omulim \pn_\indQuadP$ be the ultralimit in $\PP(\Vom)$ 
  of the sequence $(\pn_\indQuadP)_{\n\in\NN}$.
%
  Suppose that the quadruple 
  $(\pom_1,\pom_2,\pom_3,\pom_4)$ is \nondegeneratedQ{}, i.e. 
  has no triple point.
  Then
  $(\pn_1,\pn_2,\pn_3,\pn_4)$ is  \nondegeneratedQ{}
  for $\om$-almost all $\n$, and
  \[\Bir(\pom_1,\pom_2,\pom_3,\pom_4)=\omulim \Bir(\pn_1,\pn_2,\pn_3,\pn_4)\]
  in $\KKom\cup\{\infty\}$.
  \qed
\end{prop}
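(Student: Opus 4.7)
The strategy is to write the cross ratio of four collinear projective points as a rational function of $2\times 2$ minors of vector lifts, and to observe that such minors, being bounded multilinear expressions in the coordinates, commute with ultralimits.

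I would begin with the \textbf{nondegeneracy claim}. For each $\indQuadP$, pick a unit vector lift $\vV^\n_\indQuadP\in\V$ of $\pn_\indQuadP$; the sequence $(\vV^\n_\indQuadP)_\n$ is then \ombounded{}, with ultralimit $\vV^\om_\indQuadP\in\Vom$ a unit vector representing $\pom_\indQuadP$. For vectors $v,w\in\V$ and indices $1\leq i<j\leq \N$, write $m_{i,j}(v,w)=v_iw_j-v_jw_i$; two projective points $[v],[w]$ (in $\PP\V$ or $\PP\Vom$) are distinct iff $m_{i,j}(v,w)\neq 0$ for some $(i,j)$. Since minors of unit vectors are bounded by $2$ in absolute value, hence \ombounded, and since the canonical basis of $\Vom$ is the ultralimit of the canonical basis of $\V$, we have $\omulim m_{i,j}(\vV^\n_a,\vV^\n_b)=m_{i,j}(\vV^\om_a,\vV^\om_b)$; so distinctness of $\pom_a$ and $\pom_b$ in $\PP\Vom$ forces distinctness of $\pn_a$ and $\pn_b$ for $\om$-almost all $\n$. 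Since there are only finitely many triples to check, $(\pn_\indQuadP)_\indQuadP$ is \nondegeneratedQ{} for $\om$-almost all $\n$.

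For the \textbf{main identity}, the starting point is the elementary formula
\[
\Bir(\p_1,\p_2,\p_3,\p_4)
=\frac{m_{i,j}(v_1,v_2)\,m_{i,j}(v_3,v_4)}{m_{i,j}(v_1,v_4)\,m_{i,j}(v_2,v_3)},
\]
valid for four collinear points $\p_\indQuadP=[v_\indQuadP]$ on a line $\D\subset\PP\V$ whenever the linear projection of $\D$ onto the $(i,j)$-coordinate plane is an isomorphism; this follows from \eqref{eq- def cross ratio} by reading the standard cross ratio formula in this affine chart. Since $\Dom\subset\Vom$ is two-dimensional, there exist indices $(i,j)$ for which this projection is an isomorphism; for such $(i,j)$, any pair of linearly independent vectors of $\Dom$ has nonzero $m_{i,j}$-minor.

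It remains to \textbf{split into cases} based on the two alternatives of \eqref{eq- nondegeneratedQ}. If $\pom_1\neq\pom_4$ and $\pom_2\neq\pom_3$, fix $(i,j)$ as above; the two denominator minors in the formula for $\Bir(\pom)$ are nonzero, hence their counterparts for the lifts $\vV^\n_\indQuadP$ are nonzero for $\om$-almost all $\n$, and the ultralimit of the ratio formula for $\Bir(\pn)$ equals the ratio formula for $\Bir(\pom)$, since in $\KKom$ the ultralimit commutes with products and with quotients when the denominator ultralimit is nonzero. Otherwise $\pom_1=\pom_4$ or $\pom_2=\pom_3$, so $\Bir(\pom)=\infty$, and nondegeneracy forces $\pom_1\neq\pom_2$ and $\pom_3\neq\pom_4$; fixing $(i,j)$ as above, the numerator minors of the lifts have nonzero ultralimit while at least one denominator minor has ultralimit $0$, yielding $\limom\abs{\Bir(\pn)}^{\lslanh}=+\infty$, i.e. $\omulim\Bir(\pn)=\infty=\Bir(\pom)$. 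The proof is essentially a bookkeeping exercise; the only mild subtlety is choosing one pair $(i,j)$ controlling all four pairs of points simultaneously, which works thanks to the two-dimensionality of $\Dom$.
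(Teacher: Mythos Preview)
Your argument is correct. Note that the paper does not actually supply a proof of this proposition: it is stated with a terminal \qed{} and preceded only by the remark that ``the cross ratio is easily seen to behave well under ultralimit,'' so there is nothing to compare against. Your write-up fills in precisely the routine verification the author omits, and the device of fixing a single coordinate pair $(i,j)$ on which the projection of $\Dom$ is an isomorphism is a clean way to handle all four minors at once.

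One small edge case you glide over: in the branch where $\pom_1=\pom_4$ (say), it may happen that $\pn_1=\pn_4$ for $\om$-almost all $\n$, in which case the minor formula does not literally apply to $\Bir(\pn_1,\pn_2,\pn_3,\pn_4)$; but then $\Bir(\pn_1,\pn_2,\pn_3,\pn_4)=\infty$ by convention and the conclusion is immediate. Since the ultrafilter decides each such dichotomy, a short extra case split disposes of this with no new ideas.
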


\subsection{Asymptotic cones and Fock-Goncharov parameters}
\label{ss- FG-param and asympt cones}
In this section, we show that
FG-parametrization of representations  
behaves well with respect to ultralimits, that is
the two constructions commute. 
We use the hypotheses and notations 
of Section  \ref{s- FG parametrization}, 
from which 
we recall that,
for $\FGp=((\Z_\tau)_\tau,(\Ep_\e)_\e)$ 
in  $(\KKol)^\TrianglesT \times (\KKo)^\orEdgesT$
the  $\T$-\transverse{} map with FG-parameter $\FGp$
is denoted by $\FdevZ:\Farey{\Sf} \to \MaxFlags(\PP)$ 
and $\rhoZ$ denotes the associated representation from $\Ga$ to
$\PGL(\KK^3)$. 
We use the  notations and hypotheses of the previous sections 
for ultralimits and asymptotic cones.

\begin{prop}
  \label{prop- FG coords goes to asymptotic cone}
  Let $(\FGpn)_\n$  be a sequence in
 $(\KKol)^\TrianglesT \times (\KKo)^\orEdgesT$
  and let 
  $\Zn=(\Zn_\tau)_\tau$ and $\Epn=(\Epn_\e)_\e$.
  %
   Denote by  $\Fdevom:\Farey{\Sf}\to \MaxFlags(\KKom^3)$ the ultralimit of
   the sequence of maps $\FdevZn:\Farey{\Sf} \to \MaxFlags(\PP)$.
  For each triangle $\tau$ and oriented edge $\e$ of $\T$,
  denote by  $\Zom_\tau=\omulim \Zn_\tau$ $\Epom_\e=\omulim \Epn_\e$
  the ultralimits in $\KKom \cup\{\infty\}$ of the sequence
  $(\Zn_\tau)_\n$ and $(\Epn_\e)_\n$.
  %
  Suppose that 
  $\Zom_\tau\notin \{\infty, -1,0\}$ for all triangle $\tau$ of $\T$, 
  and 
  $\Epom_\e\notin \{\infty,0\}$ for all oriented edge $\e$ of $\T$.
  %
  Then
  \begin{enumerate}
  \item $\Fdevom=\FdevZom$ ;
    %

  \item 
    The ultralimit    $\rhoom:\Ga\to \PGL_3(\KKom)$ 
    of the sequence of representations $\rhoZn$
    is well defined and 
    $\rhoom=\rhoZom$.
  \end{enumerate}

\end{prop}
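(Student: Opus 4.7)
The plan is to prove (i) by showing that the inductive algebraic construction of $\FdevZ$ given in Proposition \ref{prop- existence unicite Fdev} commutes with $\omulim$, and then to deduce (ii) from Proposition \ref{prop- gn ombounded iff projective frame} combined with the uniqueness of the holonomy attached to a \transverse{} flag map.

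First, both $\FdevZn$ and $\FdevZom$ are normalised by sending the base triangle $\tauo$ to the canonical form $\p_1=[1{:}0{:}0]$, $\p_2=[0{:}1{:}0]$, $\D_1\cap\D_2=[0{:}0{:}1]$, $\p_3=[1{:}1{:}1]$. The ultralimit of this constant frame in $\PP(\Vom)$ is the canonical frame of $\PP(\Vom)$, so $\Fdevom$ and $\FdevZom$ coincide on $\{\o_1,\o_2,\o_3\}$. I then propagate this equality by induction on the combinatorial distance to $\tauo$ in the dual tree of $\Tt$. The two building blocks of section \ref{s- construction of F} -- Lemma \ref{lemm- D_3 from triple ratio} for completing a triangle from a triple ratio, and its edge-companion for determining the next flag across an edge -- are both built from intersections of lines, joins of points, and one cross-ratio inversion. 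Each is a rational manipulation on \nondegeneratedQ{} quadruples, and by Proposition \ref{prop- omulimit of cross ratios} it commutes with $\omulim$ whenever the limiting quadruple is \nondegeneratedQ{}.

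The hypotheses $\Zom_\tau\notin\{\infty,-1,0\}$ and $\Epom_\e\notin\{\infty,0\}$ are tailored so that every triple $\Fdevom(\tau)$ produced along the induction is generic (hence $\Fdevom$ is \transverse{} to $\T$) and every cross ratio appearing in the inductive step is non-degenerate at the limit. Consequently the ultralimit of the inductive recipe is the inductive recipe applied to $\Zom$, $\Epom$, and the uniqueness clause of Proposition \ref{prop- existence unicite Fdev} yields $\Fdevom=\FdevZom$, proving (i). Taking FG-invariants of $\Fdevom$ with respect to $\T$ and using Proposition \ref{prop- omulimit of cross ratios} again shows, as a sanity check, that these invariants are indeed $\Zom$ and $\Epom$.

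For (ii), I apply Proposition \ref{prop- gn ombounded iff projective frame}: the element $\rhon(\ga)$ is the unique projective map sending the canonical frame of $\PP(\KK^3)$, realised as $\FdevZn(\tauo)$, to the generic triple $\FdevZn(\ga\tauo)$. By (i), the ultralimit of the image frame is $\FdevZom(\ga\tauo)$, which is a generic triple of flags because $\Zom_{\ga\tauo}=\Zom_{\tauo}\notin\{\infty,-1,0\}$, and therefore determines a genuine projective frame of $\PP(\Vom)$. Proposition \ref{prop- gn ombounded iff projective frame} then gives that $(\rhon(\ga))_\n$ is \ombounded{} in $\PGL(\V)$ and that $\rhoom(\ga)$ sends the canonical frame of $\PP(\Vom)$ to $\FdevZom(\ga\tauo)$. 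This makes $\Fdevom$ equivariant under $\rhoom$, and the uniqueness of the holonomy associated with $\FdevZom=\Fdevom$ yields $\rhoom=\rhoZom$.

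The expected main obstacle is the bookkeeping in the inductive step establishing (i): one has to verify that the non-degeneracy of each cross ratio at the limit (guaranteed by the hypothesis) propagates cleanly through both construction lemmas, so that the relevant finite-stage quadruples are \nondegeneratedQ{} for $\om$-almost every $\n$ \emph{and} the limit quadruples remain \nondegeneratedQ{} -- exactly the double condition required to invoke Proposition \ref{prop- omulimit of cross ratios}. Once this genericity propagation is checked triangle by triangle along the dual tree, the remainder of the argument is essentially a uniqueness statement.
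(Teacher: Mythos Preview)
Your proposal is correct and follows essentially the same route as the paper: both verify the base triangle is in canonical form at the ultralimit, then propagate along the dual tree of $\Tt$ using Proposition~\ref{prop- omulimit of cross ratios} to pass each construction step (triangle completion and edge crossing) through $\omulim$, with the hypotheses on $\Zom_\tau$ and $\Epom_\e$ ensuring the needed non-degeneracy; part (ii) is then deduced from Proposition~\ref{prop- gn ombounded iff projective frame} exactly as you describe. The paper spells out the two inductive lemmas explicitly rather than appealing to the black-box formulation of the construction lemmas, but the content is the same.
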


\begin{proof}
  %
  %
  Denote $\Fdevn=\FdevZn$.
  Note that, for each $\i\in\Farey{\Sf}$ 
  the ultralimit of the sequence of
  flags $\Fdevn(\i)=\Fn_\i=(\pn_\i,\Dn_\i)$
  is a well-defined flag $\Fom_\i=(\pom_\i,\Dom_\i)$  in $\MaxFlags(\KKom^3)$.
  The ultralimit $\Fdevom:\Farey{\Sf}\to \MaxFlags(\KKom^3)$ of the maps $\Fdevn$ is thus always well defined.
  We first prove that $\Fdevom=\FdevZom$.
  %
  %
  Since the canonical basis
  of $\KKom^3$ is the ultralimit of the canonical basis of $\KK^3$,
  it is clear that the image $(\Fom_1,\Fom_2,\Fom_3)$ 
  of the base triangle $\tauo$ by $\Fdevom$ 
  remains  in canonical form, i.e.
  %
  $\pom_1=[1:0:0]$, $\pom_2=[0:1:0]$,  
  $\Dom_1\cap \Dom_2=[0:0:1]$, $\pom_3=[1:1:1]$ 
  is the canonical projective frame.
  So it is enough to prove
  the two next lemmas, ensuring that  
  $\Fdevom$ is $\Tt$-\transverse{} and of FG-invariant $\Zom$
  by induction on adjacent triangles, 
  following the construction of the map $\FdevZom$ 
  in Section \ref{s- construction of F}.

  \begin{lemm}
    Let   $\tau$ be a marked triangle in $\Tt$
    with ordered vertices $(\i,\j,\k)$ in $\Farey{\Sf}$.
    Suppose that $\Fom_\i, \Fom_\j$ and $\pom_\k$ are in generic position.
    Then the triple of flags $(\Fom_\i, \Fom_\j,\Fom_\k)$ is generic
    and its triple ratio is $\Zom_\tau$.
  \end{lemm}
  %
  %
  %
  %
  %

  %

  \begin{proof}
    Denote by 
    $\pn_\ij$ the point $\Dn_\i\cap\Dn_\j$
    and  by $\pom_\ij=\omulim \pn_\ij$ its ultralimit.
    Denote by $\Dn_\ki$ the line $\pn_\k\pn_\i$,
    by $\Dn_\ki$ the line $\pn_\k\pn_\i$, 
    and  by $\Dom_\ki, \Dom_\kj$ their ultralimits.
    Since $\Fom_\i, \Fom_\j$ and $\pom_\k$ are in generic position,
    the points 
    $\pom_\i$, $\pom_\j$, $\pom_\k$ and $\pom_\ij=\Dom_\i\cap\Dom_\j$ 
    are pairwise distinct and 
    $\pom_\k\pom_\i
    =\Dom_\ki$, 
    $\pom_\k\pom_\i
    =\Dom_\kj$ 
    and 
    $\pom_\k\pom_\ij=\omulim \pn_\k\pn_\ij$
    are three distinct lines.
    We have
    $$\Tri(\Fn_\i,\Fn_\j,\Fn_\k)=
    \Bir(\Dn_\kj, \pn_\k\pn_\ij, \Dn_\ki, \Dn_\k)
    =\Zn_\tau$$
    and by hypothesis $\Zom_\tau=\omulim\Zn_\tau$ is distinct from
    $\infty, 0,-1$.
    %
    Hence by Proposition \ref{prop- omulimit of cross ratios} 
    taking ultralimits, the line $\Dom_\k$ is distinct from the 
    three lines $\Dom_\ki$, $\Dom_\kj$
    and $\pom_\k\pom_\ij$, 
    so the triple of flags $(\Fom_\i, \Fom_\j,\Fom_\k)$ is generic,
    and 
    $\Tri(\Fom_\i, \Fom_\j,\Fom_\k)
    =\Bir(\Dom_\kj ,  \pom_\k\pom_\ij, \Dom_\ki, \Dom_\k)
    =\Zom$.
  \end{proof}

  \begin{lemm}
    Let   $\tau$ be a marked triangle in $\Tt$ with ordered vertices $(\i,\j,\k)$,
    and $\taup=(\k,\l,\j)$ be the adjacent triangle.
    Suppose that the triple of flags 
    $\Fom(\tau)$ is generic. 
    Then $\Fom(\taup)$ is generic and
    $$\Bir(\Dom_\i, \pom_\i \pom_\j, \pom_\i \pom_\k, \pom_\i
    (\Dom_\k\cap\Dom_\l ))
    =\Epom_\e$$
    $$\Bir(\Dom_\k, \pom_\k \pom_\l, \pom_\k \pom_\i, \pom_\k
    (\Dom_\i\cap\Dom_\j) )
    =\Epom_\eb \;.$$
    \qed
  \end{lemm}
  \begin{proof}
    %
    %
    %
    %
    Denote $\pom =\ulim (\Dn_\k\cap\Dn_\l)$. 
    Then $\pom\in \Dom_\k$ and $\pom\in\Dom_\l$.
    Since $\Fom_\k$, $\Fom_\i$ and $\Fom_\j$ are in generic position,
    we have
    $\Dom_\k\cap\Dom_\i =\ulim (\Dn_\k\cap\Dn_\i) $,
    %
    $\Dom_\k \cap (\pom_\i\oplus \pom_\j)= \ulim 
    (\Dn_\k \cap (\pn_\i\oplus \pn_\j)) $
    and these two points are distinct and distinct from $\pom_\k$.
    It follows then from Proposition \ref{prop- omulimit of cross ratios}
    that the cross ratio
    $\Bir(\Dom_\k \cap \Dom_\i, \Dom_\k \cap (\pom_\i\oplus \pom_\j),  \pom_\k, \pom)$
    is the ultralimit of 
    $\Bir(\Dn_\k \cap \Dn_\i, \Dn_\k \cap (\pn_\i\oplus \pn_\j), \pn_\k, \pn_\kl)
    =\Epn_\e$, which is $\Epom_\e$.  
    Since $\Epom_\e\neq 0,\infty$, it follows that the point $\pom$ (which is
    on the line $\Dom_\k$) is
    distinct from the two points $\pom_\k$, $\Dom_\i \cap \Dom_\k$.

    Similarly
    the three lines $\pom_\k\pom_\i$, $\pom_\k\pom_\j$ and 
    $\Dom_\k$
    are paiwise distinct, hence the ultralimit $\Deltom$ 
    of the line $\pn_\k\pn_\l$ satisfies
    $\Bir(\pom_\k \pom_\i, \pom_\k\pom_\j, \Dom_\k, \Deltom)=\Epom_\eb$. 
    The line $\Deltom$ passes through $\pom_\k$ and
    is distinct from the lines   
    $\Dom_\k$ and $\pom_\k\pom_\i$, 
    since $\Epom_\eb\neq 0,\infty$. 
    In particular $\pom_\i\notin \Deltom$, so $\pom_\l\neq \pom_\i$.


    We have  three pairwise distinct lines 
    $\Dom_\i$,$\pom_\i\pom_\k$ and $\pom_\i\pom$, 
    hence the cross ratio  
    $\Bir(\Dom_\i,\pom_\i\pom_\k,\pom_\i\pom, \pom_\i\pom_\l)$
    is the ultralimit of 
    $\Bir(\Dn_\i,\pn_\i\pn_\k,\pn_\i\pn_\kl, \pn_\i\pn_\l)=\Zn_\taup$,
    which is $\Zom_\taup$.
    Since $\Zom_\taup\neq \infty$, we have $\pom_\l \notin \Dom_\i$.
    Since $\Zom_\taup\neq -1$, we have 
    $\pom_\i\pom_\l\neq \pom_\i\pom_\k$, so $\pom_\l \notin \Dom_\k$, in
    particular $\pom_\l\neq \pom$, 
    and $\pom_\l\notin \pom_\i\pom_\k$.
    So $\Fom_\i$,$\Fom_\k$ and $\pom_\l$ are in generic position.
    Since $\pom_\l,\pom\in\Dom_\l$ and  $\pom_\l\neq \pom$, 
    we have $\Dom_\l=\pom_\l\pom$.
    Since $\Zom_\taup\neq 0$, we have 
    $\pom_\i\pom_\l\neq \pom_\i\pom$,
    so the line $\Dom_\l=\pom_\l\pom$ do not contain $\pom_\i$.
    We also have $\pom_\k\notin\Dom_\l$ (since $\pom\neq \pom_\k$) and
    $\Dom_\l$ do not pass through $\Dom_\i\cap\Dom_\k$ (since 
    $\pom\neq \Dom_\i\cap\Dom_\k$).  
    The triple of flags
    $(\Fom_\k,\Fom_\l,\Fom_\i)$
    is then generic and of triple ratio 
    $\Bir(\Dom_\i,\pom_\i\pom_\k,\pom_\i(\Dom_\k\cap\Dom_\l),
    \pom_\i\pom_\l)=\Zom_\taup$ as $\pom=\Dom_\l\cap\Dom_\k$.
  \end{proof}

  We may now conclude the proof of Proposition \ref{prop- FG coords goes to asymptotic cone}.
  Let $\ga\in\Ga$.
  Then $\rhon(\ga)$ sends the canonical projective frame 
  $\Fo_1,\Fo_2,\po_3$
  to the frame  
  $\Fn(\ga\o_1)$, $\Fn(\ga\o_2)$, $\pn(\ga\o_3)$
  whose  ultralimit
  is $\Fom(\ga\o_1)$, $\Fom(\ga\o_2)$, $\pom(\ga\o_3)$,
  which is generic hence a projective frame in $\PP(\KKom^3)$.
  Then by Proposition \ref{prop- gn ombounded iff projective frame}
  $\rhon(\ga)$ is \ombounded{}
  and its ultralimit  $\rhoom(\ga)=\omulim \rhon(\ga)$ 
  sends the canonical frame 
  $\Foom_1,\Foom_2,\poom_3$ to 
  $\Fom(\ga\o_1)$, $\Fom(\ga\o_2)$, $\pom(\ga\o_3)$
  %
  hence the flags $\Fom(\tauo)$
  to $\Fom(\ga\tauo)$ (since they have the same triple ratio).
  So $\rhoom(\ga)=\rhoZom(\ga)$ as wanted.
\end{proof}

\subsection{Main result}
\label{ss- thm on degenerations of rep}
We suppose now that  $\KK$ is either equal to $\RR$ or $\CC$ or
ultrametric.
We are now able to describe
a large family of degenerations 
of representations of $\Ga$ in $\PGL(\KK^3)$
as (length spectra of) $\typeA_2$-complexes  of the form $\CEsgZ_{\gFGp}$.
using degenerations of FG-parameters.

We denote by $\Es$ the $\CAT(0)$ metric space (symmetric
space or Euclidean building)   associated with $\PGL_3(\KK)$.

\begin{theo}
\label{theo- degeneration of reps}
Let $(\Zn,\Epn)_{\n\in\NN}$ be a sequence 
in $\KKol^\TrianglesT\times\KKo^\orEdgesT$.
Let $\rhon: \Ga\to \PGL_3(\KK)$ 
be the representation of FG-parameter
$\FGpn=((\Zn_\tau)_\tau,(\Epn_\e)_\e)$.

Let $\gZn_\tau=\log\abs{\Zn_\tau}$,
$\gZn_\e=\log\abs{\Epn_\e}$ 
and $\gZn=(\gZn_\tau)_\tau$,
$\gEpn=(\gEpn_\e)_\e$.
Consider a sequence of real numbers $\lan\geq 1$ 
going to $\pinfty$, such that 
the sequence $\lslan\gFGpn$ converges to a nonzero $\gFGp$ 
in $\RR^\TrianglesT\times\RR^\orEdgesT$.
Suppose that: 
\begin{mydescription}
\item[(\HypFlatTriangles')] 
For each triangle $\tau$ of $\T$,
$\liminf\lslan \log\abs{\Zn_\tau+1} \geq 0$ ;

\item[(\HypH')] For each oriented edge $\e$  in  $\T$, 
$\liminf\lslan \log\abs{\Epn_\e+1} \geq 0$ ;

\item[(\HypLeftShift)]
$\gFGp$ is \leftshifting{},
 
\item[(\HypEdgeSep)] $\gFGp$ is \edgeseparating{}. 
\end{mydescription}

Let $\CEsgZ$ be the $\typeA_2$-complex of FG-parameter $\gFGp$.
Then the renormalized $\Cc$-length spectrum of $\rhon$ 
converges to the $\Cc$-length spectrum of  $\CEsgZ$:
 for all $\ga\in\Ga$ we have
$$\lslan \ellC(\rhon(\ga)) \to \ellC(\ga,{\CEsgZ})$$ 
in $\Cb$.

In particular, 
the usual Euclidean  length  spectrum 
of $\rhon$ converges
to the Euclidean norm 
of the $\Cc$-length spectrum of $\CEsgZ$:
$$\lim_{\n\to \infty}\lslan \elleuc(\rhon(\ga)) 
=\normeuc{\ellC(\ga,{\CEsgZ})}$$
and the analogous claim  holds for the  Hilbert length:
 $$\lim_{\n\to \infty}\lslan \ellH(\rhon(\ga)) 
 =\normH(\ellC(\ga,\CEsgZ))$$
for all $\ga\in\Ga$.
\end{theo}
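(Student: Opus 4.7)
The strategy is to pass to an asymptotic cone and reduce to Theorem \ref{theo- global Cgeod surface}. Fix a non-principal ultrafilter $\om$ on $\NN$ and work with the scaling sequence $(\lan)_\n$. By \cite{ParComp}, the ultralimit of the rescaled CAT(0) spaces $(\Es,\lslan \deuc)$ is canonically the Euclidean $\typeA_2$-building $\Esom$ associated with $\PGL_3(\KKom)$, and its projective plane at infinity is $\PP(\Vom)$.

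First I will check that the ultralimit representation $\rhoom$ of $\rhon$ is well defined and has FG-parameter with the right asymptotic invariants. Set $\Zom_\tau=\omulim \Zn_\tau$ and $\Epom_\e=\omulim \Epn_\e$ in $\KKom\cup\{\infty\}$. Since $\lslan\gFGpn \to \gFGp$, one has $\absom{\Zom_\tau}=e^{\gZ_\tau}$ and $\absom{\Epom_\e}=e^{\gEp_\e}$, which are finite and nonzero, so $\Zom_\tau, \Epom_\e \in \KKom\setminus\{0,\infty\}$. Hypothesis (\HypFlatTriangles') gives $\absom{\Zom_\tau+1}=\limom \exp(\lslan \log\abs{\Zn_\tau+1})\geq 1$, so $\Zom_\tau\neq -1$; similarly (\HypH') gives $\Epom_\e\neq -1$. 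Proposition \ref{prop- FG coords goes to asymptotic cone} therefore applies: the sequence $\rhon$ is \ombounded{} and its ultralimit is $\rhoom=\rho_{(\Zom,\Epom)}:\Ga\to\PGL_3(\KKom)$.

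Next I will verify the hypotheses of Theorem \ref{theo- global Cgeod surface} for the action of $\rhoom$ on $\Esom$. The geometric FG-invariants of this action are obtained by applying $\log\absom{\cdot}$ to the algebraic invariants of $\rhoom$, so the triangle invariant is $\log\absom{\Zom_\tau}=\gZ_\tau$ and $\gZprime_\tau=-\log\absom{1+\Zom_\tau}\leq 0$, which is exactly (\HypFlatTriangles). Likewise $\log\absom{\Epom_\e}=\gEp_\e$ and $\gEprime_\e=-\log\absom{1+\Epom_\e}\leq 0$, which is (\HypH). Conditions (\HypLeftShift) and (\HypEdgeSep) for the geometric FG-parameter of $\rhoom$ coincide with the same conditions on $\gFGp$, which are assumed. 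Theorem \ref{theo- global Cgeod surface} then produces a $\rhoom$-equivariant globally $\Cc$-isometric embedding $\Psi:\CEstgZ\to \Esom$, and in particular
\[\ellC(\rhoom(\ga))=\ellC(\ga,\CEsgZ) \quad\text{for all } \ga\in\Ga\;.\]

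Finally I will transfer this identity to the convergence statement. It is a standard property of asymptotic cones of CAT(0) spaces with $\Cc$-valued distance that the $\Cc$-translation length is continuous: for any $\ga\in\Ga$, one has $\ellC(\rhoom(\ga))=\omulim \lslan \ellC(\rhon(\ga))$. One way to see this is to note that the $\Cc$-length is attained along a translated regular axis (or more generally a $\Cc$-geodesic line) whose rescaled ultralimit yields such an axis for $\rhoom(\ga)$ in $\Esom$, giving the announced equality of vectors in $\Cb$. Combined with the previous step, this gives $\omulim \lslan \ellC(\rhon(\ga))=\ellC(\ga,\CEsgZ)$ for every non-principal $\om$, hence ordinary convergence $\lslan \ellC(\rhon(\ga))\to \ellC(\ga,\CEsgZ)$ in $\Cb$. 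The statements for the Euclidean and Hilbert lengths follow by continuity of the norms $\normeuc{\cdot}$ and $\normH$ on $\Cb$. The main technical obstacle is the last step: showing that the $\Cc$-translation length, which is defined as an infimum over $\Es$, behaves well under the ultralimit construction; this is not automatic but is handled by exhibiting explicit translated axes (or pieces of the invariant subset $\Y=\Psi(\CEstgZ)$ built above) whose rescaled limits carry the ultralimit isometry $\rhoom(\ga)$ exactly as expected.
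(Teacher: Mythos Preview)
Your proof follows essentially the same route as the paper's: pass to the asymptotic cone via a non-principal ultrafilter, use Proposition~\ref{prop- FG coords goes to asymptotic cone} to identify $\rhoom$ with $\rho_{(\Zom,\Epom)}$, verify the hypotheses of Theorem~\ref{theo- global Cgeod surface} for this ultralimit representation, and then invoke continuity of the $\Cc$-length under asymptotic cones to transfer back. One remark on your final paragraph: the continuity statement $\ellC(\rhoom(\ga))=\omulim \lslan \ellC(\rhon(\ga))$ is a general fact about $\Cc$-lengths and asymptotic cones (the paper cites Theorem~3.21 and Proposition~4.4 of \cite{ParComp}), and your suggestion to use the invariant set $\Y=\Psi(\CEstgZ)$ here is misplaced, since $\Y$ lives in $\Esom$ and says nothing about $\ellC(\rhon(\ga))$ in the original spaces; the axis-based sketch is the right idea, but it should be carried out for $\rhon$ acting on $\Es$, independently of $\CEsgZ$.
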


\begin{rema*}
 The hypotheses
 (\HypFlatTriangles') and (\HypH') are automatic for $\KK=\RR$
  (and more generally for $\KK$ ordered) and  positive
  FG-parameters
(since for positive $\aK\in\KK$ we then have
$\abs{\aK+1}\geq \abs{1}=1$).

\end{rema*}

\begin{proof}[Proof of Theorem \ref{theo- degeneration of reps}]
 The idea of the proof is 
first to pass to the ultralimit in 
an appropriate asymptotic cone associated with
 the scaling sequence $(\lan)_\n$,
 and then to apply Theorem \ref{theo- Cgeod surface} to show that the
 ultralimit representation preserves a $\Cc$-geodesic copy of the 
 $\typeA_2$-complex $\CEsgZ$ in the associated Euclidean building, 
hence has same marked $\Cc$-length spectrum, and to use the continuity
properties of $\Cc$-length spectrum with respect to asymptotic cones of
\cite{ParComp} to conclude.

Let $(\KKom,\absom{\ })$ be 
the asymptotic cone of the valued field $\KK$ 
with respect to the scaling sequence $(\lan)_\n$ 
(see Section \ref{ss- asympt cones}).
We first check that the ultralimits behave well.
For all $\tau\in \TrianglesT$, by hypothesis 
$\limom\lslan \log\abs{\Zn_\tau} =\gZ_\tau < \pinfty$, hence
the ultralimit $\Zom_\tau$ of the sequence $\Zn_\tau$ in $\KKom$ 
is well defined, and  
$\limom\lslan \log\abs{\Zn_\tau} =\gZ_\tau > \minfty$
hence $\Zom_\tau\neq 0$. 
Similarly, the ultralimit $\Epom_\e$ of the sequence $\Epn_\e$ in $\KKom$ 
is well defined and non zero as 
$\absom{\Epom_\e}= \exp{\gEp_\e}$.
For all triangle $\tau$, we also have 
$\absom{\Zom_\tau+1}=\limom \abs{\Zn_\tau+1}^\lslanh \geq 1$,
in particular $\Zom_\tau\neq -1$.
Then by Proposition \ref{prop- FG coords goes to asymptotic cone}
the ultralimit    $\rhoom:\Ga\to \PGL(\KKom^3)$ 
of the sequence of representations $\rhon$ is well defined
 and  is the representation $\rhoZom$ associated with the FG-parameter
 $\FGpom=((\Zom_\tau)_\tau, (\Epom_\e)_\e)$.
The FG-parameter $\FGpom$ clearly satisfies the hypotheses of 
Theorem \ref{theo- Cgeod surface}.
Hence Theorem \ref{theo- Cgeod surface} applies, and $\rhoom$,
acting on the Euclidean building $\Esom$ associated with $\PGL_3(\KKom)$,
preserves an equivariant $\Cc$-geodesically embedded copy of the 
$\typeA_2$-complex $\CEsgZ$, hence the length spectra coincide:
$$\ellC(\rhoom(\ga))=\ellC(\ga,{\CEsgZ})
\mbox{\ \ for all }\ga \mbox{ in }\Ga\;.$$
Now fix $\ga$ in $\Ga$. 
Since $\Esom$ is the asymptotic cone of the metric space $\Es$ for the
rescaling sequence $\lan$ (see Theorem 3.21 of \cite{ParComp})
and by continuity properties of the $\Cc$-length 
with respect to asymptotic cones 
(by Theorem 3.21 and Proposition 4.4 of \cite{ParComp}), 
the sequence  $\lslan \ellC(\rhon(\ga))$
 has ultralimit $\ellC(\rhoom(\ga))$ in $\Cb$.

This proves that  the sequence $\lslan \ellC(\rhon(\ga))$ converges
(in the usual sense)  to
$\ellC(\ga,{\CEsgZ})$ in $\Cb$, 
since every subsequence of the sequence 
$\lslan\ellC(\rhon(\ga))$ 
has $\ellC(\ga,{\CEsgZ})$ as cluster value in $\Cb$.
\end{proof}

We now suppose that $\KK=\RR$ and we apply this result to describe a part of 
the compactification of the moduli space of representations
constructed in \cite{ParComp}.
 We first recall briefly the compactification.
Denote $\G=\PGL_3(\RR)$.
Let $\Xsep(\Ga,\G)=\Hom(\Ga,\G)//\G$ 
be the biggest Hausdorff quotient of $\Hom(\Ga,\G)$ under $\G$, 
which identifies with 
the locally compact subspace of $\Hom(\Ga,\G)/\G$ 
consisting of completely reducible (i.e. semisimple) representations 
 (see Section 5.1 of \cite{ParComp} for more details).
The space $\CGa$ of functions from $\Ga$ to $\Cb$ is
endowed with the product topology, and let $\PCGa$ denote 
the quotient space  of $\CGa-\{0\}$ by $\RR_{>0}$.
In  \cite{ParComp} 
we constructed a metrizable compactification
$\Xsept(\Ga,\G)$ of $\Xsep(\Ga,\G)$, 
with boundary contained in $\PCGa$ and 
endowed with a natural action of the modular group $\Out(\Ga)$, 
with following sequential characterization:
a sequence $[\rhon]_\n$ in $\Xsep(\Ga,\G)$ 
converges in $\Xsept(\Ga,\G)$  
to a  boundary point  $[w]$ in $\PCGa$ 
if and only if the two following conditions are satisfied
\begin{enumerate}
\item $[\rhon]_\n$ eventually gets out of any compact subset of
$\Xsep(\Ga,\G)$ ;
\item $[\ellC\circ \rhon]$ converges to $[w]$ in $\PCGa$.
\end{enumerate}
 (see Section 5.3 of \cite{ParComp} for more details).
Let $\MP=\TrianglesT\cup\orEdgesT$ and
denote by  $\PP^+\RR^\MP$ the space of rays in $\RR^\MP$, 
that is the quotient of $\RR^\MP-\{0\}$ by $\RR_{>0}$, 
which is the standard sphere of dimension
$8\abs{\chi(\Sf)}-1$, 
and let $\PP^+ :\RR^\MP-\{0\}\to \PP^+\RR^\MP$ be the corresponding projection.
The FG-parameters space $\RR^\MP$ is endowed with the 
standard compactification as a closed ball $\widetilde{\RR^\MP}$ with
boundary $\bordinf\RR^\MP= \PP^+\RR^\MP$.

Denote by $\ConeL\subset \RR^\MP$ the subset of \leftshifting{}
$\gFGp$, 
and  by $\ConeLS\subset \ConeL$ 
the subset of \leftshifting{} and \edgeseparating{} $\gFGp$,
which are  non empty open cones.

Since, for all \leftshifting{} $\gFGp$, the $\Cc$-length spectrum
$\ellCCEsgZ:\ga\mapsto \ellC(\ga,{\CEsgZ})$ 
of the $\typeA_2$-complex $\CEsgZ$ is not identically zero (that is differs
from $0\in\CGa$), 
Theorem \ref{theo- degeneration of reps} above implies the following result.

\begin{coro}
\label{coro- bord SPC}
  The FG-parametrization map 
\[ 
\function{\FGmap}{\RR^\MP}{\Xsept(\Ga,\G)}%
{\gFGp}{[\rho_{\exp(\gZ),\exp(\gEp)}]}
\]
extends continuously to the open subset $\PP^+\ConeLS$ of $\bordinf \RR^\MP$
by the restriction of the map 
\[
\begin{array}{cll}
  {\PP^+\ConeL} & \to & {\PCGa}\\
{[\gFGp]} & \mapsto & {[\ellCCEsgZ]}
\;.\end{array}
\]
\vskip -1.5em \qed
\end{coro}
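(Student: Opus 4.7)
The plan is to verify continuity of the extended map at each boundary point $[\gFGp_\infty] \in \PP^+\ConeLS$ by combining Theorem \ref{theo- degeneration of reps} with the sequential characterization of $\Xsept(\Ga,\G)$ recalled just before the corollary. Fix such $[\gFGp_\infty]$, and pick a representative $\gFGp_\infty \in \ConeLS$ of the ray; note that $\ellCCEsgZ_{\gFGp_\infty} \in \CGa$ is nonzero as noted before the statement, so $[\ellCCEsgZ_{\gFGp_\infty}]$ makes sense in $\PCGa$.

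The main case is a sequence $\gFGp_\n \in \RR^\MP$ with $\gFGp_\n \to [\gFGp_\infty]$ in $\widetilde{\RR^\MP}$, i.e. $\norm{\gFGp_\n} \to \infty$ and $\gFGp_\n/\norm{\gFGp_\n}$ converging to the unit vector along $[\gFGp_\infty]$. I would choose a scaling $\lan > 0$ so that $\lslan \gFGp_\n \to \gFGp_\infty$ and then verify the four hypotheses of Theorem \ref{theo- degeneration of reps}: hypotheses (\HypLeftShift) and (\HypEdgeSep) hold by $\gFGp_\infty \in \ConeLS$, while (\HypFlatTriangles') and (\HypH') are automatic because $\Z^\n_\tau = \exp(\gZ^\n_\tau)$ and $\Ep^\n_\e = \exp(\gEp^\n_\e)$ are positive reals, so $\abs{\Z^\n_\tau+1}\geq 1$ and $\abs{\Ep^\n_\e+1}\geq 1$, forcing the required liminfs to be $\geq 0$. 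Theorem \ref{theo- degeneration of reps} then yields $\lslan \ellC(\rho_\n(\ga)) \to \ellCCEsgZ_{\gFGp_\infty}(\ga)$ in $\Cb$ for every $\ga\in\Ga$.

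Now I would read off the two conditions of the sequential characterization. Since $\ellCCEsgZ_{\gFGp_\infty}$ is nonzero, there is $\ga_0\in\Ga$ with $\ellCCEsgZ_{\gFGp_\infty}(\ga_0)\neq 0$; then $\normeuc{\ellC(\rho_\n(\ga_0))} \sim \lan\, \normeuc{\ellCCEsgZ_{\gFGp_\infty}(\ga_0)} \to \infty$, so $[\rho_\n]$ eventually leaves every compact subset of $\Xsep(\Ga,\G)$ (on which length spectra are uniformly bounded). Simultaneously, the pointwise convergence of $\lslan\,\ellC\circ\rho_\n$ to the nonzero function $\ellCCEsgZ_{\gFGp_\infty}$ in $\CGa$ projects to $[\ellC\circ\rho_\n] \to [\ellCCEsgZ_{\gFGp_\infty}]$ in $\PCGa$. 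Both conditions being met, the sequential characterization gives $\FGmap(\gFGp_\n) \to [\ellCCEsgZ_{\gFGp_\infty}]$ in $\Xsept(\Ga,\G)$, as required.

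It remains to check continuity from within the boundary, i.e. that for a sequence $[\gFGp_\n]\in\PP^+\ConeLS$ with $[\gFGp_\n]\to[\gFGp_\infty]$ one has $[\ellCCEsgZ_{\gFGp_\n}] \to [\ellCCEsgZ_{\gFGp_\infty}]$ in $\PCGa$. This follows from the continuity of $\gFGp\mapsto\ellCCEsgZ(\ga)$ on the open cone $\ConeL$ for each fixed $\ga$: the combinatorial pattern of a $\Cc$-geodesic representative of $\ga$ in $\CEsgZ$ is locally constant in $\gFGp$, and on each pattern the $\Cc$-length is an affine function of $\gFGp$ since the cells $\cellCEsgZ^\tau$ and $\cellCEsgZ^\e$ depend piecewise linearly on $\gFGp$. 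The metrizability of $\widetilde{\RR^\MP}$ lets one combine the two cases to handle arbitrary convergent sequences. The principal obstacle is the first case, but that is essentially a direct application of Theorem \ref{theo- degeneration of reps} once the trivial verification of (\HypFlatTriangles') and (\HypH') for positive real parameters is observed.
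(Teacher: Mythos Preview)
Your proposal is correct and follows essentially the same approach as the paper, which treats the corollary as an immediate consequence of Theorem \ref{theo- degeneration of reps} together with the sequential characterization of $\Xsept(\Ga,\G)$ and the observation that $\ellCCEsgZ$ is nonzero. You supply considerably more detail than the paper (which gives no proof beyond the one-sentence remark preceding the statement), correctly noting that (\HypFlatTriangles') and (\HypH') are automatic for positive real parameters; your justification of the boundary-to-boundary continuity via local constancy of geodesic combinatorics is a bit optimistic as stated, but the underlying continuity of $\gFGp\mapsto\ellC(\ga,\CEsgZ)$ on $\ConeL$ is clear from the piecewise-linear dependence of the complex on $\gFGp$.
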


Note that the image by $\FGmap$ of the open cone $\ConeLS$ 
is contained in the space $\SPC(\Sf)$
of convex projective structures on $\Sf$ 
with principal geodesic boundary (see \cite{GoldmanSPC,FoGoSPC}). 

\end{document}